\documentclass[12pt]{amsart}
\usepackage{amsthm,graphicx,mathrsfs,amsmath,mathtools,amssymb}
\usepackage[all]{xy}
\usepackage[symbol]{footmisc}
\usepackage{geometry}
\usepackage{float}
\usepackage{hyperref}

\makeatletter

\def\subsection{\@startsection{subsection}{2}
  \z@{.5\linespacing}{.5\linespacing}
  {\normalfont\bfseries}}
\makeatother
\makeatletter

\renewcommand{\l@subsection}{\@tocline{2}{0pt}{2.5em}{3.5em}{}}
\makeatother
\makeatletter
\newlength{\abstractwidth}
\renewenvironment{abstract}{
  \ifx\maketitle\relax
    \ClassWarningNoLine{amsart}{Abstract should precede \protect\maketitle}
  \fi
  \global\setbox\abstractbox=\vbox\bgroup
  \normalfont
  \vspace*{1.5\baselineskip}
  \begin{center}\textsc{Abstract}\end{center}
  \vspace{0.5\baselineskip}
  \begin{center}\begin{minipage}{\abstractwidth}
  \normalfont\normalsize
  \noindent
}{
  \par\end{minipage}\end{center}
  \egroup
  \ifx\@setabstract\relax \@setabstracta \fi
}
\makeatother
\makeatletter
\renewcommand{\subsubsection}{\@startsection{subsubsection}{3}
  \z@{1.0\linespacing \@plus .7\linespacing}{.5\linespacing}
  {\normalfont\itshape}}
\makeatother
\hypersetup{
    colorlinks=true,
    linkcolor=blue,
    filecolor=magenta,
    urlcolor=cyan,
    citecolor=blue,
}

\newtheorem*{theorem*}{Theorem}
\newtheorem*{proposition*}{Proposition}
\newtheorem{theorem}{Theorem}[section]
\newtheorem{corollary}[theorem]{Corollary}
\newtheorem{lemma}[theorem]{Lemma}
\newtheorem{proposition}[theorem]{Proposition}
\theoremstyle{definition}
\newtheorem{definition}[theorem]{Definition}
\newtheorem{rem}[theorem]{Remark}
\theoremstyle{remark}
\theoremstyle{definition}
\newtheorem{example}[theorem]{Example}
\def\sus{\Sigma}
\def\suse{\mathbf{\Sigma}}
\def\lup{\Omega}
\def\eunder{\mathbf{E}^{\mathbb{R}_+}}
\newcommand\lupe{\mathbf{\Omega}}
\def\quic{{\mathscr C}}
\def\quid{{\mathscr D}}
\def\br{\mathbb{R}}
\def\ray{{\mathbb{R}_+}}
\def\E{\mathbf{E}}
\def\N{\mathbb{N}}
\def\Z{\mathbb{Z}}
\newcommand{\adj}{{\scriptscriptstyle\vee}}
\def\im{{\rm Im\,}}
\def\bsph{\mathcal{S}}
\def\bsphc{\mathcal{S}_\ray}
\def\bdisk{\mathcal{D}}
\def\bcil{\mathcal{C}}
\newcommand{\spec}{{{\bf Sp}}}
\newcommand{\spece}{{{\bf Sp}_{\E_*}}}
\newcommand{\ext}{\mathcal{E}}
\newcommand{\eno}{\mathbf{E}}
\newcommand{\epun}{{\mathbf{E}_*}}
\newcommand{\cel}{\operatorname{\text{\rm cell}}}
\newcommand{\rlp}{\operatorname{\text{\rm rlp}}}
\newcommand{\cof}{\operatorname{\text{\rm cof}}}
\newcommand{\calj}{{\mathcal{J}}}
\newcommand{\calk}{{\mathcal{K}}}
\newcommand{\cali}{{\mathcal{I}}}

\newcommand{\calit}{{\mathcal{I}}}

\newcommand{\calf}{{\mathcal{F}}}
\newcommand{\calh}{{\mathfrak{h}}}
\newcommand{\calhg}{{\mathcal{H}}}
\newcommand{\edelta}{\boldsymbol{\Delta}}
\newcommand{\calhr}{\tilde{\mathfrak{h}}}
\newcommand{\hre}{\tilde{h}}
\newcommand{\cofi}{{\mathbf{C}}}
\newcommand{\cw}{{\mathbf{CW}}}
\newcommand{\topo}{\operatorname{{\bf Top
}}}
\newcommand{\topopair}{\operatorname{{\bf Top
^{\mathrm{pair}}}}}
\newcommand{\cyl}{\operatorname{\text{\rm Cyl}\,}}
\newcommand{\Hom}{\operatorname{\text{\rm Hom}}}
\newcommand{\catss}{\operatorname{{\bf sset}}}

\newcommand{\cell}{\operatorname{{\rm cell}}}
\newcommand{\catab}{\operatorname{{\bf Ab}}}
\newcommand{\map}{\operatorname{{\rm map}}}
\newcommand{\Map}{\operatorname{{\rm Map}}}
\newcommand{\id}{\operatorname{{\rm id}}}
\newcommand{\eva}{\operatorname{{\rm ev}}}
\newcommand{\PP}{\mathbf{P}}
\newcommand{\timess}{\bar \times}
\newcommand{\Ho}{\operatorname{{\rm Ho}}}

\begin{document}

\title[Proper and exterior stable homotopy and cohomology theories]{A stable framework for proper and exterior homotopy and cohomology theories}

\author{Antonio Ceres, Jos\'e~M. Garc{\'\i}a-Calcines and Aniceto Murillo }

\begin{abstract}
We develop a stable framework for exterior homotopy theory and apply it to proper homotopy theory. Starting from a known simplicial model structure on the category of exterior spaces we then pass to the proper, cellular model category of retractive exterior spaces over $\ray$, and establish the associated stable homotopy theory of exterior spectra.

\smallskip

The resulting framework provides exterior analogues of several classical constructions in stable homotopy theory and sheds new light on proper homotopy theory, where the development of a satisfactory stable theory has been hindered by the lack of suitable categorical properties. As an application, we establish a Brown representability theorem for reduced exterior cohomology theories. This yields representability results for proper cohomology theories arising from the exterior setting, including cohomology theories with compact supports associated with classical generalized cohomology theories.
\end{abstract}

\maketitle

\tableofcontents

\section*{Introduction}
One of the central insights of modern algebraic topology is that many homotopical phenomena become more accessible after stabilization. The passage from spaces to spectra reveals structures that remain invisible at the unstable level. 
The aim of this paper is to develop such a stable framework in the setting of exterior spaces and consequently in proper homotopy theory. As in the classical setting, we also seek to represent exterior cohomology theories by spectra in the exterior context, including those arising as cohomological invariants in the proper setting.

Exterior homotopy theory constitutes a convenient environment for studying homotopy theory together with its behaviour at infinity. An exterior space is a topological space equipped with an externology, that is, a family of open subsets regarded as neighbourhoods of infinity. This additional structure makes it possible to encode asymptotic information while retaining much of the flexibility of ordinary homotopy theory.

The relationship between exterior spaces and proper topology is well known. Every proper space determines an exterior space via its cocompact externology, yielding a fully faithful embedding of the proper category $\PP$ into $\E$. Many constructions of proper homotopy theory then admit a more natural formulation in the exterior setting. Indeed, one of the main advantages of this approach is that, whereas the proper category lacks many of the functorial constructions that are essential in homotopy-theoretic arguments, the category $\E$ is complete and cocomplete. In particular, exterior homotopy groups, Brown--Grossman invariants and other algebraic tools can be developed in a natural and functorial way within $\E$, providing effective invariants for the study of non-compact spaces.

Despite these developments, a stable homotopy theory of exterior spaces has not been available. The reason is that stabilization requires considerably more than the existence of a homotopy category. In order to construct spectra, see \cite{ho1}, one needs a pointed, left proper and either combinatorial or cellular model category together with a pair of Quillen endofunctors playing the role, in the homotopy category, of suspension and loop functors.

The starting point of the present work is the analysis, under a modern approach, of the simplicial model structure on the category $\E$ introduced in \cite{calpiher}: let $\bsph^n$, $\bdisk^n$ and $\bcil^n$ denote the  Brown--Grossman  or exterior $n$-sphere, $n$-disk and $n$-cylinder introduced in
\cite{brown,por}, see Section~\ref{prelimi2} for the relevant definitions and notation. The {\em exterior model category} is the model structure on $\E$, cofibrantly generated by the  sets of maps
$
\cali=\{\bsph^{n-1}\to\bdisk^n\}_{n\ge0}$,
 $
\calj=\{\bdisk^n\to\bcil^n\}_{n\ge0}$,
and whose weak equivalences are the exterior maps inducing bijections on exterior homotopy classes of discrete rays and isomorphisms on all
exterior homotopy groups over any given discrete ray.

This model structure is cellular and right proper but it fails to be left proper and pointed. This is overcome by considering the retractive category $\epun$ under and over $\ray$ for which we prove (see Proposicion~\ref{oju} and Theorem~\ref{leftpro}):

\begin{theorem*} The exterior model structure induces in $\epun$ a simplicial cofibrantly generated, cellular, proper  model structure.
\end{theorem*}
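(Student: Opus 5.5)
The plan has two parts. First I transfer the structure from $\E$ to the retractive category $\epun$. Then I check left properness by hand, since that is the one property the slice construction does not hand us for free.

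\emph{Transferring the structure.} Write $\epun=(\ray\downarrow\E\downarrow\ray)$ for the category of exterior spaces $X$ with maps $\ray\to X\to\ray$ composing to the identity. Hirschhorn's theorem on over/under categories gives it a model structure. A map is a weak equivalence, fibration or cofibration exactly when its underlying map in $\E$ is one. The structure is cofibrantly generated by the sets
\[
\cali_{\ray}=\{(\bsph^{n-1}\sqcup\ray\to\bdisk^n\sqcup\ray)\}_{n\ge0},\qquad
\calj_{\ray}=\{(\bdisk^{n}\sqcup\ray\to\bcil^n\sqcup\ray)\}_{n\ge0},
\]
with every possible structure map to $\ray$. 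Equivalently, these are the images of $\cali$ and $\calj$ under the left adjoint $(-)\sqcup\ray$ from the over-category $\E/\ray$.

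Cellularity also passes to slice categories by Hirschhorn's results. It needs the domains and codomains of $\cali_\ray$ to be compact, the domains of $\calj_\ray$ to be small relative to $\cali_\ray$-cells, and cofibrations to be effective monomorphisms. Each of these reduces to the corresponding property in $\E$, which is cellular as stated. Right properness is inherited directly: pullbacks in $\epun$ are computed in $\E$, and $\E$ is right proper.

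For the simplicial enrichment, the tensor of $X$ with a simplicial set $K$ is the fibrewise construction: the pushout of $\ray\leftarrow \ray\times|K|\to X\times|K|$, formed over $\ray$. The cotensor is the fibrewise mapping space over $\ray$. Axiom SM7 then follows from SM7 in $\E$ by a standard adjunction argument, as in the parametrized-spaces setting of May and Sigurdsson.

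\emph{Left properness.} This is the main obstacle, because $\E$ itself is not left proper. Consider a pushout in $\epun$ of a weak equivalence $f\colon A\to B$ along a cofibration $i\colon A\to X$. I would first reduce to the case where $i$ is a relative $\cali_\ray$-cell complex; this is allowed since every cofibration is a retract of one. The key step is then that the cells are glued relative to the base $\ray$ along the canonical section. In every object of $\epun$, the rays that determine exterior homotopy groups can be moved into the section up to exterior homotopy. As a result, attaching $\bdisk^n$ along $\bsph^{n-1}$ preserves weak equivalences, by an exterior version of the gluing lemma. I would prove that version with a Mayer--Vietoris/excision argument for exterior homotopy groups over a fixed ray, using the exterior neighbourhoods of $\ray$ in the cell.

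The cofibrant objects of $\epun$ are well pointed over $\ray$, with $\ray\to X$ an exterior closed cofibration. The strategy is therefore the classical one: show that pushouts along closed exterior cofibrations between objects whose section is a cofibration preserve weak equivalences. This is the same way left properness is obtained for well-pointed spaces. The statement then extends to transfinite compositions by compactness of the spheres $\bsph^{n}$ and of the cylinders in $\calj$.

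I expect the gluing lemma to be the delicate point. Specifically, the difficulty is controlling the bijection on exterior homotopy classes of discrete rays after the pushout. I would try first to show that every discrete ray in the pushout is exterior homotopic to one in the image of $B$. This should follow because a cell has only compact "non-$\ray$" part relative to its externology.
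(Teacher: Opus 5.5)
The transfer part of your proposal matches the paper: cofibrant generation, cellularity and right properness via Hirschhorn's slice/coslice results, and the simplicial structure inherited from $\E$. The gap is in left properness.

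\textbf{The key step is false.} You claim that in every object of $\epun$ the relevant discrete rays can be moved into the section up to exterior homotopy, or at least into the image of $B$. This fails already for $0$-cells. Attaching a $0$-cell turns $f\colon A\to B$ into $f\sqcup\id\colon A\sqcup\N\to B\sqcup\N$. A discrete ray with even one value in the new summand $\N$ is not exteriorly homotopic into $B$. The reason is that a homotopy $\N\timess I\to B\sqcup\N$ keeps each connected $\{m\}\times I$ inside one summand, exactly as in Remark~\ref{noleft}. Weak equivalences must be tested at all discrete rays, so these rays cannot be discarded. This $0$-cell situation is precisely the one that breaks left properness in $\E$.

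\textbf{The reduction to well-pointed objects does not help.} Left properness concerns arbitrary $A$ and $B$. For cofibrant $A$ and $B$ the conclusion holds in every model category by Reedy's lemma, so that case proves nothing new.

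\textbf{The missing idea is padding.} In the paper the section is used as follows. Take a map of pairs $(D^k,S^{k-1})\to(Y,X)$, place it on $\{0\}\times D^k$, and use the constant maps at the points of the discrete ray on the other copies. This gives an exterior map $(\bdisk^k,\bsph^{k-1})\to(Y,X)$. Restricting an exterior compression back to $\{0\}\times D^k$ shows that every weak equivalence in $\epun$ is also a classical weak equivalence.

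\textbf{The single-cell case is then a finite/infinite dichotomy.} Decompose the test map into pieces. The test map is a map $\bsph^k\to Y$, a map of pairs $(\bdisk^k,\bsph^{k-1})\to(Y,X)$, or a homotopy. For $0$-cells the pieces are components. For $n$-cells they come from a finite subdivision of each disk, each piece landing in $Y\setminus C$ or in the open cells, where $C$ is the set of centres.
\begin{itemize}
\item If infinitely many pieces land in the part to be compressed, use that $f$ (respectively $X\setminus C\to Y\setminus C$) is an exterior weak equivalence.
\item If only finitely many do, argue with ordinary homotopy groups, which padding makes available.
\end{itemize}

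The ray in Remark~\ref{noleft} that meets $*$ once is exactly a ``finitely many pieces'' configuration. There $\varnothing\to *$ is an exterior weak equivalence but not a classical one. Without the padding step, your excision argument ``over a fixed ray'' has no way to handle such configurations.
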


Furthermore, we exlicitly describe how the tensor and cotensor constructions descend naturally to this pointed context. Most importantly, the resulting model structures do not alter the underlying notion of exterior homotopy. Indeed, the homotopy relation induced by the cylinder objects associated with the simplicial structure agrees with the classical notion of exterior homotopy under $\ray$. More precisely (see Proposition~\ref{rectificacion}):

\begin{proposition*} 
Let $X,Y\in \E_*$, with $X$ cofibrant and $Y$ fibrant. Then the natural map
$
 [X,Y]_{\E_*}\stackrel{\cong}{\longrightarrow} [X,Y]^\ray
$ is a bijection. 
\end{proposition*}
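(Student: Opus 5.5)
Since $X$ is cofibrant and $Y$ fibrant, $[X,Y]_{\E_*}$ can be computed as left homotopy classes with respect to any fixed cylinder object for $X$. The plan is therefore to find a cylinder for $X$ in the model structure on $\epun$ whose homotopies are literally exterior homotopies under and over $\ray$. The natural map in the statement sends a class of maps $X\to Y$ in $\epun$ to its class under the exterior homotopy relation relative to $\ray$. It is well defined and surjective as soon as we know two things: every exterior homotopy under $\ray$ between maps $X\to Y$ in $\epun$ is a model-categorical homotopy, and conversely. Both maps have the same underlying set of representatives, namely morphisms $X\to Y$ in $\epun$, so surjectivity is automatic and everything reduces to comparing the two equivalence relations.

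\textbf{Step 1: the simplicial cylinder.} I would use the simplicial tensor $X\otimes\Delta[1]$ in $\epun$, which the paper says descends from $\E$. Concretely it is the pushout of $X\timess I \leftarrow \ray\timess I\to\ray$, where $\timess$ denotes the exterior product with the interval $I$ carrying the trivial externology, and the right-hand map is the projection. Since $\epun$ is simplicial and $X$ is cofibrant, SM7 shows that $X\vee X\to X\otimes\Delta[1]\to X$ is a good cylinder object. Hence for $X$ cofibrant and $Y$ fibrant, left homotopy can be detected by maps $X\otimes\Delta[1]\to Y$ in $\epun$.

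\textbf{Step 2: identification with exterior homotopies.} By the universal property of the pushout, a map $X\otimes\Delta[1]\to Y$ in $\epun$ is the same thing as an exterior map $H\colon X\timess I\to Y$ satisfying two conditions:
\begin{itemize}
\item $H(s(t),u)=s_Y(t)$ for all $t$ and $u$, i.e.\ $H$ is stationary on the section;
\item $r_Y\circ H=r_X\circ \mathrm{pr}$, i.e.\ $H$ is compatible with the retractions.
\end{itemize}
These are exactly exterior homotopies under (and over) $\ray$. The key point to verify is that the externology on the pushout $X\otimes\Delta[1]$ agrees with the externology used in defining $[X,Y]^\ray$. In particular one must check that the cylinder in $\E$ is $X\timess I$, with externology generated by sets of the form $E\times I$ and not the full product externology. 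I expect this to follow from the description of the tensor with finite simplicial sets given earlier via $\bcil^n$ and the exterior product.

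\textbf{Step 3: equivalence relations.} Left homotopy with respect to a fixed good cylinder is an equivalence relation once $X$ is cofibrant, and it coincides with the homotopy relation because $Y$ is fibrant. Together with Step 2, this shows that two maps are identified in $[X,Y]_{\E_*}$ if and only if they are exterior homotopic relative to $\ray$. This gives injectivity, and hence the bijection.

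The main obstacle is Step 2, specifically checking that the colimit topology and externology of the pushout match the concrete homotopy notion. The delicate part is the quotient collapsing $\ray\timess I$ to $\ray$: one has to confirm that the quotient externology still lets a homotopy $H$ that is stationary on the section factor as an exterior map, and conversely. If this comparison only holds up to a natural homeomorphism, that is still enough.
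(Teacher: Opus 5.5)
\textbf{There is a genuine gap.} Steps 1--3 are correct as far as they go. $X\otimes_\ray\Delta[1]$ is a good cylinder, and maps $X\otimes_\ray\Delta[1]\to Y$ in $\E_*$ are exactly the exterior homotopies $X\timess I\to Y$ that are stationary on the section and commute with the retractions. Your worry about the externology is unnecessary, since maps out of the pushout are governed by its universal property in $\E$. This identifies $[X,Y]_{\E_*}$ with morphisms of $\E_*$ modulo homotopies under \emph{and over} $\ray$, which gives well-definedness.

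The gap is in how you read the target. $[X,Y]^\ray$ is the homotopy set of $\E^\ray=\ray{\downarrow}\E$. Its representatives are \emph{all} exterior maps $f\colon X\to Y$ under $\ray$, with no requirement $r_Yf=r_X$. Its relation is exterior homotopy through maps under $\ray$, with no condition over $\ray$. Hence ``surjectivity is automatic'' is false. In Step 3, the passage between ``homotopic under and over $\ray$'' and ``homotopic relative to $\ray$'' holds only in the trivial direction. Injectivity needs the converse: a homotopy under $\ray$ between two morphisms of $\E_*$ must be replaced by one that is also over $\ray$. This is the real content of the statement, and it is how the result is used later. In the Whitehead theorem, a homotopy inverse under $\ray$ must be replaced by a morphism of $\E_*$. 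In the identification $[\bsphc^n,Z]^\ray\cong[\bsphc^n,Z]_{\E_*}$ for fibrant $Z$, the left side involves maps only under $\ray$. Note also that your argument never uses the hypotheses on $X$ and $Y$ beyond generic model-category facts.

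\textbf{What is missing} is a rectification by lifting against $r_Y\colon Y\to\ray$, which is a fibration in $\E$ because $Y$ is fibrant. The lifts are taken along pushout-products of $s_X\colon\ray\to X$, which is a cofibration because $X$ is cofibrant.

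For surjectivity, let $f$ be a map under $\ray$. The straight-line homotopy $K(x,t)=(1-t)\,r_Yf(x)+t\,r_X(x)$ runs from $r_Yf$ to $r_X$ relative to $s_X(\ray)$; it is exterior because $K(x,t)\ge\min\{r_Yf(x),r_X(x)\}$. The inclusion $X\timess\{0\}\cup\ray\timess I\hookrightarrow X\timess I$ is the pushout-product of $s_X$ with $\Delta[0]\hookrightarrow\Delta[1]$, hence a trivial cofibration. So there is a lift $X\timess I\to Y$ of $K$ extending $f$ and $s_Y$. Its end at $t=1$ is a morphism $\widetilde f$ of $\E_*$, homotopic to $f$ under $\ray$.

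For injectivity, let $f,g\in\Hom_{\E_*}(X,Y)$ be joined by a homotopy $H$ under $\ray$. Straighten $r_YH$ to $r_X\circ\mathrm{pr}$ by a straight-line homotopy relative to $X\times\{0,1\}\cup\ray\times I$. Then lift against $r_Y$ along the pushout-product of $s_X$ with the anodyne extension $(\Delta[1]\times 0)\cup(\partial\Delta[1]\times\Delta[1])\hookrightarrow\Delta[1]\times\Delta[1]$. The initial data are $H$, $f$, $g$ and $s_Y$ on the respective faces. The opposite face of the lift is a homotopy from $f$ to $g$ both under and over $\ray$, to which your Step 2 then applies.
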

 
 As an immediate consequence the corresponding {\em exterior Whitehead theorem} is readily deduced (see Theorem \ref{white}):
 
  \begin{theorem*}
A map in $\epun$ between fibrant and cofibrant objects  is a weak equivalence if and only if it is an
exterior homotopy equivalence under $\ray$.
\end{theorem*}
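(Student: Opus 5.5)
This is the standard model-categorical Whitehead theorem, read through the Proposition above. I would combine the two and not use exterior homotopy groups at all.

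First I would recall the abstract Whitehead theorem in any model category (Hovey, Prop.~1.2.8; Hirschhorn, Thm.~7.8.5). A map $f\colon X\to Y$ between fibrant and cofibrant objects is a weak equivalence if and only if it is a homotopy equivalence. Homotopy here is the left (equivalently right) homotopy relation, which on such objects is an equivalence relation and is computed by any cylinder object. In $\epun$ this applies verbatim, since by Proposition~\ref{oju} and Theorem~\ref{leftpro} it is a model category. So $f$ is a weak equivalence if and only if there is $g\colon Y\to X$ in $\epun$ with $gf\simeq \id_X$ and $fg\simeq\id_Y$ in the model-categorical sense. Equivalently, $[g]$ inverts $[f]$ in $[\,\cdot,\cdot\,]_{\E_*}$.

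Next I would translate this into exterior homotopy under $\ray$ using Proposition~\ref{rectificacion}. Since $X$ and $Y$ are both fibrant and cofibrant, each of $[X,X]_{\E_*}$, $[X,Y]_{\E_*}$, $[Y,X]_{\E_*}$ and $[Y,Y]_{\E_*}$ maps bijectively onto its counterpart $[\,\cdot,\cdot\,]^\ray$. The natural map is the identity on representatives, because both relations are defined on the same set of morphisms of $\epun$. So the proposition says precisely that two maps between such objects are model-homotopic if and only if they are exterior homotopic under $\ray$. Applying this to the pairs $(gf,\id_X)$ and $(fg,\id_Y)$ gives the equivalence in both directions: $g$ is a model homotopy inverse if and only if it is an exterior homotopy inverse under $\ray$.

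The one point needing care is what the statement means by ``exterior homotopy equivalence under $\ray$''. The inverse $g$ and the homotopies must be maps of $\epun$, that is, over $\ray$ as well, or at least define classes in $[\,\cdot,\cdot\,]^\ray$ as used in Proposition~\ref{rectificacion}. I take the definition to match the morphisms used in that proposition. If instead the definition only asks for maps under $\ray$, I would use that $[X,Y]^\ray$ is by definition taken in the same setting as the proposition, so the bijection still transports inverses. The converse direction then follows by the same argument read backwards, so the proof reduces to a short formal argument.
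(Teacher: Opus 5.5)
Your proposal is correct and is essentially the paper's own argument: the abstract Whitehead theorem in $\E_*$ combined with Proposition~\ref{rectificacion}, applied in both directions. One correction: $[X,Y]^\ray$ is built from exterior maps and homotopies that are merely \emph{under} $\ray$ (this is the intended meaning of the statement), so the natural map is not ``the identity on a common set of representatives''. For the converse you therefore genuinely need surjectivity to replace the inverse $g$ by some $g'\in\Hom_{\E_*}(Y,X)$ with $g'\simeq g$ under $\ray$, and then injectivity on $[X,X]$ and $[Y,Y]$. This is exactly what your fallback clause amounts to, and it is how the paper argues.
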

 
Before proceeding to stabilization, see Section \ref{cofibrantes}, it is also important to understand the class of cofibrant objects associated with the model structures introduced above. We investigate the class of these objects in both $\E$ and $\E_*$ and show that the resulting notion of cofibrancy is sufficiently broad to encompass many of the geometric examples arising naturally in exterior and proper homotopy theory. In particular, locally finite CW-complexes fit naturally into the framework developed here.

 Now, the existence of the suspension--loop Quillen pair (Corollary~\ref{adjuncion})
$$ \xymatrix{ \E_*& \E_*\,, \ar@<0.75ex>[l]^(.50){\sus} \ar@<0.75ex>[l];[]^(.50){\lup}\\} $$
which is easily deduced from general properties of the pointed tensor and cotensor functors on $\E_*$, places us in the appropriate setting for stabilization. From this point onward, the general machinery of spectra in model categories becomes available. Following this  approach we define the category $\spece$ of exterior (sequential) spectra and equip them with the corresponding projective and stable model structures. 
As in classical stable homotopy theory, stable equivalences admit an intrinsic homotopical characterization. We show that they are detected by stable exterior homotopy groups  providing computable invariants for the resulting stable category. More precisely, given an exterior spectrum  we introduce, see Definition \ref{stableq},  the $k$-th {\em exterior stable homotopy group} of $X$,
$$
\pi_k^{s,e}(X)
=
\varinjlim_n
\pi_{k+n}^{e}(X_n),
$$
and prove, see Theorem \ref{stableequivstablegroups}:

\begin{theorem*} A map of exterior spectra is a stable equivalence if and only if it induces isomorphisms on exterior stable homotopy groups.
\end{theorem*}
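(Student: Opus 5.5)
The plan is to follow the classical argument for sequential spectra, as in Hovey's treatment of spectra in general model categories, adapted to $\E_*$. By the general theory, a map $f\colon X\to Y$ in $\spece$ is a stable equivalence if and only if $Qf$ is a levelwise weak equivalence. Here $Q$ is an $\Omega$-spectrum replacement: first replace fibrantly in the projective structure, then set
$$(QX)_n=\varinjlim_k \Omega^k X_{n+k}.$$
The exterior stable homotopy groups will be computed from this replacement.

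\textbf{Homotopy groups of an $\Omega$-spectrum.} First we check that for a levelwise fibrant spectrum the exterior homotopy groups of $\Omega X_{n+1}$ are shifted copies of those of $X_{n+1}$. Precisely, $\pi^e_{k}(\Omega Z)\cong \pi^e_{k+1}(Z)$ for fibrant $Z\in\E_*$. This should follow from the adjunction $\sus\dashv\lup$ of Corollary~\ref{adjuncion}, Proposition~\ref{rectificacion}, and the identification $\sus \bsph^{k}\simeq \bsph^{k+1}$ under $\ray$, interpreting the pointed exterior spheres relative to the base ray. Consequently, for an $\Omega$-spectrum $Z$ the colimit defining $\pi^{s,e}_k(Z)$ stabilizes, so $\pi^{s,e}_k(Z)\cong \pi^e_{k+n}(Z_n)$ for each $n$, at least for $k+n\ge 0$.

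\textbf{Levelwise equivalences between $\Omega$-spectra.} A map of $\Omega$-spectra is a levelwise weak equivalence iff it induces isomorphisms on all $\pi^e_{k+n}$ of all levels. Using the previous step, this happens iff it induces isomorphisms on $\pi^{s,e}_*$. Two points need attention here. The weak equivalences in $\E$ also involve bijections on exterior homotopy classes of discrete rays and on homotopy groups over every discrete ray, not just over the base ray. I would handle this by using the loop structure: each $X_n\simeq\Omega X_{n+1}$ is an exterior H-group over $\ray$. This should force components to be detected by $\pi_0$-type groups over the base ray, and make groups over other rays isomorphic to those over the base ray, so base-ray groups suffice. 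If that fails, I would instead define $\pi^e$ to include all discrete rays, as Definition~\ref{stableq} presumably does.

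\textbf{Comparison of $X$ and $QX$.} Finally, the natural map $X\to QX$ induces an isomorphism on $\pi^{s,e}_*$. Exterior spheres are compact relative to sequential colimits of cofibrations, so $\pi^e_{k+n}$ commutes with the colimit defining $QX$. Then $\pi^{s,e}_k(QX)$ is a double colimit that reduces to $\pi^{s,e}_k(X)$ by a cofinality argument. Combining this with two-out-of-three for the square formed by $f$ and $Qf$ gives the theorem.

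\textbf{Main obstacle.} The hard step is the compactness. I need the exterior spheres to be small relative to the maps $\Omega^k X_{n+k}\to\Omega^{k+1}X_{n+k+1}$, which are not a priori cofibrations. I would first use cellularity of $\E_*$ (Theorem~\ref{leftpro}) to factor these maps as cofibrations, that is, replace by a mapping telescope. I would then argue that maps from $\bsph^n$ and $\bcil^n$ into the telescope factor through a finite stage, using the compactness of their cells in the cellular structure.
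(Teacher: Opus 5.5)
\textbf{There is a genuine gap: your argument rests on a spectrification step that is neither justified nor needed, and the one step that carries the paper's proof is left as a hope.}

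\textbf{The $Q$-step.} The load-bearing claim is that $f$ is a stable equivalence iff $Qf$ is a levelwise weak equivalence, with $(QX)_n=\varinjlim_k\lup^kX_{n+k}$. This is not available ``by the general theory'' here. Hovey's characterization of this kind needs extra hypotheses: roughly, an almost finitely generated model category and $\lup$ commuting with sequential colimits. Neither is established for $\epun$. The second is not even evident, since the externology of the cotensor $X^{|\Delta[1]|}$ is generated by sets $(I,F)$ with a single exterior $F$, and need not match the colimit externology.

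Your telescope-and-compactness fix at best makes $\pi^e_*$ commute with the colimit. It does not show that $QX$ is levelwise fibrant, that it is an $\lup$-spectrum, or that $X\to QX$ is a stable equivalence. Your two-out-of-three argument uses all three.

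Two smaller points. Cellularity is Proposition~\ref{oju}, not Theorem~\ref{leftpro}. The shift $\pi^e_k(\lup Z)\cong\pi^e_{k+1}(Z)$ should use the continuous spheres $\bsphc^k\in\epun$, which satisfy $\sus\bsphc^k\cong\bsphc^{k+1}$; $\bsph^k$ is not an object of $\epun$.

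\textbf{Why this layer is unnecessary.} The introduction writes the naive colimit, but Definition~\ref{stableq} defines $\pi^{s,e}_k(X)$ through a stable fibrant replacement $X^{fib}$. So $f$ induces isomorphisms on $\pi^{s,e}_*$ iff $f^{fib}$ does. The theorem therefore reduces at once to maps of $\lup$-spectra, where stable equivalences are exactly levelwise weak equivalences. The paper chooses this definition precisely to avoid building a spectrification functor.

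\textbf{The step that does the work.} What remains is upgrading isomorphisms $\pi^e_m(f_n)$ at the section ray to exterior weak equivalences $f_n$. That requires a bijection on $[\N,-]$ and isomorphisms over every discrete ray.
\begin{itemize}
\item The $\pi_0$ part follows from $m=0$, since $\pi^e_0(X_n,\beta)\cong[\N,X_n]$ for any $\beta$.
\item For $m\ge1$ and arbitrary $\beta\colon\N\to X_n$, the paper moves along the weak equivalences $\eta_n$ to $\lup f_{n+1}$. Proposition~\ref{homolup} shows that $\pi^e_m(\lup f_{n+1})$ is an isomorphism at the distinguished ray, and then Lemma~\ref{translation} is applied.
\end{itemize}
That lemma has a genuinely exterior twist. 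The ray $\eta_n\beta$ lies over $q=p\,\eta_n\beta\colon\N\to\ray$, which is generally not the inclusion $\iota$. So one first shows that $\map_\E(\N,\lup Z)\to\map_\E(\N,\ray)$ is a Serre fibration, via Theorem~\ref{expo} and fibrancy. One then transports along the exterior straight-line homotopy from $q$ to $\iota$. Only after that can one translate inside the fibre over $\iota$, which is an honest loop space.

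Your ``H-group over $\ray$'' intuition points in the right direction. However, fibrewise multiplication only acts within the fibre over a fixed $q$, so the fibration-and-transport step is still needed. Your fallback of redefining $\pi^e$ over all discrete rays would change the statement, since Definition~\ref{stableq} uses only the section ray.
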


The stable category $\spece$ of exterior spectra contains important analogues of the fundamental objects of classical stable homotopy theory. Moreover, unlike the classical case, it  
 is enriched by the existence of {\em function spectra} which are given by a bifunctor
$$
\calf\colon \E_*^{op}\times \spece\longrightarrow \spec
$$
which provides a stable representability mechanism for exterior homotopy classes. More precisely (see Theorem~\ref{homoto}):
\begin{theorem*}
For every cofibrant exterior space $X\in\cofi_*$ and every exterior $\Omega$-spectrum $Y\in\spece$,
$$
\pi_k^{st}\calf(X,Y)\cong [X,Y_k]_{\epun}\quad \text{for all $k\in\Z$.}
$$
\end{theorem*}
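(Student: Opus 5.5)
We first fix the function spectrum: for $X\in\E_*$ and $Y\in\spece$, let $\calf(X,Y)$ be the ordinary spectrum with $n$-th space
$$\calf(X,Y)_n=\Map_*(X,Y_n),$$
the pointed simplicial mapping space of $\E_*$ (geometrically realized if $\spec$ consists of topological spectra). Its structure maps are the adjoints of the composites
$$S^1\wedge \Map_*(X,Y_n)\to \Map_*(X,\sus Y_n)\to \Map_*(X,Y_{n+1}).$$
Here the first map is the canonical comparison between the pointed tensor and mapping spaces, and the second is induced by $\sigma_n\colon \sus Y_n\to Y_{n+1}$. The adjoint structure maps are then
$$\Map_*(X,Y_n)\to \Omega\Map_*(X,Y_{n+1})\cong \Map_*(X,\lup Y_{n+1}),$$
induced by $\tilde\sigma_n\colon Y_n\to \lup Y_{n+1}$. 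The isomorphism uses the pointed cotensor description of $\lup$ from Corollary~\ref{adjuncion} together with the standard SM7 identity $\Map_*(X,(Y)^{S^1})\cong \Omega\Map_*(X,Y)$.

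The plan is to show that, when $X$ is cofibrant and $Y$ is an $\Omega$-spectrum, $\calf(X,Y)$ is an $\Omega$-spectrum of Kan complexes, and then read off its homotopy groups. Since $Y$ is an $\Omega$-spectrum, each $Y_n$ is fibrant in $\E_*$ and each $\tilde\sigma_n$ is a weak equivalence between fibrant objects. With $X$ cofibrant, SM7 for the simplicial model structure on $\E_*$ (Proposition~\ref{oju}) gives the following:
\begin{itemize}
\item each $\Map_*(X,Y_n)$ is a Kan complex;
\item by Ken Brown's lemma, $\Map_*(X,-)$ preserves weak equivalences between fibrant objects, so $\Map_*(X,\tilde\sigma_n)$ is a weak equivalence;
\item composing with the isomorphism above, the adjoint structure maps of $\calf(X,Y)$ are weak equivalences.
\end{itemize}
Hence $\calf(X,Y)$ is an $\Omega$-spectrum.

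For an $\Omega$-spectrum $E$ the colimit defining $\pi^{st}_k E=\varinjlim_n \pi_{k+n}E_n$ is eventually constant. Thus for $k\ge0$ we get $\pi^{st}_k\calf(X,Y)\cong\pi_0 \Map_*(X,\lup^kY_k)$, and for arbitrary $k$ we can use the stage $n$ with $k+n\ge 0$. We then compute
$$\pi_{k+n}\Map_*(X,Y_n)\cong \pi_0\Map_*(\sus^{k+n}X,Y_n)=[\sus^{k+n}X,Y_n]_{\E_*}.$$
Here we use two facts: the simplicial homotopy classes $\pi_0\Map_*(A,B)$ agree with $\Ho(\E_*)(A,B)$ when $A$ is cofibrant and $B$ fibrant, and $\sus^{k+n}X$ stays cofibrant because $\sus$ is left Quillen. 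By the derived adjunction this equals $[X,\lup^{k+n}Y_n]_{\E_*}$. The $\Omega$-spectrum equivalences $Y_k\simeq \lup^{n}Y_{k+n}$ (read with the appropriate indexing, and with negative $k$ handled by the shift) then identify this with $[X,Y_k]_{\E_*}$. One checks these identifications are compatible with the colimit maps, since both are induced by the $\tilde\sigma$'s. For the right-hand side one may also invoke Proposition~\ref{rectificacion} to interpret the answer as exterior homotopy classes under $\ray$.

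The main obstacle will be bookkeeping rather than homotopy theory. It has two parts:
\begin{itemize}
\item verifying that the canonical map $\Omega\Map_*(X,Y)\cong\Map_*(X,\lup Y)$ is really an isomorphism, which depends on how the pointed cotensor descends to $\E_*$ over $\ray$;
\item handling the index convention for negative $k$ in the statement $[X,Y_k]$, which for $k<0$ must be read via the $\Omega$-spectrum deloopings.
\end{itemize}
I would first settle the $k\ge0$ case cleanly and then extend by shifting the spectrum.
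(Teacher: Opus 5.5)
Your route is the paper's. You set $\calf(X,Y)_n=\Map_{\E_*}(X,Y_n)$ and build the adjoint structure maps from $Y_n\to\lup Y_{n+1}$ through $\Map_{\E_*}(X,\lup Y)\cong\lup\Map_{\E_*}(X,Y)$. That isomorphism, your first ``obstacle'', is already Corollary~\ref{coropun}(i) combined with Proposition~\ref{lupita}. You then show $\calf(X,Y)$ is an $\Omega$-spectrum and read off $\pi^{st}_*$ at a single level. Your justification of the $\Omega$-spectrum step (Kan complexes by SM7, Ken Brown's lemma for $\Map_{\E_*}(X,-)$ on weak equivalences between fibrant objects) is more careful than the paper's one-line assertion. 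Your computation
\[
\pi_{k+n}\calf(X,Y)_n\cong[\sus^{k+n}X,Y_n]_{\E_*}\cong[X,\lup^{k+n}Y_n]_{\E_*}
\]
is correct.

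The final identification is wrong, and the issue is a sign, not a convention.
\begin{itemize}
\item From $Y_j\simeq\lup^mY_{j+m}$ with $m=k+n$ and $j+m=n$ you get $\lup^{k+n}Y_n\simeq Y_{-k}$, not $Y_k$. For $k>0$ this reads $\lup^kY_0$, which is $Y_{-k}$ in the convention $Y_j=\lup^{-j}Y_0$ for $j<0$.
\item Likewise, for $k\ge 0$ the correct value is $\pi_k\calf(X,Y)_0\cong\pi_0\Map_{\E_*}(X,\lup^kY_0)$. Your $\pi_0\Map_{\E_*}(X,\lup^kY_k)$ is isomorphic to $\pi_0\Map_{\E_*}(X,Y_0)$, which is $\pi^{st}_0$.
\end{itemize}
For any $\Omega$-spectrum $E$ one has $\pi^{st}_kE\cong\pi_0E_{-k}$. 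So your argument actually proves
\[
\pi^{st}_{-k}\calf(X,Y)\cong[X,Y_k]_{\E_*},\qquad k\in\Z,
\]
which is the exterior form of $\pi_{-k}F(X,E)\cong E^k(X)$.

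The statement as written cannot be reached. Take $X=\bsphc^1$. Your own computation gives $\pi^{st}_1\calf(\bsphc^1,Y)\cong[\bsphc^{n+2},Y_n]_{\E_*}\cong\pi^{s,e}_2(Y)$. On the other hand, $[\bsphc^1,Y_1]_{\E_*}\cong\pi^e_1(Y_1)\cong\pi^{s,e}_0(Y)$. The stated formula would thus force $\pi^{s,e}_2(Y)\cong\pi^{s,e}_0(Y)$ for every exterior $\Omega$-spectrum. Classically the same failure appears: $\pi_1F(S^1,H\Z)=0$ but $[S^1,K(\Z,1)]=\Z$.

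The paper's proof contains exactly the same slip, in the line $\pi_k^{st}\calf(X,Y)\cong\pi_0\calf(X,Y)_k$. The right repair is to replace $\pi^{st}_k$ by $\pi^{st}_{-k}$, and likewise in Corollary~\ref{cororepre}. Your phrase ``read with the appropriate indexing'' cannot fix this by reindexing $Y$.
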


Furthermore, the stable category of exterior spectra is rich enough to support a Brown-type representability theorem for exterior cohomology theories, one of the principal outcomes of the theory. We begin by introducing reduced exterior cohomology theories as countable families of contravariant functors from the pointed exterior category $\E_*$ to the category of abelian groups satisfying the exterior analogues of the homotopy, exactness and additivity axioms (see Definition~\ref{redu}). Non-reduced theories are defined similarly. The stable machinery developed throughout the paper then provides all the ingredients needed to establish (see Theorem~\ref{repre} and Corollary~\ref{cororepre}):
\begin{theorem*}
For every reduced exterior cohomology theory $\calhr^*$ there exists an exterior $\Omega$-spectrum $E\in\spece$ such that, for every cofibrant object $X\in\E_*$ and every $n\ge 0$, there are natural isomorphisms
$$
\calhr^n(X)
\cong
[X,E_n]_{\E_*}
$$
\end{theorem*}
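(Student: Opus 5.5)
The plan is to pass from the cohomology theory to functors on the homotopy category of cofibrant objects of $\E_*$, apply a Brown representability theorem there, and then assemble the representing objects into an $\Omega$-spectrum using the suspension isomorphisms.

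\textbf{Step 1: reduction to the homotopy category.} For each $n$, $\calhr^n$ is homotopy invariant. By Proposition~\ref{rectificacion} and the exterior Whitehead theorem (Theorem~\ref{white}), exterior homotopy under $\ray$ agrees with the model-theoretic homotopy on cofibrant–fibrant objects. Hence $\calhr^n$ restricted to cofibrant objects factors through $\Ho(\E_*)$.

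\textbf{Step 2: the Brown axioms.} I must check the two Brown axioms for $\calhr^n$ as a functor on $\Ho(\E_*)^{op}$.
\begin{itemize}
\item Wedge axiom. This follows from the additivity axiom of Definition~\ref{redu}, since coproducts in $\E_*$ of cofibrant objects are homotopy coproducts.
\item Mayer--Vietoris axiom. Homotopy pushouts in $\Ho(\E_*)$ can be modelled by pushouts along cofibrations of cofibrant objects, using left properness (Theorem~\ref{leftpro}). The exactness axiom applied to the resulting cofibre sequences then gives the Mayer--Vietoris condition.
\end{itemize}

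\textbf{Step 3: a generating set and the representability theorem.} Next I need a set of compact generators. The natural candidates are the pointed exterior spheres: $\bsph^n$ together with $\ray$ as basepoint data, or more precisely the cofibres of the generating cofibrations $\cali$ transported to $\E_*$. Theorem~\ref{stableequivstablegroups} and the definition of weak equivalences show that maps out of these objects detect weak equivalences. Since $\E_*$ is cellular and every cofibrant object is a retract of a cell complex built from these spheres, one of two general Brown representability theorems should apply:
\begin{itemize}
\item Jardine's Brown representability for pointed cellular/compactly generated model categories, or
\item Heller's abstract Brown theorem.
\end{itemize}
Either would produce, for each $n$, an object $E_n$ and a natural isomorphism $\calhr^n(X)\cong[X,E_n]_{\E_*}$ on cofibrant $X$. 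I would take $E_n$ to be fibrant.

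\textbf{Step 4: assembling the $\Omega$-spectrum.} The suspension isomorphisms $\calhr^{n}(X)\cong\calhr^{n+1}(\Sigma X)$ give
$$[X,E_n]_{\E_*}\cong[\Sigma X,E_{n+1}]_{\E_*}\cong[X,\Omega E_{n+1}]_{\E_*},$$
where the second isomorphism comes from the derived adjunction of Corollary~\ref{adjuncion}. By Yoneda in $\Ho(\E_*)$ this yields weak equivalences $E_n\to\Omega E_{n+1}$. After fibrant replacement and a standard telescope/rectification argument, these become strict structure maps $\Sigma E_n\to E_{n+1}$ whose adjoints are weak equivalences. That is exactly an exterior $\Omega$-spectrum in $\spece$, and naturality in $X$ is inherited from Step 3.

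\textbf{Main obstacle.} The hardest step is Step 3. Exterior spheres need not be compact (small) in the required sense, because externologies involve infinity. Moreover, Brown's theorem classically needs countable-type generators so that the phantom-map $\varprojlim^1$ issue disappears, together with a group-valued (cogroup) structure on the generators.

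What I would try first is Jardine's version, which only requires a set of generators detecting weak equivalences and filtered homotopy colimits of cell complexes computing cohomology via telescopes. The required Milnor-type exact sequence would come from additivity applied to the telescope cofibre sequence. If surjectivity on $\varinjlim$ fails, I would fall back on constructing $E_n$ directly by attaching $\bdisk^n$-cells inductively, in the classical way, and verify the universal element using only wedge and Mayer--Vietoris.
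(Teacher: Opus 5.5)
Your proposal has the same architecture as the paper's proof. Additivity gives the wedge axiom, and exactness (with $(X\cup_A Y)/Y\cong X/A$) gives Mayer--Vietoris surjectivity for pushouts along cofibrations. A cellular Brown construction produces $E_n$. Finally, $\calhr^n(X)\cong\calhr^{n+1}(\sus X)$ (from $\sus X\cong CX/X$) together with Yoneda in $\Ho(\E_*)$ gives $E_n\simeq\Omega E_{n+1}$. Your fallback in Step~3 is exactly the paper's route; it uses no theorem of Jardine or Heller type.

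Several of your worries are already handled:
\begin{enumerate}
\item Smallness is settled by Proposition~\ref{celular} and Remark~\ref{observa}.
\item The $\varprojlim^1$ term is harmless, because Mayer--Vietoris surjectivity on the telescope is all that is needed to produce a universal element.
\item In Step~4 no telescope or rectification is needed. If each $E_n$ is chosen cofibrant and fibrant, the isomorphism in $\Ho(\E_*)$ is represented by an actual weak equivalence $E_n\to\Omega E_{n+1}$.
\end{enumerate}
Step~1, however, is only justified for cofibrant--fibrant objects, and cofibrant objects of $\E_*$ need not be fibrant (Remark~\ref{fibrant}). To invert all weak equivalences between cofibrant objects, use Ken Brown's lemma and exactness to reduce to showing $\calhr^*(C)=0$ whenever the section $i\colon\ray\to C$ is a trivial cofibration. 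Since every object of $\E$ is fibrant, $i$ has a retraction $r$ with $\id_C\simeq i\circ r$ relative to $\ray$. Then $i\circ r\simeq i\circ p$ by a linear homotopy in $\ray$, and $i\circ p$ factors through $\ray$. The homotopy axiom only asks for homotopies relative to $\ray$, so this suffices.

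The genuine gap is Step~3, which you leave conditional, and the obstacle you flag there is real. The paper treats this point only by asserting that Brown's construction ``adapts verbatim once generators in dimension zero are included''. Any Brown-type argument, including the general theorems you cite, needs compact generators that detect isomorphisms in $\Ho(\cofi_*)$, because its key step shows that a map between two universal objects is an equivalence.
\begin{enumerate}
\item The cells available in $\E_*$ are the free ones $\bsph^{k}\sqcup\ray\to\bdisk^{k+1}\sqcup\ray$ of Remark~\ref{nuevacof}. The set $[\bsph^k\sqcup\ray,X]_{\E_*}$ consists of classes with a moving base ray and is not a group. Bijections on such sets need not give isomorphisms on the based groups $\pi_k^e(X,\beta)$ required by Definition~\ref{we}. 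This is the phenomenon behind the failure of Brown representability for unpointed spaces.
\item Based spheres such as $\bsphc^k\cong\sus\bsphc^{k-1}$, $k\ge1$, are cogroup objects in $\Ho(\E_*)$, with $[\bsphc^k,X]_{\E_*}\cong\pi_k^e(X)$ for fibrant $X$ (Remark~\ref{continuog}, Proposition~\ref{rectificacion}). But they only see the distinguished discrete ray.
\item Theorem~\ref{stableequivstablegroups} concerns spectra and gives no detection result in $\E_*$.
\end{enumerate}
The classical way out uses your own Step~4. Run Brown's argument with the $\bsphc^k$, $k\ge1$, only on objects all of whose discrete rays are exteriorly homotopic to the section; there they do detect weak equivalences, by Remark~\ref{homoequi}(iii). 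Let $E'_{n+1}$ represent $\calhr^{n+1}$ on that class, and take $\Omega E'_{n+1}$ as the $n$th term, via $\calhr^n(X)\cong\calhr^{n+1}(\sus X)\cong[\sus X,E'_{n+1}]_{\E_*}\cong[X,\Omega E'_{n+1}]_{\E_*}$. This still requires checking that suspensions lie in that class, and that the class is closed under the wedges and homotopy pushouts used in Brown's construction.
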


This representability theorem has important consequences in proper homotopy theory. Since proper spaces embed fully faithfully into the category of exterior spaces, every representable exterior cohomology theory determines a proper cohomology theory. By the theorem above, these theories are represented by exterior spectra, even though the representing objects generally lie outside the proper setting.

This phenomenon is particularly transparent for exterior cohomology theories arising from classical cohomology theories. Given a continuous generalized cohomology theory $h^*$, the Alexandroff exterior construction induces an exterior cohomology theory whose restriction to proper spaces coincides with the corresponding cohomology theory with compact supports (see Theorem \ref{clasicacoho} and Corollary \ref{compactsupport}). Applying the representability theorem, we conclude that these theories are represented by exterior spectra. In particular, classical cohomological invariants of non-compact spaces admit a natural interpretation within the stable exterior framework.

As an illustration, Theorem~\ref{alex} shows that, when $h$ is Alexander--Spanier cohomology with coefficients in an abelian group $G$, the associated exterior cohomology theory, which coincides with the compactly supported cohomology theory induced by $h$ in the proper setting, is represented  by an exterior Eilenberg--Mac Lane spectrum of type  $G^\infty$, a countable product of copies of $G$.

The detailed table of contents provides a largely self-explanatory guide to the organization of the paper and to the location of the various constructions and results discussed above.

\section{Preliminaries}\label{prelimi1}

In this section we review the necessary background on spectra in model categories and on the exterior category from a homotopical viewpoint. Throughout the paper, $\Hom_{\quic}$ denotes the set of morphisms in the category $\quic$.

\subsection{Stable structures in model categories}\label{prespec}

We recall the minimal assumptions, following \cite{ho1}, required to develop spectra in a general model category.
In what follows $\quic$ will denote   a left proper cellular model category  endowed with a Quillen pair of endofunctors  $\Sigma\dashv\Omega$. A (sequential) {\em spectrum} in $\quic$ is a family of objects $x=\{x_n\}_{n\ge 0}$ of $\quic$ together with {\em structure maps} $\sigma\colon\sus x_n\to x_{n+1}$, for $n\ge 0$, or equivalently, the  adjoint maps $\sigma^\adj\colon x_n\to \lup x_{n+1}$. A morphism of spectra $f\colon x\to y$ is a collection of maps $\{f_n\}_{n\ge 0}$, $f_n\colon x_n\to y_n$, compatible with the structure maps in the obvious sense. We denote by 
 $\spec(\quic)$ 
 the category of spectra in $\quic$ which  is bicomplete \cite[Lemma 1.3]{ho1}.

 The pair $\sus\dashv\lup$ prolongs  to adjoint endofunctors $\suse\dashv\lupe$ in $\spec(\quic)$ by
$$
 (\suse x)_n=\sus x_n, \qquad (\lupe x)_n=\lup x_{n}
 $$
 with structure maps
 $$
 \sus(\suse x)_n=\sus^2 x_n\stackrel{\sus\sigma}{\longrightarrow}\sus x_{n+1}=(\suse x)_{n+1},\qquad (\lupe x)_n=\lup x_n\stackrel{\lup\sigma^\adj}{\longrightarrow}\lup^2x_{n+1}=\lup(\lupe x)_{n+1}.
 $$

 For each $k\ge0$, the evaluation functor
\begin{equation}\label{eva}
 \eva_k\colon \spec(\quic)\longrightarrow \quic,\quad x\mapsto x_k,
\end{equation}
 has a left adjoint
\begin{equation}\label{susk}
 \sus^{\infty-k}\colon \quic\longrightarrow\spec(\quic),\quad (\sus^{\infty-k}a)_n=\begin{cases} \sus^{n-k}a&\text{if $n\ge k$},\\ \,\,\,0&\text{otherwise},\end{cases}
\end{equation}
being $0$ the initial object of $\quic$.

The {\em projective model structure} in $\spec(\quic)$, which is left proper and cellular by \cite[Thm.~A.9]{ho1}, is defined as follows: a map of spectra $f\colon x\to y$  is a projective fibration or a projective weak equivalence if $f$ is a levelwise fibration or a levelwise weak equivalence. As a result  \cite[Prop.~1.14]{ho1}, $f$ is  a projective cofibration if and only if $f_0$ and the induced maps $g_n\colon x_{n}{\sqcup_{\sus x_{n-1}}}\sus y_{n-1}\to y_n$, $n\ge 1$, are cofibrations.
 Moreover, if $ \mathcal I$ and $\mathcal J$ are the sets of generating cofibrations and trivial cofibrations of $\quic$, then the projective model structure in $\spec(\quic)$ is cofibrantly generated by the  sets $
 {\mathcal I}_{\sus}=\cup_{k\ge 0}\sus^{\infty-k}({\mathcal I})$ and ${\mathcal J}_{\sus}=\cup_{k\ge 0}\sus^{\infty-k}({\mathcal J})$, see \cite[Thm.~1.13]{ho1}.

 With respect to the projective model structure the adjoint pairs of functors $\suse\dashv\lupe$ and $\sus^{\infty-k}\dashv \eva_k$, $k\ge0$, are Quillen \cite[Prop.~1.15]{ho1}.

The {\em stable model structure} on $\spec(\quic)$ is constructed ad hoc to ensure that the adjunction $\suse\dashv\lupe$ is a Quillen equivalence.

For it,  recall 
 that given a  set of morphisms $S$ of a left proper cellular model category $\quic$ the {\em left Bousfield localization} of $\quic$ with respect to $S$ \cite[Thm.~4.1.1]{hirsch0} is a new model structure $\quic_S$ on $\quic$ together with a left Quillen functor $\quic\to\quic_S$ that is universal among left Quillen functors $F\colon \quic\to\quid$ for which $F(s)$ is a weak equivalence for every $s\in S$.
 The cofibrations and weak equivalences of $\quic_S$ are, respectively, the cofibrations of $\quic$ and the $S$-local equivalences. Fibrant objects in $\quic_S$ are the $S$-local fibrant objects of $\quic$, and a map between such  objects  is a weak equivalence in $\quic_S$ if and only if it is a weak equivalence in $\quic$. Left properness and cellularity are preserved by localization.

Provided $\quic$ is endowed with  a Quillen pair of endofunctors $\sus\dashv\lup$,  define the {\em stable model structure} in $\spec(\quic)$ as the Bousfield localization of the projective model structure with respect to  the following set $S$, see \cite[Def 3.3]{ho1}:  for any object $a\in \quic$ and for any $n\ge 0$ write   $\id_{\sus a}\colon\sus a\to \eva_{n+1}\sus^{\infty-n} a$ whose adjoint is denoted by $\zeta_n^a\colon 
 \sus^{\infty-(n+1)} \sus a\to \sus^{\infty-n} a$. Then,
\begin{equation}\label{equibous}
S=\{\zeta_n^{Qc}\}_{n\ge0,c}
\end{equation}
where $c$ runs through any domain or codomain of a set of generating cofibrations of $\quic$ and $Q$ denotes a functorial cofibrant replacement. 

 The fibrant objects in the stable structure are the {\em  $\lup$-spectra}, i.e.,  objects $x\in\spec(\quic)$ such that each $x_n$ is fibrant and, for all $n\ge 0$, the adjoint $x_n\to\lup x_{n+1}$ of the structure map is a weak equivalence  (see \cite[Thm.~3.4]{ho1}).

As intended, with respect to the stable structure, the 
 pair $\suse\dashv\lupe$ is a Quillen equivalence in $\spec(\quic)$ \cite[Thm.~3.9]{ho1}.

 \subsection{Exterior homotopy theory}\label{prelimi2}

Standard results from exterior homotopy theory can be found in \cite{calpiher} or in the overview provided in \cite[\S2]{gargarmu2}. Here, we provide only a concise account.

Throughout this paper, all spaces are assumed to be compactly generated and weakly Hausdorff. We denote their category by $\topo$, and by $\map(X,Y)$ the set of morphisms $\Hom_{\topo}(X,Y)$ from $X$ to $Y$, endowed with the compact-open topology. We write $\topo_*$ and $\map_*$ for the corresponding pointed category and mapping space, respectively.

 An \textit{exterior space} $(X,\ext)$ consists of a topological space $(X,\tau)$
together with a nonempty family of {\em exterior sets} $\ext\subseteq \tau $, called
\textit{externology}, which  is closed under finite intersections and,
whenever $U \supseteq E$, $E \in \ext$, $U\in \tau$, then $U\in \ext$. Roughly speaking, an exterior space
is a topological space with a neighbourhood system at infinity given by the externology. The complements of exterior sets are called exterior closed, or e-closed.  When $\ext=\tau$ we call it the {\em total} externology. A continuous map $f\colon (X,\ext ) \rightarrow (Z,\ext'
)$ is  \textit{exterior} if  $f^{-1}(E) \in \ext$, for all $E \in \ext'$. We denote by $\E$ the corresponding  category. For exterior spaces $X$ and $Z$ we often write $\map_\E(X,Z)$  for the morphism set  $\Hom_\E(X,Z)$ viewed as a subset of the mapping space $\map(X,Z)$. 

Given a collection $\mathcal F$ of open sets of a topological space $X$, the {\em externology generated by $\mathcal F$} is the coarsest externology containing it. Equivalently, it consists of all open sets that contain a finite intersection of elements of $\mathcal F$. For example, if   $Y\subseteq X$ is a subspace of an exterior space, the {\em relative externology}, defined as the coarsest externology on $Y$ that makes the inclusion map exterior, is generated by the sets  $E\cap Y$ as $E$ ranges over the exterior sets of $X$.
  
 The category $\E$ is complete and cocomplete \cite[Theorem 3.3]{calpiher}. Moreover, if we denote by $\mathbf{P}$  the category of spaces and proper continuous maps there is a full embedding
\begin{equation}\label{fullembed} \PP \hookrightarrow \E,\qquad X\mapsto X_{cc},\end{equation}
 where $X_{cc}$ denotes the space $X$ endowed with the \textit{cocompact externology},  formed by the
family of the complements of all closed compact subspaces. Throughout the text, whenever we write $X\in \PP$, we implicitly assume that $X$ is endowed with the cocompact externology.

A particularly interesting subcategory of $\PP$ is given by  $\PP_\infty$ consisting of non-compact, locally compact, $\sigma$-compact Hausdorff spaces. For any  $X\in\PP_\infty$, we may find an increasing sequence of compact subspaces $K_1\subset K_2\subset\dots \subset X$ such that $X=\cup_{m\ge 1} K_m$.

The product $X\times Z$ of two exterior spaces is  equipped with the {\em product externology} generated by the product  of exterior open sets of $X$ and $Z$. 

On the other hand, the {\em external product} $X\timess Y$ of an exterior space $X$ and a topological space $Y$ is the externology in $ X\times Y$ given by those open sets which, for each $y\in Y$, contain a product $F\times U$ of an exterior  set of $X$ and an open neighborhood of $y$. Note that, whenever $Y$ is compact, an open set of $X\times Y$ is exterior in $X\timess Y$ precisely when it contains $F\times Y$ for some exterior set $F\subset X$. That is $X\timess Y=X\times Y_{tr}$ in which $Y_{tr}$ denotes the space $Y$ endowed with the trivial externology $\{Y\}$. Moreover, for any topological space  $X$,
\begin{equation}\label{prodcom}
X_{cc}\timess Y=(X\times Y)_{cc}.
\end{equation} 

For any exterior space $X$,  the external product $X\timess I$ yields the appropriate homotopy notion: two maps $f,g\colon X\to Y$ in $\E$ are {\em (exterior or exteriorly)  homotopic}, and we write $f\simeq_e g$, if there is an exterior map $F\colon X\timess I\to Y$ such that $F_0=f$ and $F_1=g$. We denote by $[X,Y]$ the set of exterior homotopy classes of exterior maps from $X$ to $Y$. As $X_{cc}\timess I=(X\times I)_{cc}$ for any topological space $X$, the  embedding (\ref{fullembed}) again induces a full embedding
\begin{equation}\label{fullembed2} \PP/\simeq_{cc} \hookrightarrow \E/\simeq_e\end{equation}
from the {\em proper homotopy category}.

On the other hand, the {\em external mapping} $Z^Y$ of an exterior space $Z$ and a topological space $Y$ is defined by equipping $\map(Y,Z)$ with the externology generated by the sets  $(K,F)=\{f\colon Y\to Z,\,\,f(K)\subset F\}$ where $K$ ranges over the compact subspaces of $Y$ and $F$ over the exterior sets of $Z$. Note that if  $Y$ is compact, then $Z^Y$ is precisely $\map(Y_{tr},Z)$.

Furthermore, to obtain the appropriate exponential laws, given exterior spaces $X$ and $Z$, the morphism set   $\Hom_\E(X,Z)$  between the exterior spaces $X$ and $Z$ is equipped with a  refinement of  the compact-open topology. It is generated by the compact-open subbasis together with the sets $(L,F)$ as $F$ ranges over the exterior sets of $Z$ and $L$ ranges over the {\em $e$-compact} subsets of $X$, namely those $L$ for which $L\setminus E$ is compact for every exterior set $E\subset X$. We denote this topological space by $\map_\E(X,Z)$. With this notation, the following is \cite[Thm.~3.2]{calpiher}. 
In our setting the additional assumptions in loc.\ cit.\ can be omitted, 
since all spaces considered here are compactly generated weakly Hausdorff 
and the usual exponential law holds for this class.

\begin{theorem}\label{expo}
Let $X,Z\in\E$ be exterior spaces and let $Y$ be a topological space. Then there is a natural bijection
$$
\map_\E(X\timess Y,Z)\cong\map_\E(X,Z^Y).
$$
Moreover, if $X$ is endowed with the cocompact externology, there is a natural bijection
$$
\map_\E(X\timess Y,Z)\cong \map\bigl(Y,\map_\E(X,Z)\bigr).
$$\hfill$\square$
\end{theorem}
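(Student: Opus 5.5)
Both bijections will come from the classical exponential law in compactly generated weakly Hausdorff spaces, $\map(X\times Y,Z)\cong\map(Y,\map(X,Z))$ via $f\mapsto \hat f$, $\hat f(x)(y)=f(x,y)$ (in the first case we use the version $\map(X\times Y,Z)\cong \map(X,\map(Y,Z))$). The work is to show that this bijection restricts to exterior maps on both sides. For the topological statement, continuity of the restricted bijection in the refined topologies is checked on subbasic opens as in the classical proof. I will do this for each formula separately.

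For $\map_\E(X\timess Y,Z)\cong\map_\E(X,Z^Y)$, I show that $f$ is exterior iff its adjoint $\hat f\colon X\to Z^Y$ is. Suppose $f$ is exterior. It suffices to check subbasic exterior sets $(K,F)$ of $Z^Y$, since preimages commute with finite intersections and with enlargement to open supersets. Now $\hat f^{-1}(K,F)=\{x : f(\{x\}\times K)\subset F\}$. Since $f^{-1}(F)$ is exterior in $X\timess Y$, for each $y\in K$ it contains some $E_y\times U_y$. Compactness of $K$ gives finitely many $U_{y_i}$ covering $K$, so $E=\bigcap E_{y_i}$ satisfies $E\times K\subset f^{-1}(F)$, hence $E\subset\hat f^{-1}(K,F)$. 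Openness of $\hat f^{-1}(K,F)$ is ordinary continuity. Conversely, suppose $\hat f$ is exterior, and take $F$ exterior in $Z$ and $y\in Y$. Choose a compact neighbourhood $K$ of $y$, or, without local compactness, $K=\{y\}$ combined with continuity. Then $\hat f^{-1}(\{y\},F)$ contains an exterior set $E$, so $E\times\{y\}\subset f^{-1}(F)$. To obtain a product $E'\times U$ one needs a compact $K$ with $y$ in its interior; for general $Y$ I would instead use the tube-lemma-type argument with the open set $f^{-1}(F)$. I expect this direction to be the main obstacle. The plan is to argue via the compactly generated topology: test against compact Hausdorff $K\to Y$, where the product $X\times K$ with $K$ compact reduces to $X\times K_{tr}$, and use the remark that $X\timess K=X\times K_{tr}$.

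For the second formula, with $X=X_{cc}$, I show that $f\colon X\timess Y\to Z$ is exterior iff each $f_y=f(-,y)$ is exterior and $y\mapsto f_y$ is continuous into $\map_\E(X,Z)$. The key point is that, for the cocompact externology, the $e$-compact subsets $L$ of $X$ are exactly the closed sets, since $L\setminus E$ is compact for all cocompact $E$. The extra subbasic sets $(L,F)$ then encode the condition that a closed set is eventually mapped into $F$. Given $f$ exterior, $F$ exterior, and $f_{y_0}\in(L,F)$, I use $f^{-1}(F)\supset (X\setminus C)\times U$ with $C$ compact. On the compact set $L\cap C$ the compact-open argument gives a neighbourhood $U'$ of $y_0$ with $f((L\cap C)\times U')\subset F$, so $f_y\in (L,F)$ for $y\in U\cap U'$. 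Conversely, given continuity of $y\mapsto f_y$, each $f_y$ is exterior. An exterior set $E\times U$ inside $f^{-1}(F)$ near $y_0$ comes from taking $L=X\setminus C'$ closed-ish with $f_{y_0}(L)\subset F$, and the preimage of $(L,F)$ gives $U$; choosing $L$ as the closure of an exterior set contained in $f_{y_0}^{-1}(F)$ is the step I expect to require care.

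Naturality in all variables is immediate, since the bijections are the restrictions of the classical natural exponential bijections.
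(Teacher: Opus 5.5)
Your strategy is the natural one: restrict the classical exponential bijection and check exteriority on each side. Two of your implications are fine: ``$f$ exterior $\Rightarrow$ $\hat f$ exterior'' (compactness of $K$ plus finitely many $E_{y_i}\times U_{y_i}$), and in the second formula ``$f$ exterior $\Rightarrow$ $\tilde f$ continuous'' (tube lemma on $L\cap C$).

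\textbf{The gap: the converse in the first formula.} It sits exactly where you suspected, and the compactly generated trick cannot close it. Exteriority of $f$ at $y$ requires an \emph{open} $U\ni y$ with $E\times U\subseteq f^{-1}(F)$. Exteriority of $\hat f$ only gives $E_K\times K\subseteq f^{-1}(F)$ for compact $K$. Testing against compact $K\to Y$ just reproduces this information, and the tube lemma is unavailable because $E$ is not compact. Unless some compact $K$ is a neighbourhood of $y$, you never obtain $U$.

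In fact the implication is false for non-locally compact $Y$:
\begin{itemize}
\item Let $Y=\mathbb{Q}$, and let $Z=\{0,1\}$ be discrete with externology $\{\{1\},Z\}$.
\item Let $X$ be the discrete set of all clopen sets $S=\mathbb{Q}\cap(a,b)$ with $a<b$ irrational. Give it the externology generated by $E_K=\{S : S\cap K=\varnothing\}$, for $K\subseteq\mathbb{Q}$ compact.
\item Put $f(S,y)=0$ if and only if $y\in S$.
\end{itemize}
Then $\hat f^{-1}(K,\{1\})=E_K$, so $\hat f$ is exterior. However, compact subsets of $\mathbb{Q}$ are closed and nowhere dense in $\mathbb{R}$. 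Hence every $E_K$ contains some $S$ meeting any prescribed neighbourhood $U$ of $0$, and $f^{-1}(\{1\})$ contains no set $E\times U$. What is needed is local compactness of $Y$, so that $K$ can be taken to be a compact neighbourhood of $y$. For compact $Y$ you simply take $K=Y$. That is the only case used later in the paper ($S^n$, $D^n\times I$, $|\Delta[n]|$).

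\textbf{The same problem in the second formula.} Your candidate $L=\overline{X\setminus C_0}$ need not lie in $f_{y_0}^{-1}(F)$; take for instance $X=\ray$, $C_0=\{0\}$ and $f_{y_0}(0)\notin F$. If $X$ is locally compact Hausdorff, use instead $L=X\setminus\operatorname{int}C_1$, where $C_1$ is a compact neighbourhood of $C_0$. Then $L$ is closed, hence $e$-compact, and $f_{y_0}(L)\subseteq F$. Setting $U=\tilde f^{-1}(L,F)$ gives $(X\setminus C_1)\times U\subseteq f^{-1}(F)$. Without local compactness of $X$ no suitable $L$ need exist: in $\mathbb{Q}$, every closed set missing $0$ has a complement that is not relatively compact. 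One can indeed build counterexamples with $X=\mathbb{Q}_{cc}$ and $Y=\mathbb{Q}$.

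\textbf{Comparison with the paper.} The paper gives no argument of its own. It quotes \cite[Thm.~3.2]{calpiher} and asserts that the extra hypotheses there can be dropped because the CGWH exponential law holds. That law only repairs the underlying topological bijection, not the comparison of externologies. So a correct proof along your lines needs local compactness hypotheses (presumably those of the cited result). These hypotheses are harmless for the paper, which applies the theorem only with $X=\N$ and $Y$ compact.
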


Given an exterior space $Z$, let $\E^Z=Z{\downarrow}\E$ denote the category of exterior spaces under $Z$. Its objects are pairs $(X,\alpha)$, or simply $X$ when $\alpha$ is understood, where $X\in \E$ and $\alpha \colon Z\to X$ is an exterior map. Morphisms in $\E^Z$ are given by commutative triangles under $Z$ whereas homotopy in $\E^Z$ is defined in the usual way: two maps   $f,g\colon X\to Y$ in $\E^Z$ are {\em homotopic relative to} $Z$ if there is an exterior homotopy $F\colon X\timess I\to Y$ such that, for each $t\in I$, the map $F_t\colon X\to Y$ is a morphism in $\E^Z$. We denote by $[X,Y]^Z$ the set of exterior homotopy classes relative to $Z$. 

As in the proper category $\PP$, the role of a continuous or discrete base point in an exterior space $X\in\E$ is played by a {\em ray}, namely an exterior map $\ray\to X$, or by a {\em discrete ray}, namely an exterior map $\N\to X$. Here, both $\ray=[0,\infty)$ and $\N$ are endowed with the cocompact externology.

\section{A simplicial, proper and cellular model structure on the exterior category}

We begin by reviewing  the simplicial model structure on $\E$ given  \cite[\S4]{calpiher}. 
As for the simplicial character, the functor of {\em function complexes} is given by
$$
 \E^{op}\times \E\to\catss,\quad (X,Y)\mapsto \Map_\E(X,Y),
$$
where
$$
\Map_\E(X,Y)_n=\Hom_\E(X\timess|\Delta[n]|,Y),
$$
with $|\cdot|$ denoting geometrical realization and  $\Delta[n]$ the standard $n$-simplex. The {\em composition map} 
$$
\Map_\E(X,Y)\times \Map_\E(Y,Z)\longrightarrow \Map_\E(X,Z)
$$
sends a pair $f,g$ of $n$-simplices to the map
$$
X\timess|\Delta[n]|\stackrel{\id_X\timess\Delta}{\longrightarrow}X\timess(|\Delta[n]|\times
|\Delta[n]|) \stackrel{f\timess\id_{|\Delta[n]|}}{\longrightarrow}Y\timess|\Delta[n]|\stackrel{g}{\longrightarrow} Z.
$$

On the other hand, the  tensors and cotensors  are given, respectively, by the functors
$$
\E\times \catss^f\to\E,\quad (X,K)\mapsto X\otimes K=X\timess |K|,
$$
and
$$
\E\times {\catss_*^f}^{op}\to\E,\quad (X,K)\mapsto X^K=X^{|K|},
$$
where $\catss^f$ denotes the full subcategory of  $\catss$ consisting of finite simplicial sets. 
In particular, we have natural isomorphisms
$$\Map_\E(X,Y)_0\cong \Hom_\E(X,Y)
$$
and
$$
\Map_{\catss}\bigl(K,\Map_\E(X,Y)\bigr)\cong \Map_\E(X\timess |K|,Y)\cong\Map_\E(X,Y^{|K|}).
$$
Here, $\Map_{\catss}$ denotes the usual simplicial mapping space.

The model structure on $\E$ introduced in \cite{calpiher}, which we now recall, is compatible with this simplicial structure and closely resembles Quillen's original simplicial model structure on topological spaces, whose weak equivalences are the weak homotopy equivalences and whose generating cofibrations and generating trivial cofibrations are
$$
I=\{ S^{n-1}\to D^n\}_{n\ge0}
\quad \text{and} \quad
J=\{ D^n\to D^n\times I\}_{n\ge0},
$$
respectively.

\begin{definition}\label{pesfera}  Given $n\ge 0$, the {\em $n$th Brown--Grossman sphere}, or simply the {\em exterior sphere}, is the exterior space $(\bsph^n, \xi)\in\E^\N$  where 
$$
\bsph^n=\N\timess S^n
$$
and $\xi(m)=(m,x_0)$, being $x_0$ the chosen base point of the $n$-sphere.

Analogously, the $n$th {\em Brown--Grossman disk}, or simply {\em exterior disk}, and {\em cylinder} are defined by
$$
\bdisk^n=\N\timess D^n,\qquad \bcil^n=\N\timess(D^n\times I).
$$
As before, these exterior spaces are canonically endowed with discrete rays and regarded as objects of $\E^\N$. 
 
 Finally, we denote by
$$
\mathcal{I}=\{\bsph^{n-1}\to \bdisk^n\}_{n\ge 0}
\quad\text{and}\quad
\mathcal{J}=\{\bdisk^{n}\to \bcil^n\}_{n\ge 0}
$$
the images of $I$ and $J$ under the functor
$\N\timess-\colon\topo_*\to\E^\N$.
\end{definition}

\begin{definition}\label{pgrupo}
Let $(X,\alpha)\in\E^\N$. The {\em $n$th Brown--Grossman homotopy group}, or simply the {\em exterior homotopy group}, of $(X,\alpha)$ is
$$
\pi_n^e(X,\alpha)=[\bsph^n,X]^\N.
$$
It is endowed with the unique group structure for which the bijection
$$
[\bsph^n,X]^\N\cong\pi_n(\map_\E(\mathbb N,X),\alpha),
$$
induced by the natural identification
$$
\Hom_{\E^\N}\bigl((\bsph^n,\xi),(X,\alpha)\bigr)
\cong
\map_*\bigl((S^n,x_0),(\map_\E(\mathbb N,X),\alpha)\bigr)
$$
provided by Theorem \ref{expo}, is a group isomorphism. A clear and concise survey of these groups can be found in [20, §3].
\end{definition}

\begin{definition}\label{we}
A map $f\colon X\to Y$ in $\E$ is an  {\em exterior weak equivalence} if it induces a bijection on discrrete rays and, for each of them, an isomorphism on the corresponding Brown--Grossman homotopy groups. More precisely, 
$$
f_*\colon[\N,X]\stackrel{\cong}{\longrightarrow} [\N,Y]
$$
is a bijection and, for each discrete ray $\alpha\colon\N\to X$, the induced map 
$$
\pi_n^e(f,\alpha)\colon \pi_n^e(X,\alpha)\stackrel{\cong}{\longrightarrow} \pi_n^e(Y,f\alpha)
$$
is an isomorphism for all $n\ge 1$.
\end{definition}
 The following collects some useful observations.

\begin{rem}\label{homoequi}
(i) For any discrete ray $\alpha\colon\N\to X$, the set
$$
\pi_0^e(X,\alpha)=[(\bsph^0,\xi),(X,\alpha)]^\N
$$
is naturally identified with $[\N,X]$. Thus, the above definition is the direct analogue of the classical definition of weak homotopy equivalence, which requires a bijection on $\pi_0$ together with isomorphisms on the higher homotopy groups.

\smallskip

(ii) Observe  that any exterior map between spaces without discrete rays is automatically an exterior weak equivalence. In particular, any map
$
f\colon X\to Y
$
between exterior spaces endowed with the total externology is an exterior weak equivalence, since such spaces admit no discrete rays. For example, any map between compact spaces endowed with the cocompact externology is an exterior weak equivalence.

\smallskip

(iii) As in the classical case, if $f,g\colon X\to Y$ are exterior homotopic maps and $\alpha\colon\N\to X$ is a discrete ray, then the induced homomorphisms on exterior homotopy groups
$$
\pi_*^e(f,\alpha)\colon \pi_*^e(X,\alpha)\to\pi_*^e(Y,f\alpha),
\qquad
\pi_*^e(g,\alpha)\colon \pi_*^e(X,\alpha)\to\pi_*^e(Y,g\alpha)
$$
agree up to the canonical isomorphism
$
\pi_*^e(Y,f\alpha)\cong \pi_*^e(Y,g\alpha)
$
induced by the homotopy. Therefore, every exterior homotopy equivalence is an exterior weak equivalence.
\end{rem}

\begin{theorem}\label{modelosimp}
The category $\,\E$ admits a simplicial, cofibrantly generated model structure with exterior weak equivalences as weak equivalences, generating cofibrations $\mathcal{I}$, and generating trivial cofibrations $\mathcal{J}$.
\end{theorem}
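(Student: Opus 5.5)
The plan is to use Kan's recognition theorem for cofibrantly generated model categories (Hovey, Thm.~2.1.19; Hirschhorn, Thm.~11.3.1), transporting Quillen's structure on $\topo$ through the functor $\map_\E(\N,-)$. The key observation comes from Theorem~\ref{expo}, since $\N$ carries the cocompact externology. For any topological space $A$ there is a natural bijection
$$
\Hom_\E(\N\timess A,X)\cong \Hom_{\topo}\bigl(A,\map_\E(\N,X)\bigr),
$$
so $\N\timess-\colon\topo\to\E$ is left adjoint to $R=\map_\E(\N,-)$. By construction, $\mathcal I$ and $\mathcal J$ are the images of $I$ and $J$ under this left adjoint. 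Hence a map $p$ in $\E$ has the right lifting property with respect to $\mathcal J$ (resp.\ $\mathcal I$) if and only if $R(p)$ is a Serre fibration (resp.\ a trivial Serre fibration).

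Next I would check that exterior weak equivalences are exactly the maps $f$ for which $R(f)$ is a weak homotopy equivalence. Points of $R(X)$ are the discrete rays. By Theorem~\ref{expo} with $Y=I$, paths in $R(X)$ are exterior homotopies $\N\timess I\to X$, so $\pi_0R(X)=[\N,X]$. Definition~\ref{pgrupo} gives $\pi_n^e(X,\alpha)\cong\pi_n(R(X),\alpha)$ naturally. With this identification the rest is a right-transfer argument:
\begin{itemize}
\item Exterior weak equivalences satisfy 2-out-of-3 and closure under retracts, because weak homotopy equivalences do.
\item The domains of $\mathcal I$ and $\mathcal J$ are small relative to the relevant cell complexes.
\item $\mathcal J$-cell complexes are weak equivalences.
\item A map has the right lifting property against $\mathcal I$ if and only if it is a weak equivalence with the right lifting property against $\mathcal J$. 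This follows from the corresponding fact in $\topo$ and the lifting characterisation above.
\end{itemize}

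The main obstacle is the third point, that relative $\mathcal J$-cell complexes are exterior weak equivalences. The functor $R$ need not preserve pushouts or transfinite compositions in $\E$, so I cannot simply apply it to the colimit. Instead I would show that each generating map $\bdisk^n\to\bcil^n$ is the inclusion of an exterior strong deformation retract; the retraction is $\N\timess$ the classical one. Pushouts along such inclusions in $\E$ remain exterior strong deformation retracts, because colimits in $\E$ are computed on the underlying spaces with the induced externology and $X\timess I$ commutes with these pushouts. Remark~\ref{homoequi}(iii) then shows each stage is an exterior weak equivalence. For transfinite compositions I would use a compactness argument. Maps $\N\timess S^n\to X$ and their homotopies into a sequential colimit of closed $T_1$-type inclusions should factor through a finite stage. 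This needs care, since $\N\timess S^n$ is not compact, though each $\{m\}\times S^n$ is. The externology condition involves only finitely many exterior sets at each compact level, which I expect to give the required factorization. Smallness of the domains should be handled by the same argument.

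Finally, simpliciality. I would verify axiom SM7 in its pushout-product form for generating maps. Since $\bsph^{n-1}\timess|\partial\Delta[k]|\to\dots$ equals $\N\timess$ of the classical pushout-product, this reduces to the classical fact in $\topo$, using that $\N\timess-$ commutes with pushouts and that $(\N\timess A)\timess B=\N\timess(A\times B)$. The tensor and cotensor adjunctions displayed before Definition~\ref{pesfera} supply the remaining axioms.
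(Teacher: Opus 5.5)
Your proposal is correct, and its skeleton coincides with the paper's. Both use the recognition theorem. Both use the fact that $\bdisk^n\to\bcil^n$ and its pushouts are inclusions of exterior strong deformation retracts, hence in $W$ by Remark~\ref{homoequi}(iii). Both reduce the pushout-product axiom to Quillen's simplicial structure on $\topo$ through $\N\timess-$.

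The genuine difference is that you organize the verification as a right transfer along $\N\timess-\dashv\map_\E(\N,-)$. The paper imports $\rlp(\mathcal I)=\rlp(\mathcal J)\cap W$ from \cite[Prop.~4.1]{calpiher}. It proves $\cel(\mathcal J)\subset\cof(\mathcal I)$ by transporting the product cell structure of $D^n\times I$ relative to $D^n\times\{0\}$. In your version both follow formally, because $\rlp(\mathcal I)$, $\rlp(\mathcal J)$ and $W$ are the preimages under $\map_\E(\N,-)$ of trivial Serre fibrations, Serre fibrations and weak homotopy equivalences. The last of these uses $\pi_0\map_\E(\N,X)=[\N,X]$ and Definition~\ref{pgrupo}. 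This is more self-contained and explains conceptually why the structure exists. You should, however, state explicitly that $\rlp(\mathcal I)\subseteq\rlp(\mathcal J)$ yields $\cof(\mathcal J)\subseteq\cof(\mathcal I)$, since the recognition theorem requires it and your list omits it.

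You leave smallness to a sketch, but the paper does not prove it either; it cites \cite[Prop.~4.2]{calpiher}. Your mechanism is the right one. Suppose the spheres $\{m\}\times S^n$ met cells of unboundedly late stages. Deleting one interior point from each such cell then gives an exterior set of the colimit whose preimage is not exterior. In addition, exterior sets of a stage extend cellwise to the colimit, so the resulting factorization through a finite stage is exterior.

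On transfinite compositions of $\mathcal J$-cells you are more careful than the paper, which passes over them. For the $\omega$-indexed compositions produced by the small object argument, a telescoped deformation retraction would even avoid compactness. This is legitimate because $-\timess I$ is a left adjoint by Theorem~\ref{expo} and so commutes with the colimit.
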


This theorem is essentially \cite[Thm.~4.1]{calpiher}. Nevertheless, we include a short proof, reformulated in the modern language of Hirschhorn's recognition theorem for cofibrantly generated model categories, pointing out how some of the assertions in \emph{loc.\ cit.} are used in verifying some of the hypotheses of the recognition theorem. We also provide a shorter, perhaps more conceptual, proof of the simplicial nature of the resulting model structure, which we henceforth refer to as the \emph{exterior model structure}.
 
 \begin{proof} (i) {\em There is a model structure as stated.} 
 
 \smallskip
 
 As usual, for $A=\mathcal{I},\mathcal{J}$, we denote by $\cel(A)$ the class of relative $A$-cell complexes obtained as transfinite composition of pushouts of coproducts of maps of $A$. We denote by $\cof(A)$ the class of retracts of morphisms of $\cel(A)$, and by  $\rlp(A)$ the class of morphisms having the rlp with respect to every morphism of $A$. Finally, we denote by $W$ the class of exterior weak equivalences.
  
 Recall that the recognition theorem for cofibrantly generated model categories  \cite[Thm.~11.3.1]{hirsch0} proves the existence of a model structure as asserted, once we verify the following: $W$ satisfies two-out-of-three property; both $\mathcal{I}$ and $\mathcal{J}$ permit the small object argument over a certain ordinal, $\N$ in our case; $\cof(\mathcal{J})\subset \cof(\mathcal{I})\cap W$; and $\rlp(\mathcal{I})=\rlp(\mathcal{J})\cap W$.
  
  The first condition trivially holds. Furthermore, it is clear that $W$ is also closed for retracts.
  
The fact that the domains of the morphisms in $\mathcal{I}$ (resp. $\mathcal{J}$) are small relative to $\mathcal{I}$ (resp. $\mathcal{J}$) with respect to $\omega$, and therefore with respect to any larger cardinal, is a particular instance of \cite[Prop.~4.2]{calpiher}.

  To prove the inclusion $\cof(\mathcal{J})\subset \cof(\mathcal{I})\cap W$ we first check that every map $\bdisk^n\to \bcil^n$ in $\calj$ is an exterior weak equivalence lying in $\cel (I)$.  Indeed, each of these maps admits an exterior retraction $\bcil^n\to \bdisk^n$, induced by the retraction $D^n\times I\to D^n$, which makes $\bdisk^n$  an exterior strong deformation retract of $\bcil^n$. In particular, by Remark \ref{homoequi}(iii),  $\bdisk^n\to \bcil^n$ is an exterior weak equivalence.

  On the other hand, the usual product CW structure on $D^n\times I$ relative to $D^n\times{0}$, which expresses the inclusion
$
D^n\cong D^n\times{0}\to D^n\times I
$
as a relative $I$-cell complex,
translates to the exterior setting showing that $\bdisk^n\to \bcil^n$ is a relative $\mathcal{I}$-cell complex.

Next, observe that for any exterior pushout of the form
$$
\xymatrix{
\bdisk^n\ar[r] \ar[d]& X \ar[d]
 \\
\bcil^n\ar[r]&
X\cup_{\bdisk^n}\bcil^n,}
$$
the exterior space $X$ is again an exterior strong deformation retract of   $X\cup_{\bdisk^n}\bcil^n$. It follows that every relative $\mathcal{J}$-cell lies in $\cof(\mathcal{I})\cap W$. Finally, since $W$ and $\cof(\mathcal{I})$ are closed under retracts, we conclude that $\cof(\mathcal{J})\subset \cof(\mathcal{I})\cap W$.

Finally, the equality $\rlp(\mathcal{I})=\rlp(\mathcal{J})\cap W$ is precisely \cite[Prop.~4.1]{calpiher}.

 \smallskip
 
 (ii) {\em The exterior model structure is simplicial.}

\smallskip

By \cite[Prop.~9.3.7(3)]{hirsch0}, it suffices to verify the pushout-product axiom. Recall that if $u\colon A\to B$ is a morphism in $\E$ and $v\colon K\to L$ is a morphism in $\catss^f$, their pushout-product is the exterior map
$$
u\,\square\, v\colon
(B\otimes K)\cup_{A\otimes K}(A\otimes L)
\longrightarrow
B\otimes L.
$$
The pushout-product axiom states that if $u$ is a cofibration in $\E$ and $v$ is a cofibration in $\catss$, then $u\,\square\, v$ is a cofibration in $\E$, which is trivial whenever either $u$ or $v$ is trivial.
However, since the model structure on $\E$ is cofibrantly generated, it suffices to check the pushout-product axiom on the generating cofibrations and generating trivial cofibrations.

Consider the functor
$$
F=\N\timess-\colon\topo_*\longrightarrow\E^\N,
$$
which, by Theorem~\ref{expo}, is a left adjoint. Hence it preserves pushouts and transfinite compositions, so that
$$
F(\cof(I))\subseteq\cof(\mathcal I),
\qquad
F(\cof(J))\subseteq\cof(\mathcal J).
$$
Moreover, for every map $i$ in $\topo_*$ and every simplicial map $v$, there is a natural isomorphism
$$
F(i)\square v\cong F(i\square v),
$$
since
$
(\N\timess A)\otimes K
\cong
\N\timess(A\times|K|)
$
and again, $F$ preserves pushouts.

Let $i\in I$, $j\in J$, and let
$
u\colon\partial\Delta[m]\to\Delta[m]$,
$
v\colon\Lambda^r[m]\to\Delta[m]
$
be a generating cofibration and a generating trivial cofibration of $\catss$, respectively. Since Quillen's model structure on $\topo_*$ is simplicial,
$$
i\square u\in\cof(I),\qquad
i\square v\in\cof(J),\qquad
j\square u\in\cof(J).
$$
Applying $F$ gives
$$
F(i)\square u\in\cof(\mathcal I),\qquad
F(i)\square v\in\cof(\mathcal J),\qquad
F(j)\square u\in\cof(\mathcal J),
$$
which is precisely the pushout-product axiom.
\end{proof}

\begin{rem}\label{observa}
Note that, as for any cofibrantly generated model category in which the
generating cofibrations are monomorphisms, the notions of compactness
and smallness relative to $\cell(\calit)$ in $\E$ are equivalent, see \cite[Prop~10.8.7]{hirsch0}.
Indeed, let
$$
X_0 \longrightarrow X_1 \longrightarrow \cdots \longrightarrow X_\beta
\longrightarrow \cdots \longrightarrow X_\lambda=\varinjlim_{\beta<\lambda} X_\beta
$$
be a transfinite composition in $\cell(\calit)$ indexed by an ordinal
$\lambda$. Then the canonical map
\begin{equation}\label{small}
\varinjlim_{\beta<\lambda} \Hom_\E(K,X_\beta)\longrightarrow
\Hom_\E(K,X_\lambda)
\end{equation}
is always injective. To see this, consider two elements of
$\varinjlim_{\beta<\lambda} \Hom_\E(K,X_\beta)$ represented by
$f\colon K\to X_\gamma$ and $g\colon K\to X_\gamma$ for some
$\gamma<\lambda$, and assume that the induced maps $i_\gamma f,i_\gamma g$
in $\Hom_\E(K,X_\lambda)$ coincide, where $i_\gamma\colon X_\gamma\to
X_\lambda$ is the canonical map. Since each map in $\cali$
is a monomorphism in $\E$, and monomorphisms are preserved under
coproducts, pushouts, and transfinite compositions, it follows that
$i_\gamma$ is a monomorphism, and hence $f=g$.
Finally, recall that, by definition, $K$ is $\lambda$-small if
(\ref{small}) is an isomorphism, which by the above is equivalent to
being surjective. This is precisely the definition of $K$ being
$\lambda$-compact.
\end{rem}

\begin{proposition}\label{celular} The exterior model category $\E$ is cellular.
\end{proposition}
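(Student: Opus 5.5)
Following Hirschhorn's definition of a cellular model category (Def.~12.1.1 in \cite{hirsch0}), we must check three conditions for the cofibrantly generated structure of Theorem~\ref{modelosimp} with generating sets $\calit$ and $\calj$:
\begin{itemize}
\item[(a)] the domains and codomains of the maps in $\calit$ are compact;
\item[(b)] the domains of the maps in $\calj$ are small relative to $\calit$;
\item[(c)] cofibrations are effective monomorphisms.
\end{itemize}

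Condition (b) is immediate. The domains of $\calj$ are the exterior disks $\bdisk^n$, and by \cite[Prop.~4.2]{calpiher} (as already used in the proof of Theorem~\ref{modelosimp}) they are $\omega$-small relative to $\calit$.

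For (a) we use Remark~\ref{observa}. Since the maps of $\calit$ are monomorphisms, compactness relative to $\cel(\calit)$ is equivalent to smallness relative to $\cel(\calit)$. So it suffices to show that the objects $\bsph^{n-1}$ and $\bdisk^n$ are small relative to $\calit$. This is again a particular instance of \cite[Prop.~4.2]{calpiher}. If that reference only covers the domains, we argue directly. By Theorem~\ref{expo}, a map $\N\timess D^n\to X_\lambda$ corresponds to a map $D^n\to\map_\E(\N,X_\lambda)$. Because $D^n$ is compact, its image meets only finitely many cells at each stage, in the spirit of the classical argument. Thus the map factors through some $X_\beta$, and the exterior condition persists because exterior sets of a cell complex restrict to exterior sets of each stage.

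Condition (c) is the main obstacle. Every cofibration is a retract of a relative $\calit$-cell complex, and effective monomorphisms are closed under retracts. It therefore suffices to show that every $f\colon A\to X$ in $\cel(\calit)$ is an effective monomorphism, that is, $f$ is the equalizer of the two maps $X\rightrightarrows X\sqcup_A X$. We first compute this equalizer in $\E$. Its underlying space is the equalizer in $\topo$, which is $A$ viewed as a subspace of $X$, since $f$ is a closed inclusion of compactly generated spaces. Its externology is the relative externology induced from $X$, because limits in $\E$ carry the initial externology \cite[Theorem 3.3]{calpiher}. It then remains to check that $f$ is an \emph{exterior embedding}: the given externology on $A$ coincides with the relative one, i.e.\ every exterior set $E$ of $A$ is of the form $E'\cap A$ for some exterior set $E'$ of $X$.

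We verify this first for a single pushout along $\coprod\bsph^{n-1}\to\coprod\bdisk^n$. The externology of the pushout consists of the open sets whose preimages are exterior in $A$ and in $\coprod\bdisk^n$. Given an exterior set $E\subseteq A$, we extend it to $E'$ by adding, in each disk $\{m\}\times D^n$ whose attaching image on the boundary lies in $E$, a collar-type open neighbourhood (for instance the cone on the preimage of $E$ from the center, minus the center). This shows that exterior embeddings are preserved by pushouts. For transfinite compositions we extend $E$ stage by stage, taking unions at limit ordinals. The delicate point is to keep the extension open and exterior over the external product $\N\timess D^n$. Here we use that exterior sets of $\N\timess D^n$ are exactly those containing $\{m\ge k\}\times D^n$ for some $k$, as $D^n$ is compact. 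We then finish by combining the closure of effective monomorphisms under retracts with (a) and (b).
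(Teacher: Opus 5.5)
Your proposal is correct and follows the paper's proof: compactness and smallness come from \cite[Prop.~4.2]{calpiher} together with Remark~\ref{observa}, and the effective monomorphism condition comes from cofibrations being retracts of relative $\calit$-cell complexes, which are inclusions. The paper simply asserts that these cell complexes are inclusions, whereas you verify the genuinely exterior part, namely that $A$ carries the relative externology, by extending exterior sets across cell attachments and taking unions at limit stages. Two fixes are needed. First, for the cofinitely many $m$ whose boundary sphere maps into $E$, you must add the whole disk $\{m\}\times D^n$ rather than a punctured cone, since otherwise the extension is not exterior in $\N\timess D^n$. Second, your fallback argument in (a) is unnecessary, and as written it would itself need this relative-externology property to make the factored map exterior.
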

\begin{proof}
In view of Remark~\ref{observa} and \cite[Def.~12.1.1]{hirsch0}, it remains to verify that the domains and codomains of the generating cofibrations are compact relative to $\cell(\calit)$ for some cardinal, and that cofibrations are effective monomorphisms.

The first condition follows from \cite[Prop.~4.2]{calpiher}, which implies that the domains and codomains of the maps in $\calit$ are $\omega$-small relative to $\cell(\calit)$. Hence they are compact relative to $\cell(\calit)$.

For the second, observe that each map in $\calit$ is an inclusion, and
that inclusions are effective monomorphisms in $\E$. Since cofibrations
are retracts of relative $\calit$-cell complexes, they are also
inclusions and thus effective monomorphisms.
\end{proof}

\begin{proposition}\label{celpro} 
The exterior model category $\E$ is right proper.
\end{proposition}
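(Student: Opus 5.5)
The plan is to reduce right properness to a statement about exterior homotopy groups of pullbacks along fibrations, mirroring the classical argument for $\topo$. Consider a pullback square
$$
\xymatrix{
X\times_Y Z \ar[r]^(.6){g'}\ar[d]_{p'} & Z\ar[d]^{p}\\
X\ar[r]^{g} & Y}
$$
in $\E$, with $p$ a fibration and $g$ an exterior weak equivalence. We must show that $g'$ is an exterior weak equivalence. Since $\E$ is complete, the pullback exists and is computed on the underlying spaces. Fibrations are exactly the maps in $\rlp(\calj)$, and by Theorem~\ref{expo} this is equivalent to the following: for every $n\ge0$, the induced map $\map_\E(\N,Z)\to\map_\E(\N,Y)$ has the homotopy lifting property with respect to the disks $D^n$, i.e.\ it is a Serre fibration of topological spaces. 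Indeed, a lift in a square from $\bdisk^n\to\bcil^n$ is adjoint to a lift for $D^n\to D^n\times I$ against $\map_\E(\N,-)$.

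The key observation is that the functor $\map_\E(\N,-)\colon\E\to\topo$ is a right adjoint, by Theorem~\ref{expo} (its left adjoint is $\N\timess-$). Hence it preserves pullbacks, so $\map_\E(\N,X\times_Y Z)\cong\map_\E(\N,X)\times_{\map_\E(\N,Y)}\map_\E(\N,Z)$. Moreover, by Definition~\ref{pgrupo} and Remark~\ref{homoequi}(i), an exterior map $f$ is an exterior weak equivalence precisely when $\map_\E(\N,f)$ induces a bijection on $\pi_0$ and isomorphisms on all $\pi_n$ at every basepoint $\alpha\in\map_\E(\N,-)$. Here $\pi_0\map_\E(\N,X)=[\N,X]$ because $\N\timess I$ gives the exterior homotopies. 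In other words, $f\in W$ if and only if $\map_\E(\N,f)$ is a classical weak homotopy equivalence.

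Right properness of $\E$ therefore follows from right properness of Quillen's model structure on $\topo$. Applying $\map_\E(\N,-)$ to the square gives a pullback of a weak homotopy equivalence along a Serre fibration, which is a weak homotopy equivalence. Hence $g'$ is an exterior weak equivalence. Alternatively, one can argue directly by comparing the long exact sequences of the two Serre fibrations with common fibre, applying the five lemma, with care at the $\pi_0$ and $\pi_1$ levels.

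The main obstacle is justifying cleanly that $\map_\E(\N,-)$ preserves pullbacks with the correct topology, and that exterior homotopy groups coincide with the homotopy groups of this space, including at the level of $\pi_0$. Both facts should follow from the exponential law in Theorem~\ref{expo}, using that $\N$ carries the cocompact externology, combined with the identification already recorded in Definition~\ref{pgrupo}.
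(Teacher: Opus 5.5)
Your argument is correct, but it takes a different route from the paper. The paper's proof is essentially one line. The terminal object of $\E$ is a point with the trivial externology. Each $\bdisk^n\to\bcil^n$ admits an exterior retraction, so every map $X\to *$ lies in $\rlp(\calj)$ and every object of $\E$ is fibrant. Right properness then follows from the general fact that a model category with all objects fibrant is right proper \cite[Cor.~13.1.13]{hirsch0}.

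You instead show that the exterior model structure is right-transferred along the adjunction $\N\timess-\dashv\map_\E(\N,-)$. The right adjoint preserves pullbacks and detects fibrations, by adjointness on $\calj$. It also detects weak equivalences, via the identifications $[\N,X]=\pi_0\map_\E(\N,X)$ and $\pi_n^e(X,\alpha)\cong\pi_n(\map_\E(\N,X),\alpha)$. Both identifications come from Theorem~\ref{expo}, because $\N$ carries the cocompact externology. Right properness is then inherited from Quillen's structure on $\topo$.

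The obstacle you flag at the end is not a real one. Pullback preservation is automatic for a right adjoint, and the paper already uses this adjunction in the proof of Theorem~\ref{modelosimp}.

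What each route buys:
\begin{itemize}
\item The paper's route is shorter. It also yields the stronger intermediate fact that all objects of $\E$ are fibrant, which the paper later contrasts with $\E_*$ in Remark~\ref{fibrant}.
\item Your route gives the more structural statement that $\map_\E(\N,-)$ creates weak equivalences and fibrations. This is exactly the mechanism the paper invokes later in Lemma~\ref{translation}. It also recovers fibrancy of all objects as a byproduct, since $\map_\E(\N,*)$ is a point.
\end{itemize}
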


\begin{proof}
In view of \cite[Cor.~13.1.13]{hirsch0} right properness follows from the fact that every object $X\in \E$ is fibrant (cf. Remark \ref{fibrant}). Indeed, the terminal object of $\E$ is a point with the trivial externology. Since every generating trivial cofibration  is an exterior strong deformation retract, the unique map $X \to *$ has the right lifting property with respect to all of them.
\end{proof}

\begin{rem}\label{noleft} 
 However, the exterior model category $\E$ fails to be left proper as the following example shows.

Consider the pushout
$$
\xymatrix{
\varnothing \ar[r] \ar@{^{(}->}[d] &
\ast \ar[d]
\\
\N \ar[r]^-j &
\N\sqcup \ast,
}
$$
where $\ast$ denotes the one-point space endowed with the total
externology.

The map
$
\varnothing\longrightarrow *
$
is an exterior weak equivalence (see (ii) of Remark \ref{homoequi}) while the left vertical map is the generating cofibration $\bsph^{-1}\to\bdisk^0$. However, 
$j$ 
is not an exterior weak equivalence since $j_*\colon [\N,\N]\to[\N,\N\sqcup *]$ is not a bijection. Indeed, let
$
\alpha\colon\N\to\N\sqcup\ast
$
be any discrete ray whose image contains $*$ and write $\alpha(m)=*$. Then, 
$\alpha$ cannot be exteriorly
homotopic to any map whose image is contained in $\N$. Indeed, if
$
H\colon\N\timess I\to\N\sqcup\ast
$
were such a homotopy, its restriction to the connected subset
$\{m\}\times I$ would be constant with value $*$  contradicting the fact that
$
H(m,1)\in\N
$.
Hence $[\alpha]$ does not belong to the image of $j_*$.
\end{rem}

\section{The pointed exterior category {\bf E}$_*$}

According to \S\ref{prespec}, in order to obtain a satisfactory stabilization, it is necessary to work in a pointed category, which is not the case for $\E$. In fact, the initial object of this category is the empty set with the only topology and externology, whereas as observed above, the terminal object is a point with the trivial externology. To overcome this issue, we consider the category $\E_*$ of \emph{retractive exterior spaces over $\ray$}. An object is a diagram
$$
\ray\stackrel{s}{\longrightarrow}X\stackrel{r}{\longrightarrow}\ray
$$
such that $r\circ s=\id_{\ray}$, and morphisms are the obvious commutative diagrams. 
Equivalently, $\E_*$ is naturally identified with the slice category over $\id_{\ray}$ of the coslice category $\ray\downarrow\E$.
This category is not unduly restrictive as we will see in subsequent sections. 

\begin{proposition}\label{oju}
The category $\E_*$ inherits from $\E$ a cellular,  right proper, cofibrantly generated model structure in which a map is a fibration, cofibration, or weak equivalence if and only if its image under the forgetful functor $\epun \to \eno$ is a fibration, cofibration, or weak equivalence, respectively.
\end{proposition}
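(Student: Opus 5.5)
This is an instance of the standard transfer result for slice and coslice categories. For a model category $\quic$ and an object $Z$, the categories $Z\downarrow\quic$ and $\quic\downarrow Z$ carry model structures in which fibrations, cofibrations and weak equivalences are created by the forgetful functor (Hirschhorn, Thm.~7.6.5). Iterating, $\E_*=(\ray\downarrow\E)\downarrow(\ray\xrightarrow{\id}\ray)$ gets such a structure in two steps: first pass to $\E^{\ray}=\ray\downarrow\E$, then to the slice over the object $\id_\ray$. In both steps the three classes are detected in $\E$, so the model structure exists exactly as stated.

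For cofibrant generation I will use Hirschhorn, Thm.~11.6.1/Prop.~11.6.3 (under and over categories inherit cofibrant generation). Concretely, I propose generating sets
\[
\calit_*=\{\ray\sqcup i\}_{i\in\calit},\qquad \calj_*=\{\ray\sqcup j\}_{j\in\calj},
\]
where each map is regarded as a map over $\ray$ by taking, for a map $u\colon A\to B$ in $\calit$ or $\calj$ and each exterior map $B\to\ray$, the map $\ray\sqcup A\to\ray\sqcup B$. Indexing over all maps $B\to\ray$ still gives a set, since $\Hom_\E(B,\ray)$ is a set. Lifting problems in $\E_*$ against these maps correspond exactly to lifting problems in $\E$ against $u$, compatibly with the retraction. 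Hence $\rlp(\calit_*)$ and $\rlp(\calj_*)$ are the maps whose underlying maps lie in $\rlp(\calit)$ and $\rlp(\calj)$.

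Right properness is immediate. Pullbacks in $\E_*$ are computed in $\E$: for a cospan over $\ray$ under $\ray$, the underlying limit is the pullback in $\E$, and the sections and retraction are induced. Since fibrations and weak equivalences are detected by the forgetful functor, Proposition~\ref{celpro} gives the result. Alternatively, every object of $\E_*$ is fibrant, because $\id_\ray$ is the terminal object and every map $X\to\ray$ of exterior spaces is a fibration: the generating trivial cofibrations are strong deformation retracts over the base, so the lift $\bcil^n\to X$ can be built by composing the retraction $\bcil^n\to\bdisk^n$ with the given map.

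For cellularity I check Hirschhorn, Def.~12.1.1, for $\calit_*$.
\begin{enumerate}
\item Relative $\calit_*$-cell complexes in $\E_*$ have underlying maps that are relative $\calit$-cell complexes in $\E$. Indeed, the forgetful functor from the over category creates colimits, and pushouts under $\ray$ of maps of the form $\ray\sqcup u$ agree with the corresponding pushouts of $u$ in $\E$.
\item The domains and codomains $\ray\sqcup\bsph^{n-1}$ and $\ray\sqcup\bdisk^n$ must be compact relative to $\calit_*$. Maps out of $\ray\sqcup K$ in $\E_*$ correspond to maps $K\to X$ in $\E$ over the fixed map to $\ray$. So compactness reduces to that of $K$ in $\E$ (Proposition~\ref{celular} and Remark~\ref{observa}), using that the colimit is computed underlying.
\item Cofibrations must be effective monomorphisms. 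An equalizer or pushout diagram in $\E_*$ is computed underlying, so the effective-monomorphism property transfers from $\E$.
\end{enumerate}
The main obstacle is the bookkeeping in step 1: colimits in $\E_*$ are the colimits of the underlying diagram in $\E$ with $\ray$ amalgamated. I will check that attaching $\ray\sqcup u$ along $\ray\sqcup A\to X$ gives the same underlying map as the pushout of $u$ along $A\to X$ in $\E$, and this holds because the $\ray$ summand maps by the section.
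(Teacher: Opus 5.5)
Your main line matches the paper's proof. The paper exhibits $\E_*$ as a slice of the coslice $\ray\downarrow\E$ and cites Hirschhorn's results on over- and undercategories for the model structure, cofibrant generation, cellularity and right properness. You unpack those results instead, and the unpacking is sound. Your generating sets are exactly those of Remark~\ref{nuevacof}, and indexing over all maps $B\to\ray$ correctly makes the lifting problems in $\E_*$ match those in $\E$. Your main right-properness argument is also correct: pullbacks in $\E_*$ are computed in $\E$, fibrations and weak equivalences are detected there, and $\E$ is right proper by Proposition~\ref{celpro}. One reference correction: Thm.~11.6.1 of Hirschhorn's book, as far as I recall, concerns diagram categories. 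The cofibrant-generation and cellularity statements for slices and coslices are in his separate paper on over- and undercategories, which is what the paper cites.

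Your ``alternative'' argument for right properness, however, is false and should be deleted: not every object of $\E_*$ is fibrant. The terminal object of $\E_*$ is $\ray\xrightarrow{\id}\ray\xrightarrow{\id}\ray$, so $X$ is fibrant exactly when its retraction $r\colon X\to\ray$ is a fibration in $\E$. Take a lifting square with top map $a\colon\bdisk^n\to X$, bottom map $g\colon\bcil^n\to\ray$, and inclusion $j\colon\bdisk^n\to\bcil^n$. Your candidate lift $a\circ\rho$, with $\rho\colon\bcil^n\to\bdisk^n$ the retraction, satisfies $r\circ a\circ\rho=g\circ j\circ\rho$. This is generally different from $g$, because $g$ is arbitrary and the deformation retraction does not respect it. The trick works in $\E$ only because the target there is a point, so the lower triangle is vacuous. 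Remark~\ref{fibrant} gives an explicit counterexample: $X=\ray$ with section $t\mapsto t+2$ and the zig-zag retraction.
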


\begin{proof}
By \cite[Thms.~1.19,~2.20]{hirsch1}  any slice or coslice category of a model category inherits a model structure as stated. Moreover, by \cite[Thms.~1.23,~1.24,~2.23,~2.24]{hirsch1}, cellularity and right properness are also preserved under the slice and coslice constructions.
\end{proof}

\begin{rem}\label{nuevacof}
The set of generating cofibrations and trivial cofibrations of the induced model structure on $\E_*$ can be explicitly given as follows:  for each map $i\colon A\to B$ in $\mathcal I$
and each exterior map $q\colon B\to\ray$, consider the retractive objects
$$
\left(\ray \hookrightarrow A\sqcup\ray \xrightarrow{\,q\circ i\,\sqcup\,\id\,}
\ray\right)
\quad\text{and}\quad
\left(\ray \hookrightarrow B\sqcup\ray \xrightarrow{\,q\,\sqcup\,\id\,}
\ray\right),
$$
where the section is the inclusion of the $\ray$-summand. The induced map
$$
A\sqcup\ray\longrightarrow B\sqcup\ray
$$
is then a generating cofibration of the global (resp. exterior) model
structure on $\E_*$. The generating trivial cofibrations are defined
analogously.
\end{rem}

\begin{rem}\label{fibrant}
In contrast to $\E$, where every object is fibrant (see the proof of Theorem~\ref{celpro}), not every object of $\E_*$ is fibrant: let $X=\ray$ with section
$i(t)=t+2$ and retraction
$$
p(t)=
\begin{cases}
t, & 0\le t\le1,\\
2-t, & 1\le t\le2,\\
t-2, & t\ge2.
\end{cases}
$$
Consider the commutative square
$$
\xymatrix{
\bdisk^0 \ar[r]^{a} \ar[d] &
X \ar[d]^{p} \\
\bcil^0 \ar[r]_{g} &
\ray,
}
$$
where $a(n)=n$, $g(n,s)=p(n)$ for $n\neq1$, and
$g(1,s)=1+s$. If this square admitted a lifting $h$, writing
$\lambda(s)=h(1,s)$ we would have a continuous map
$\lambda\colon I\to\ray$ such that
$$
\lambda(0)=1
\quad\text{and}\quad
p(\lambda(s))=1+s.
$$
This is impossible since, for $s>0$ sufficiently small, continuity
implies that $\lambda(s)$ is close to $1$, whereas
$p(t)\le1$ for all $t$ sufficiently close to $1$. Hence $p$ is not an
exterior fibration, and therefore the corresponding object of
$\E_*$ is not fibrant.
\end{rem}

\begin{definition}\label{tensorycotensor}
Let
$
X\in \E_*
$
and  $K\in \catss^f$.
\begin{enumerate}

\item The \emph{tensor} of $X$ by $K$, denoted by $X{\otimes_\ray }K\in\epun$, is defined as the pushout in $\E$ of the maps $
\ray \leftarrow \ray\otimes K \to X\otimes K$  induced by the ray of $X$ and the projection $\ray\timess| K| \to \ray$. That is,
$$
\xymatrix{
\ray\otimes K \ar[r]^{s_X\otimes \id_K} \ar[d] &
X\otimes K \ar[d] \\
\ray \ar[r] &
X{\otimes_\ray} K,
}
$$
with the evident ray and projection over $\ray$.

\item The \emph{cotensor} of $X$ by $K$, denoted by $X^K_\ray\in\epun$,  is defined as the pullback in $\E$ of the maps in $\ray\to \ray^K\leftarrow X^K$ induced again by the ray of $X$ and the constant map at any point of $\ray$. That is,
$$
\xymatrix{
X^K_\ray \ar[r] \ar[d] &
X^K \ar[d] \\
\ray \ar[r] &
\ray^K,
}
$$
with the natural ray and projection over $\ray$.

\item Finally, for $X,Y\in \epun$, the \emph{simplicial mapping space} $\Map_{\epun}(X,Y)$ is the simplicial set defined by   
$$
\Map_{\E_*}(X,Y)_n=\Hom_{\epun}(X\otimes_\ray \Delta[n],\,Y),
$$
which is pointed by the zero morphism
$$
X\longrightarrow \ray \longrightarrow Y.
$$
Equivalently, it is the simplicial subset of $\Map_\E(X,Y)$ consisting of those simplices
$$
f\colon X\otimes \Delta[n]\longrightarrow Y
$$
in $\E$ which are exterior maps over and under $\ray$.
\end{enumerate}
\end{definition}

\begin{theorem}\label{sinpuntear}
With the tensor, cotensor and simplicial mapping spaces defined above,
both the exterior and the global model structures on $\E_*$ are simplicial
model categories.
\end{theorem}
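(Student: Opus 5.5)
The plan is to check the standard criterion for simplicial model categories, namely \cite[Prop.~9.3.7(3)]{hirsch0}: the structure is simplicial once $\epun$ is a simplicial category with tensors and cotensors satisfying the usual adjunction isomorphisms, and the pushout-product axiom holds. I would proceed in three steps.

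\emph{Simplicial enrichment.} First I verify that $\Map_{\epun}$ is a simplicial category. This is immediate, since it is a simplicial subcategory of $\Map_\E$: maps over and under $\ray$ are closed under the composition described above, because the diagonal $|\Delta[n]|\to|\Delta[n]|\times|\Delta[n]|$ is compatible with the projections to $\ray$.

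\emph{Adjunctions.} Next I establish natural isomorphisms
$$
\Map_{\catss}\bigl(K,\Map_{\epun}(X,Y)\bigr)\cong\Map_{\epun}(X\otimes_\ray K,Y)\cong\Map_{\epun}(X,Y^K_\ray).
$$
For this I use the corresponding isomorphisms already available in $\E$ (Theorem~\ref{expo}). An exterior map $X\otimes_\ray K\to Y$ under and over $\ray$ is, by the pushout defining the tensor, the same as a map $X\timess|K|\to Y$ that satisfies two conditions: it restricts on $\ray\timess|K|$ to $s_Y\circ\mathrm{pr}$, and it commutes with the maps to $\ray$. Under the exponential law this corresponds to a map $X\to Y^{|K|}$ satisfying two matching conditions. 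The first is that its composite with $Y^{|K|}\to\ray^{|K|}$ is the map $X\to\ray\to\ray^{|K|}$ through constants. The second is that the ray goes to constant paths at the ray of $Y$. By the pullback defining the cotensor, this is exactly a map $X\to Y^K_\ray$ in $\epun$. Applying this levelwise with $K\times\Delta[n]$, and using $X\otimes_\ray(K\times L)\cong(X\otimes_\ray K)\otimes_\ray L$, yields the simplicial isomorphisms. The associativity holds because $\timess$ with compact CW factors is associative, and the pushouts commute as colimits.

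\emph{Pushout-product axiom.} Finally, since both model structures on $\epun$ are cofibrantly generated and cofibrations and weak equivalences are created by the forgetful functor $U\colon\epun\to\E$ (Proposition~\ref{oju}), it suffices to check the axiom for the generating maps $i_+\colon A\sqcup\ray\to B\sqcup\ray$ of Remark~\ref{nuevacof} against $\partial\Delta[m]\to\Delta[m]$ and $\Lambda^r[m]\to\Delta[m]$. The key observation is that for an object of the form $A\sqcup\ray$ with section the inclusion,
$$
(A\sqcup\ray)\otimes_\ray K\cong (A\otimes K)\sqcup\ray ,
$$
because the pushout collapses exactly the summand $\ray\otimes K$. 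Since pushouts in $\epun$ along maps under $\ray$ are computed in $\E$, the pushout-product $i_+\square v$ computed in $\epun$ is $(i\square v)\sqcup\id_\ray$. Here $i\square v$ is the pushout-product in $\E$, and it is a (trivial) cofibration by Theorem~\ref{modelosimp}. Coproduct with $\id_\ray$ preserves (trivial) cofibrations in $\E$, as it is a pushout along $\varnothing\to\ray$. Hence $U(i_+\square v)$ is a (trivial) cofibration, and so is $i_+\square v$.

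The main obstacle is that this argument applies verbatim to the exterior structure, but the "global" structure mentioned in the statement may have different generators. If its generating sets are also of the form $A\sqcup\ray\to B\sqcup\ray$ with $A\to B$ generating in the corresponding structure on $\E$, the same computation applies. The one point to verify carefully is that the passage from Theorem~\ref{modelosimp} and the identification $(A\sqcup\ray)\otimes_\ray K\cong(A\otimes K)\sqcup\ray$ are natural in the retraction $q$. This holds because the projection to $\ray$ plays no role in forming the pushouts.
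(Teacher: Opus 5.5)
Your argument is correct, but it takes a different route from the paper. The paper proves the statement in one line: it cites Quillen's result \cite[II.2, Prop.~6]{qui} that over- and undercategories of a simplicial model category inherit a simplicial model structure with the induced tensor and cotensor. Applied twice, this gives the retractive category. It works for any simplicial model structure on $\E$, so the exterior and global cases are handled uniformly, with no reference to generators.

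You instead verify the axioms directly. You obtain the adjunction isomorphisms from the exterior exponential law of Theorem~\ref{expo}. You check the pushout-product axiom on the explicit generators of Remark~\ref{nuevacof}, via $i_+\square v=(i\square v)\sqcup\id_\ray$.

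Your route buys an explicit check that the induced tensor and cotensor are exactly the formulas of Definition~\ref{tensorycotensor}, which the citation leaves implicit. It costs dependence on cofibrant generation and on knowing the generators, which is why your global case stays conditional.

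That conditional can be removed. Take any map $u\colon X\to Y$ in $\E_*$ and $v\colon K\to L$, and set $P=(Y\otimes K)\sqcup_{X\otimes K}(X\otimes L)$. Using $s_Y=u\,s_X$, one checks that $u\square_\ray v$ is the cobase change of the $\E$-pushout-product $u\square v$ along $P\to P\sqcup_{\ray\otimes L}\ray$. Hence SM7 in $\E_*$ follows from SM7 in $\E$ for any simplicial model structure there, which is essentially the content of Quillen's proposition.

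One small elision in your adjunction step: you pass from $\Hom_{\E_*}(X\otimes_\ray K,Y)\cong\Hom_{\catss}\bigl(K,\Map_{\E_*}(X,Y)\bigr)$ for $K=\Delta[n]$ to general finite $K$. This needs either that $X\otimes_\ray-$ turns the simplex decomposition of $K$ into a colimit in $\E_*$, or that the corresponding isomorphism in $\E$ restricts to maps under and over $\ray$. It is routine but should be stated.
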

\begin{proof}
This follows from the fact that the slice and coslice categories of a simplicial model category, and hence the associated retractive category, inherit a simplicial model structure with respect to the induced tensor and cotensor, see \cite[II.2, Prop.~6]{qui}.
\end{proof}

In particular:

\begin{corollary}\label{coropunsin}
(i) For any $X,Y \in \E_*$ and any $K \in \catss^f$, there are natural
isomorphisms of simplicial sets
$$
\Map_{\E_*}(X\otimes_\ray K, Y)
\cong
\Map_{\catss}\bigl(K,\Map_{\E_*}(X,Y)\bigr)
\cong
\Map_{\E_*}(X, Y^{K}_\ray).
$$

(ii) For any $X \in \E_*$ and any $K,L \in \catss^f$, there are natural
isomorphisms in $\E_*$
$$
X \otimes_\ray \Delta[0] \cong X \cong X^{\Delta[0]}_\ray,
\quad
X \otimes_\ray (K \times L) \cong (X \otimes_\ray K)\otimes_\ray L,
\quad
X^{K \times L}_\ray \cong (X^K_\ray)^L_\ray.
$$

(iii) For any $K \in \catss^f$, the following adjunction forms a Quillen pair,
$$
{-}\otimes_\ray K \dashv (-)^K_\ray
$$
\end{corollary}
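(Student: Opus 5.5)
The three items are the standard consequences of $\E_*$ being a simplicial model category (Theorem~\ref{sinpuntear}). They can be read off from the axioms of a simplicial category together with the explicit pushout/pullback descriptions in Definition~\ref{tensorycotensor}.

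For (i), I would first fix the level-$n$ identification. By definition,
$$\Map_{\E_*}(X\otimes_\ray K,Y)_n=\Hom_{\E_*}\bigl((X\otimes_\ray K)\otimes_\ray\Delta[n],Y\bigr).$$
The key step is the natural isomorphism
$$(X\otimes_\ray K)\otimes_\ray\Delta[n]\cong X\otimes_\ray(K\times\Delta[n]).$$
I would get it by commuting the pushout defining $\otimes_\ray$ with the functor $-\timess|\Delta[n]|$, which is a left adjoint by Theorem~\ref{expo} and so preserves pushouts. One also uses $(X\timess|K|)\timess|L|\cong X\timess(|K|\times|L|)$ together with $|K\times L|\cong|K|\times|L|$, which holds in compactly generated spaces. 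This isomorphism is exactly the second statement of (ii), so I would prove it first.

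Next I would use the universal property of the pushout. A map out of $X\otimes_\ray(K\times\Delta[n])$ over and under $\ray$ is the same as a map $X\otimes (K\times\Delta[n])\to Y$ that restricts on $\ray\otimes(K\times\Delta[n])$ to the composite of the projection with the ray of $Y$, and that commutes with the projections to $\ray$. Via the unpointed adjunctions recalled at the beginning of the section, $\Map_{\catss}(K,\Map_\E(X,Y))\cong\Map_\E(X\otimes K,Y)$, such a map is an $n$-simplex of $\Map_{\catss}(K,\Map_{\E_*}(X,Y))$, that is, a map $K\times\Delta[n]\to\Map_{\E_*}(X,Y)$. Finiteness of $K$ ensures that simplices of $K\times\Delta[n]$ correspond to maps out of $|K|\times|\Delta[n]|$. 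The second isomorphism in (i) is dual. A map $X\otimes_\ray \Delta[n]\to Y^K_\ray$ under and over $\ray$ corresponds, via the pullback, to a map into $Y^{|K|}$ that lies over $\ray^{|K|}$ compatibly. By Theorem~\ref{expo} this is again a map $X\timess|\Delta[n]|\timess|K|\to Y$ satisfying the same boundary conditions.

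For the rest of (ii), the isomorphism $X\otimes_\ray\Delta[0]\cong X$ follows because $|\Delta[0]|$ is a point, so $\ray\otimes\Delta[0]\to\ray$ is an isomorphism and the pushout collapses to $X$. The dual statement holds for the cotensor. The cotensor associativity $X^{K\times L}_\ray\cong (X^K_\ray)^L_\ray$ follows from the tensor one by Yoneda applied to (i), using that $\Hom_{\E_*}$ is $\Map_{\E_*}$ in degree $0$.

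Part (iii) then follows formally. By (i) in degree $0$ we have $\Hom_{\E_*}(X\otimes_\ray K,Y)\cong\Hom_{\E_*}(X,Y^K_\ray)$, so the two functors form an adjunction. By the pushout-product axiom (SM7), which holds in $\E_*$ by Theorem~\ref{sinpuntear}, applied to a cofibration $u$ in $\E_*$ and the cofibration $\varnothing\to K$, the pushout-product $u\,\square\,(\varnothing\to K)$ is just $u\otimes_\ray K$. Hence $-\otimes_\ray K$ preserves cofibrations and trivial cofibrations, so it is left Quillen.

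The main obstacle is the careful check in (i). One must verify that the exterior structure, not just the topology, is respected: the pushout in $\E$ carries the quotient externology, and the external product $\timess$ must commute with it. I would handle this by always passing through the left adjoint $-\timess|\Delta[n]|$ and never computing externologies directly.
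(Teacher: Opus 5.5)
Your argument is correct. For (iii) it follows the paper, but for (i) and (ii) it takes a different route. The paper gives no separate argument: the corollary is read off ("In particular") from Theorem~\ref{sinpuntear}, whose proof simply invokes Quillen's result that slice, coslice and hence retractive categories of a simplicial model category inherit a simplicial model structure with the induced tensor and cotensor. In that reading, (i) is the defining adjunction of tensors and cotensors, (ii) consists of the standard unit and associativity isomorphisms of a simplicial category, and (iii) is the usual consequence of SM7.

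You instead compute directly from Definition~\ref{tensorycotensor}. The inputs you use are:
\begin{enumerate}
\item $-\timess|L|$ is a left adjoint by Theorem~\ref{expo}, so it commutes with the pushout defining $\otimes_\ray$;
\item $X\timess|K|=X\times|K|_{tr}$ for compact $|K|$, together with $|K\times L|\cong|K|\times|L|$;
\item the universal properties of the defining pushout and pullback, combined with the unpointed adjunctions on $\E$.
\end{enumerate}
This costs more bookkeeping with externologies. In return it establishes something the paper leaves implicit: the abstractly inherited tensor and cotensor really are given by the explicit formulas of Definition~\ref{tensorycotensor}.

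For (iii) your argument is the standard one: SM7 applied to $u$ and $\varnothing\to K$. The reduction of $u\,\square\,(\varnothing\to K)$ to $u\otimes_\ray K$ rests on $B\otimes_\ray\varnothing\cong\ray$, the zero object of $\E_*$, which is worth stating explicitly.

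One small point: Yoneda applied to (i) directly yields $Y^{K\times L}_\ray\cong(Y^L_\ray)^K_\ray$. You need the symmetry $K\times L\cong L\times K$ to reach the stated form $(Y^K_\ray)^L_\ray$.
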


\begin{definition}\label{cilpath}
(i) The \emph{cylinder of $X \in \E_*$} is defined as
$$
\cyl(X)= X \otimes_\ray \Delta[1].
$$
Since $\partial\Delta[1]\cong \Delta[0]\sqcup \Delta[0]$ in $\catss^f$, there is a natural isomorphism
$$
X \otimes_\ray \partial\Delta[1] \cong X \sqcup_{\ray} X,
$$
where the coproduct is taken in the retractive category $\E_*$. 
Thus, the map
$$
X \otimes_\ray \partial\Delta[1] \longrightarrow X \otimes_\ray \Delta[1],
$$
induced by the inclusion $\partial\Delta[1]\hookrightarrow \Delta[1]$,
exhibits $\cyl(X)$ as a functorial cylinder object for $X$, and hence as a good cylinder object for cofibrant objects in $\E_*$.

\smallskip

(ii) Dually, the \emph{path space} of $X$ is defined as
$$
\mathrm{Path}(X)= X^{\Delta[1]}_\ray.
$$
Since $\partial\Delta[1]\cong \Delta[0]\sqcup \Delta[0]$ in $\catss^f$, there is a natural isomorphism
$$
X^{\partial\Delta[1]}_\ray \cong X \times_{\ray} X,
$$
where the pullback is taken in $\E_*$. It follows that the map
$$
X^{\Delta[1]}_\ray \longrightarrow X^{\partial\Delta[1]}_\ray,
$$
induced by the inclusion $\partial\Delta[1]\hookrightarrow \Delta[1]$, exhibits $\mathrm{Path}(X)$ as a functorial  path object for $X$, and hence as a good path object for any fibrant object $X\in\epun$.
\end{definition}

\begin{proposition}\label{rectificacion}
Let $X,Y\in \E_*$, with $X$ cofibrant and $Y$ fibrant. Then the natural map
\begin{equation}\label{map}
 [X,Y]_{\E_*}\stackrel{\cong}{\longrightarrow} [X,Y]^\ray
\end{equation}
is a bijection. 
\end{proposition}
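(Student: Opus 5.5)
Since $X$ is cofibrant and $Y$ is fibrant in $\E_*$, the homotopy classes $[X,Y]_{\E_*}$ can be computed with any fixed good cylinder object for $X$. I would use the functorial cylinder $\cyl(X)=X\otimes_\ray\Delta[1]$ of Definition~\ref{cilpath}. Because $X$ is cofibrant, $X\sqcup_\ray X\to\cyl(X)$ is a cofibration, by the pushout-product axiom or Corollary~\ref{coropunsin}(iii). The map $\cyl(X)\to X$ is a weak equivalence, since $X\otimes_\ray-$ is left Quillen on cofibrant $X$ and $\Delta[1]\to\Delta[0]$ is a weak equivalence between cofibrant simplicial sets. So $\cyl(X)$ is a good cylinder, and by standard model category theory (e.g.\ Hirschhorn, Prop.~7.4.10), left homotopy between maps $X\to Y$ is an equivalence relation that can be detected by this single cylinder. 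The map (\ref{map}) is the identity on representatives, so it is well defined and surjective once we know that a left homotopy through $\cyl(X)$ is the same thing as an exterior homotopy relative to $\ray$ in the sense of \S\ref{prelimi2}. The whole proof reduces to this identification.

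\textbf{Identification of maps out of the cylinder.} By Definition~\ref{tensorycotensor}, $\cyl(X)$ is the pushout in $\E$ of $\ray\leftarrow\ray\timess I\to X\timess I$, using $|\Delta[1]|=I$. A map $\cyl(X)\to Y$ in $\E_*$ therefore corresponds to an exterior map $H\colon X\timess I\to Y$ satisfying two conditions:
\begin{itemize}
\item[(a)] $H(s_X(t),u)=s_Y(t)$ for all $t\in\ray$ and $u\in I$;
\item[(b)] $r_Y\circ H=r_X\circ\mathrm{pr}_X$.
\end{itemize}
Condition (a) comes from the universal property of the pushout. Condition (b) is compatibility with the projections over $\ray$, since the projection of $\cyl(X)$ is induced by $r_X$ on $X\timess I$ and by the identity on $\ray$. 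Conditions (a) and (b) say exactly that each $H_u$ is a morphism of $\E_*$, i.e.\ under and over $\ray$. Restricting along the two ends $X\sqcup_\ray X\to\cyl(X)$ recovers $H_0$ and $H_1$. Hence left homotopies via $\cyl(X)$ are precisely homotopies relative to $\ray$ through retractive maps, which is the meaning of $[X,Y]^\ray$ in the statement.

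\textbf{Main obstacle.} The delicate point is that $[X,Y]^\ray$ might be intended with homotopies only \emph{under} $\ray$, as in \S\ref{prelimi2}, without condition (b). Then injectivity needs an extra argument: given an exterior homotopy $H$ under $\ray$ between maps $f,g$ in $\E_*$, I must produce one that is also over $\ray$. I would attempt this with the fibrancy of $Y$. The map $r_Y\circ H$ is a homotopy under $\ray$ from $r_X$ to itself. Because $r_Y$ is a fibration and $X\sqcup_\ray X\to\cyl(X)$ is a cofibration, a relative homotopy lifting argument lets me replace $H$ by a homotopy covering $r_X\circ\mathrm{pr}$. For this I would use a lift in the square formed by $(f,g)$ on $(X\sqcup_\ray X)\otimes_\ray\Delta[1]\cup\cyl(X)\otimes_\ray\{0\}$ against $r_Y$, together with a null-homotopy of the loop $r_Y\circ H$ in $\map_\E(X,\ray)$ rel ends. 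That null-homotopy exists because $\ray$ is exteriorly contractible rel endpoints via a straight-line homotopy, which is exterior because $\ray$ has the cocompact externology. Finally, since the path object $\mathrm{Path}(Y)$ is good for fibrant $Y$, the same identification applied to right homotopies confirms that the relation is an equivalence relation, which completes the bijection.
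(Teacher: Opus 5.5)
Your identification of maps $\cyl(X)\to Y$ in $\E_*$ with homotopies that are both under and over $\ray$ is correct. Your injectivity sketch in the last paragraph is essentially the paper's argument: take the straight-line homotopy from $r_Y\circ H$ to $r_X\circ\mathrm{pr}$, relative to the ends and the ray, and lift it against the fibration $r_Y$ along a pushout-product trivial cofibration. Note that this square lives in $\E$, not $\E_*$, since $H$ is not over $\ray$.

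\textbf{The gap is surjectivity.} In the paper, $[X,Y]^\ray$ is the set of \S\ref{prelimi2} for the undercategory $\E^\ray$. Its representatives are exterior maps $f$ with $f\circ s_X=s_Y$, and in general $r_Y\circ f\neq r_X$. So the map $[X,Y]_{\E_*}\to[X,Y]^\ray$ is not ``the identity on representatives''. Your last paragraph relaxes only the homotopies to be under $\ray$ while tacitly keeping the maps retractive, and then claims only injectivity needs extra work. The under-only reading of the maps is what the paper relies on later. Theorem~\ref{white} replaces a homotopy inverse $g$ that is merely under $\ray$ by a homotopic $g'$ in $\E_*$. Proposition~\ref{homolup} identifies $[\bsphc^n,-]^\ray$, which by Remark~\ref{continuog} consists of maps under $\ray$, with $[\bsphc^n,-]_{\E_*}$. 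Under your primary reading the statement is nearly tautological and does not support these uses.

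\textbf{The missing step is one more relative lifting.}
\begin{itemize}
\item Let $f$ be under $\ray$. Since $r_Yf$ and $r_X$ agree on $s_X(\ray)$, the map $K(x,t)=(1-t)\,r_Yf(x)+t\,r_X(x)$ is a homotopy relative to $s_X(\ray)$.
\item $K$ is exterior: $\ray$ carries the cocompact externology, and $K(x,t)\le m$ forces $r_Yf(x)\le m$ or $r_X(x)\le m$.
\item The inclusion $X\timess\{0\}\cup s_X(\ray)\timess I\hookrightarrow X\timess I$ is the pushout-product of the cofibration $s_X$ with $\Delta[0]\hookrightarrow\Delta[1]$. Hence it is a trivial cofibration in $\E$, and $r_Y$ is a fibration.
\item Form the square with top map $f$ on $X\timess\{0\}$ and $s_Y$ on the ray part, and bottom map $K$. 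A lift $L$ exists.
\item $L$ is a homotopy under $\ray$ from $f$ to $\widetilde f=L(-,1)$, and $r_Y\widetilde f=r_X$, so $\widetilde f$ is a morphism of $\E_*$.
\end{itemize}
With this added to your injectivity argument, the proof is complete and coincides with the paper's.
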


\begin{proof}
Note first that (\ref{map}) is well defined. Indeed, if $f,g\colon X\to Y$
are homotopic in $\E_*$ then, see Definition \ref{cilpath}, there exists a morphism
$$
F\colon X\otimes_\ray \Delta[1]\longrightarrow Y
$$
in $\E_*$ restricting to the two copies of $X$ inside
$
X\otimes_\ray \partial\Delta[1]\cong X\sqcup_\ray X
$.
Composing $F$ with the map
$
q\colon X\timess I=X\otimes \Delta[1]\to X\otimes_\ray \Delta[1]
$
yields an exterior homotopy
$$
H=F\circ q\colon X\timess I\longrightarrow Y
$$
from $f$ to $g$ under $\ray$.

To prove surjectivity of (\ref{map}), let $f\colon X\to Y$ be an exterior map under $\ray$ and denote by  $i$ the sections of $X$ and $Y$ from the ray, and by $p$ their projections onto it. 

Since  $pf=p$ on $i(\ray)$ the map
$$
K(x,t)=(1-t)\,pf(x)+t\,p(x)
$$
defines an exterior homotopy from $pf$ to $p$, relative to $i(\ray)$.

Consider the exterior inclusion
$$
j\colon
X\timess\{0\}\sqcup_{i(\ray)\timess\{0\}}i(\ray)\timess I
\longrightarrow X\timess I.
$$
Under the natural identifications
$$
X\otimes\Delta[0]\cong X,
\qquad
X\otimes\Delta[1]\cong X\timess I,
$$
this map is the pushout-product of the cofibration
$
i\colon\ray\to X
$
with the trivial cofibration
$
\Delta[0]\hookrightarrow\Delta[1]
$
of simplicial sets. Hence, since $X$ is cofibrant, $j$ is a trivial cofibration. Moreover, since $Y$ is fibrant,  $p\colon Y\to\ray$ is a fibration and  the commutative square
$$
\xymatrix{
X\timess\{0\}\sqcup_{i(\ray)\timess\{0\}}
i(\ray)\timess I
\ar[r]
\ar@{^{(}->}[d]_j
&Y\ar[d]^{p}\\
X\timess I
\ar[r]_-K
\ar@{-->}[ur]
&\ray
}
$$
admits a lifting, where the upper map is given by $f$ on $X\times\{0\}$ and by $i$ on $\ray\times I$.
The restriction of the lifting to $X\times\{1\}$ defines a map
$
\widetilde f\colon X\to Y
$
in $\epun$  and the lifting itself provides an exterior homotopy under $\ray$ from $f$ to $\widetilde f$. Hence every element of $[X,Y]^\ray$ is represented by a morphism in $\E_*$, proving surjectivity.

To check injectivity let $f,g\colon X\to Y$ be morphisms in $\E_*$ which are
exteriorly homotopic under $\ray$ via an exterior homotopy
$
H\colon X\timess I\longrightarrow Y
$
under $\ray$.

As before, we denote by $i$ and $p$ the sections and projection over the ray of $X$ and $Y$.
Set
$$
F=p \circ H\colon X\timess I\longrightarrow \ray\quad\text{and}\quad  
G=p \circ \pi\colon X\timess I\longrightarrow \ray,
$$
where $\pi\colon X\timess I\to X$ is the projection, and note that both homotopies coincide on
$$
A=X\times\{0,1\}\,\cup\, \ray\times I\subset X\timess I.
$$
Then, the map
$$
K\colon (X\timess I)\timess I\longrightarrow \ray,\qquad K\bigl((x,t),s\bigr)=(1-s)\,F(x,t)+s\,G(x,t)
$$
defines an exterior homotopy
from $F$ to $G$, relative to $A$.

Consider the subspace
$$
B=(X\timess I)\times\{0\}\,\cup\, A\times I
\subset (X\timess I)\timess I
$$
and observe that the exterior map
$$
h\colon B\longrightarrow Y,
$$
which extends $H$ on $(X \times I)\times\{0\}$, and  is defined on $A\times I$ by 
$$
h(x,0,s)=f(x),\quad 
h(x,1,s)=g(x),\quad 
h(r,t,s)=i(r),
$$
fits in the following commutative square of $\E$
$$
\xymatrix{
B \ar[r]^h \ar@{^{(}->}[d]_j & Y \ar[d]^{p}\\
(X\timess I)\timess I \ar[r]_-K \ar@{-->}[ur]^{\widetilde K} & \ray.
}
$$
We verify the existence of the lift $\widetilde K$. Since $Y$ is fibrant, the projection
$
p\colon Y\to\ray
$
is a fibration in $\E$. On the other hand, consider the trivial cofibration of simplicial sets
$$
L=(\Delta[1]\times{0})\cup
(\partial\Delta[1]\times\Delta[1])
\hookrightarrow
\Delta[1]\times\Delta[1].
$$
 Under the natural identifications
$|\Delta[1]|=I$ and $|\partial\Delta[1]|=\{0,1\}$, the inclusion
$$
j\colon B\longrightarrow (X\timess I)\timess I
$$
is then the pushout-product of the cofibration $i\colon \ray\to X$ and 
$L\hookrightarrow\Delta[1]\times\Delta[1]$.
By the pushout-product axiom, $j$ is a trivial cofibration, and the lift $\widetilde K$ exists.
Furthermore, it defines a map
$$
\widetilde H\colon X\timess I\longrightarrow  Y,
\quad \widetilde H(x,t)=\widetilde K((x,t),1).
$$
such that
$$
\widetilde H(x,0)=f(x),\quad \widetilde H(x,1)=g(x),
\quad \widetilde H(r,t)=i(r)\quad\text{and}\quad p\circ \widetilde H = G = p\circ \pi.
$$
Thus $\widetilde H$ is an exterior homotopy from $f$ to $g$ both under and
over $\ray$ and consequently, it induces a homotopy
$
X\otimes_\ray \Delta[1]\longrightarrow Y
$
in $\E_*$ between $f$ and $g$. 
\end{proof}
 
As an immediate consequence we obtain:
 
 \begin{theorem}[Whitehead theorem in $\E_*$]\label{white}
A map in $\epun$ between cofibrant-fibrant objects  is a weak equivalence if and only if it is an
exterior homotopy equivalence under $\ray$.
\end{theorem}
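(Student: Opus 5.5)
The plan is to combine the classical Whitehead theorem for model categories with Proposition~\ref{rectificacion}. In any model category, a weak equivalence between cofibrant-fibrant objects is a homotopy equivalence, and conversely a homotopy equivalence (with respect to cylinder or path objects) between such objects is a weak equivalence; see e.g.\ \cite[Thm.~7.5.10]{hirsch0}. Applied in $\E_*$ with the model structure of Proposition~\ref{oju}, a map $f\colon X\to Y$ between cofibrant-fibrant objects is therefore a weak equivalence if and only if it becomes an isomorphism in the homotopy category. Equivalently, there is $g\colon Y\to X$ in $\E_*$ with $gf$ and $fg$ homotopic to the identities in the sense of $[-,-]_{\E_*}$, using the good cylinder objects $X\otimes_\ray\Delta[1]$ of Definition~\ref{cilpath}.

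Next I translate this homotopy relation into exterior homotopy under $\ray$, using Proposition~\ref{rectificacion} for the pairs $(X,X)$, $(X,Y)$, $(Y,X)$ and $(Y,Y)$. All four are pairs of cofibrant and fibrant objects, so each natural map $[A,B]_{\E_*}\to[A,B]^\ray$ is a bijection. These maps are induced by the identity on representatives, so they are compatible with composition: composing maps in $\E_*$ and then passing to classes under $\ray$ is the same as composing classes.

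For the direction ``weak equivalence $\Rightarrow$ exterior homotopy equivalence under $\ray$'', take the homotopy inverse $g$ in $\E_*$. Applying well-definedness of (\ref{map}) shows that $gf\simeq\id_X$ and $fg\simeq\id_Y$ by exterior homotopies under $\ray$.

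For the converse, suppose $g\colon Y\to X$ is an exterior map under $\ray$, with $gf$ and $fg$ homotopic under $\ray$ to the identities. There is a subtle point here, which I expect to be the main obstacle: $g$ need not lie over $\ray$. I would handle it by surjectivity of (\ref{map}) for the pair $(Y,X)$, which gives some $g'\in\Hom_{\E_*}(Y,X)$ exteriorly homotopic to $g$ under $\ray$. Then $g'f$ is homotopic under $\ray$ to $gf$, hence to $\id_X$, and both $g'f$ and $\id_X$ are morphisms of $\E_*$. By injectivity of (\ref{map}) for $(X,X)$, $g'f$ and $\id_X$ are equal in $[X,X]_{\E_*}$. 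Symmetrically, $fg'$ and $\id_Y$ are equal in $[Y,Y]_{\E_*}$. So $f$ is a homotopy equivalence in $\E_*$ between cofibrant-fibrant objects, and hence a weak equivalence.

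The only further check is that the homotopy relation used in the abstract Whitehead theorem agrees with the one defining $[-,-]_{\E_*}$. For cofibrant-fibrant objects, left and right homotopy coincide and can be computed with any good cylinder object, in particular with the functorial one of Definition~\ref{cilpath}, so the argument closes.
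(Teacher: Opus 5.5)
Your proposal is correct and follows essentially the same route as the paper. Both arguments reduce to the model-categorical statement that a map between cofibrant-fibrant objects is a weak equivalence if and only if it is a homotopy equivalence in $\E_*$. The forward direction then uses the well-definedness of (\ref{map}), and the converse replaces $g$ by a morphism $g'$ of $\E_*$ via surjectivity and concludes with injectivity. Your version simply spells out what the paper leaves implicit: compatibility with composition, and that $X\otimes_\ray\Delta[1]$ is a good cylinder object.
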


\begin{proof}
A map in $\epun$ between between cofibrant-fibrant objects is a weak equivalence if and only
if it is a homotopy equivalence in $\epun$.

Assume first that $f$ is a homotopy equivalence in $\E_*$, with homotopy inverse $g$. Since $f$ and $g$ are morphisms in $\E_*$, Proposition~\ref{rectificacion} implies that the corresponding homotopies in $\E_*$ determine exterior homotopies under $\ray$. Hence $g$ is an exterior homotopy inverse of $f$ under $\ray$.

Conversely, let $f$ be an exterior homotopy equivalence under $\ray$, with exterior homotopy inverse $g$. By Proposition~\ref{rectificacion}, $g$ is exteriorly homotopic under $\ray$ to a morphism
$
g'\colon Y\to X
$
in $\E_*$. The same proposition then implies that $g'$ is a homotopy inverse of $f$ in $\epun$.
\end{proof}

We conclude this section by establishing a property of $\epun$ that is essential for its stabilization and is not inherited directly from $\E$, as shown in Remark~\ref{noleft}. The presence of a discrete ray in every object of $\epun$ rules out the type of counterexample exhibited in in {\em loc.\ cit.}

\begin{theorem}\label{leftpro} The model category $\epun$ is left proper. 
\end{theorem}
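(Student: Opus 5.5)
We have to show that in $\epun$ the pushout of a weak equivalence along a cofibration is again a weak equivalence. Take a pushout square in $\epun$ with $i\colon A\to B$ a cofibration and $w\colon A\to C$ a weak equivalence, and write $j\colon C\to D=B\sqcup_A C$. Pushouts in $\epun$ are computed in $\E$, and weak equivalences and cofibrations are detected by the forgetful functor (Proposition~\ref{oju}). So we must check two things: that $j_*\colon[\N,C]\to[\N,D]$ is a bijection, and that $j$ induces isomorphisms on all $\pi_n^e$ at every discrete ray.

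The natural strategy is to reduce to a statement about the functor $\map_\E(\N,-)$, using the identification $\pi_n^e(X,\alpha)\cong\pi_n(\map_\E(\N,X),\alpha)$ of Definition~\ref{pgrupo}. We proceed in three steps.

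\textbf{Step 1: reduce to relative cell complexes.} Since $i$ is a retract of a relative $\cali_*$-cell complex, with $\cali_*$ the generating set of Remark~\ref{nuevacof}, and weak equivalences are closed under retracts, it suffices to treat $i$ a transfinite composition of pushouts of generating cofibrations $A'\sqcup\ray\to B'\sqcup\ray$, where $A'\to B'$ is $\bsph^{n-1}\to\bdisk^n$. By pasting pushouts and a colimit argument, it is then enough to handle a single cell attachment and to show that transfinite compositions of such pushouts of weak equivalences remain weak equivalences.

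\textbf{Step 2: single cells via $\map_\E(\N,-)$.} For a single cell, $\map_\E(\N,-)$ should convert the attachment of $\N\timess D^n$ along $\N\timess S^{n-1}$ into a levelwise attachment of cells $D^n$ on the relevant components of the space of discrete rays. This should yield a homotopy pushout in $\topo$, and the comparison map $\map_\E(\N,C)\to\map_\E(\N,D)$ becomes the pushout of a weak homotopy equivalence along a cofibration in the left proper category $\topo$. The transfinite compositions in Step 1 are handled using the compactness of $\N$ and of the spheres relative to $\cell(\cali)$ (Remark~\ref{observa} and Proposition~\ref{celular}): homotopy classes out of $\bsph^n$ and $\bsph^n\timess I$ into a colimit factor through a finite stage.

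\textbf{Step 3: the obstruction and the role of the ray.} The main obstacle is the base case $n=0$, that is, $\bsph^{-1}=\varnothing$. This is exactly the source of the failure in Remark~\ref{noleft}. There, attaching a space with no discrete rays creates new discrete rays in the pushout: rays that pass through the new piece at finitely many points but eventually lie elsewhere. In $\epun$ the domain and codomain of $w$ carry the ray $s\colon\ray\to A$ together with the retraction onto $\ray$, so a new piece is never "empty at infinity" in the same way. The argument must use this to show that any discrete ray in $D$ meeting $B\setminus A$ is exteriorly homotopic, via the section and retraction, to one in $C$. One would first try the straight-line homotopy $pf\simeq p$ along $\ray$, as in the proof of Proposition~\ref{rectificacion}, to push finitely many stray points onto $s(\ray)$. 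Injectivity of $j_*$, and the analogous argument for homotopies of $\bsph^n$, should follow from the same deformation applied to $\bsph^n\timess I$.
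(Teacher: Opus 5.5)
\textbf{There is a genuine gap: the proposal proves the wrong map is a weak equivalence, and the key idea for handling finitely many stray pieces is missing.}

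\textbf{You analyse the wrong map.} With $i\colon A\to B$ a cofibration and $w\colon A\to C$ a weak equivalence, left properness asks that the cobase change of $w$, namely $\bar w\colon B\to D=B\sqcup_A C$, be a weak equivalence. The map $j\colon C\to D$ you analyse is instead the cobase change of the cofibration $i$, and it is not a weak equivalence in general. Take $A=C=\ray$, $w=\id$, and let $i\colon\ray\to\ray\sqcup\N$ be the generating cofibration of Remark~\ref{nuevacof} attaching a $0$-cell. Then $j=i$. The identity of the new copy of $\N$ is a discrete ray of $D$ that is not exteriorly homotopic into $\ray$, because each $\{m\}\times I$ is connected. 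The same example refutes your Step 3 claim that every discrete ray of $D$ meeting $B\setminus A$ deforms into $C$.

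\textbf{Step 2 fails independently.} $\map_\E(\N,-)$ is a right adjoint (Theorem~\ref{expo}). It does not turn a cell attachment into a homotopy pushout, because a discrete ray or a string of spheres in $D$ may visit the old part and the new cells infinitely often each. Already for a $0$-cell, discrete rays of $C\sqcup\N$ split, compatibly with homotopies, according to the partition $\N=S_1\sqcup S_2$ of indices sent to $C$ and to $\N$. When $S_1$ is finite, the relevant homotopy is the \emph{ordinary} homotopy of a finite power of $C$, which no pushout of spaces of discrete rays records.

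\textbf{The missing idea.} This finite-$S_1$ phenomenon is exactly what the paper's proof handles. Because every object of $\epun$ has a discrete ray, an exterior weak equivalence in $\epun$ is also an ordinary weak equivalence. Indeed, a map $(D^k,S^{k-1})\to(Y,X)$ extends to $\N\timess D^k$ by sending the other copies to the ray, and restricting an exterior compression to $\{0\}\times D^k$ gives an ordinary one. This is precisely what fails in Remark~\ref{noleft}: there $\varnothing\to\ast$ is an exterior but not an ordinary weak equivalence.

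The paper then proves the single-cell case by decomposing test maps:
\begin{itemize}
\item For a $0$-cell, each sphere of $\bsph^k$ lands in one component. If infinitely many land in the old part, the exterior equivalence applies; if only finitely many do, ordinary homotopy groups apply.
\item For $n\ge1$, it uses the open cover by the complement of the set $Z$ of cell centres and the open cells, subdivides, and applies the same dichotomy to $B\setminus Z\to D\setminus Z$. These deformation retract onto $A$ and $C$, so this restriction is equivalent to $w$.
\end{itemize}

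\textbf{Your Step 3 mechanism cannot replace this.} The straight-line homotopy $pf\simeq p$ lives in $\ray$. Lifting it to $D$ would require $D\to\ray$ to be a fibration, as in Proposition~\ref{rectificacion}. Moreover, no deformation can move points of a component of $D$ that is disjoint from the ray.

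Your Step 1 (cells, compactness for transfinite composites, retracts) agrees with the paper and is fine once it is aimed at $\bar w$.
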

\begin{proof}
Consider a pushout in $\epun$
$$
\xymatrix{
A \ar[r]^f_\simeq \ar@{^{(}->}[d]_i &
B \ar@{^{(}->}[d]^j
\\
X \ar[r]_(.30)g &
Y=X\sqcup_A B,
}
$$
where $f$ is a weak equivalence and $i$ is the cofibration obtained by
attaching a single $\mathcal I$-cell to $A$. We show that $g$ is a weak
equivalence.

First, observe that every exterior weak equivalence between exterior spaces, which are equipped with discrete rays, induces isomorphisms on the ordinary relative homotopy groups, and hence is a classical weak equivalence. Indeed,  any map of pairs
$$
\gamma\colon(D^k,S^{k-1})\longrightarrow(Y,X),\quad k\ge 0,
$$
extends to an exterior map
$$
\Gamma\colon
(\N\timess D^k,\N\timess S^{k-1})
\longrightarrow
(Y,X).
$$
Therefore, if $\Gamma$ is exteriorly homotopic, relative to
$\N\timess S^{k-1}$, to a map with image in $X$, then restricting such a
homotopy to the copy $\{0\}\times D^k$ yields an ordinary homotopy,
relative to $S^{k-1}$, from $\gamma$ to a map with image in $X$.

In particular, as every object in $\epun$ admits a discrete ray, all of the above holds for maps in $\epun$.

When $i$ attaches a $0$-cell, the map $g$ is of the form
$$
f\sqcup\id\colon
A\sqcup\N
\longrightarrow
B\sqcup\N.
$$
Since every sphere is connected, each component of an exterior map
$$
\psi\colon\bsph^k\longrightarrow A\sqcup\N
$$
lands entirely in either B or $\mathcal{N}$, and similarly for homotopies of maps $\mathcal{S^k}\rightarrow A\sqcup\N$. If $\psi^{-1}(B)$ consists of countably many spheres, surjectivity on the exterior homotopy groups follows by surjectivity of $\pi_k^e(f)$. Otherwise, $\psi^{-1}(B)$ has only finitely many components, and  surjectiviy follows by arguing on ordinary homotopy groups as noted above. The injectivity part follows by a similar argument on homotopies.

We are thus left with the case in which $i$ attaches an $n$-cell with
$n\ge1$. It suffices to check that for every $k\ge 0$ and every discrete ray 
$\alpha\colon\N\to X$, every exterior map of pairs under $\alpha$
$$
\varphi\colon
(\bdisk^k,\bsph^{k-1})
\longrightarrow
(Y,X)
$$
is exteriorly homotopic, relative to $\bsph^{k-1}$, to a map with image
contained in $X$.

Write
$$
X=A\sqcup_{\bsph^{k-1}}\bdisk^k,
\qquad
Y=B\sqcup_{\bsph^{k-1}}\bdisk^k,
$$
and let $C$ be the set of centres of the string of disks in
$\bdisk^k$. Define the open subsets
$$
X_1=X\setminus C,
\qquad
X_2=\bdisk^k\setminus\bsph^{k-1},
\qquad
Y_1=Y\setminus C,
\qquad
Y_2=X_2,
$$
covering $X$ and $Y$ respectively. 

After a finite subdivision on each disk
we may assume that $\varphi$ decomposes into pieces, each landing either
in $Y_1$ or in $Y_2=X_2$. If there are countably many pieces landing in
$Y_1$, the weak equivalence
$$
g|_{X_1}\colon X_1\longrightarrow Y_1
$$
allows us to replace them, relative to their boundaries, by pieces with
values in $X_1$. If only finitely many pieces land in $Y_1$, the
previous observation allows us to argue instead by means of ordinary
homotopy groups.

Now let
$
i\colon A\longrightarrow X
$
be a general relative $\mathcal I$-cell complex. We claim that
$$
g\colon X\longrightarrow X\sqcup_A B
$$
is a weak equivalence. In fact, since the domains of the generating cofibrations are small
relative to $\mathcal I$-cell complexes, every exterior map
$$
\bsph^n\longrightarrow X
$$
and every exterior homotopy between such maps factors through a finite
relative subcomplex of $X$. The same holds for $X\sqcup_A B$. Hence, to
prove that for every $\alpha\colon\N\to X$
$$
\pi_n^e(g,\alpha)\colon
\pi_n^e(X,\alpha)
\longrightarrow
\pi_n^e(X\sqcup_A B,g\alpha)
$$
is an isomorphism, it suffices to verify the assertion on finite
relative subcomplexes, where it follows by a finite iteration of the
single-cell case.

Finally, since cofibrations are retracts of relative
$\mathcal I$-cell complexes and weak equivalences are closed under
retracts, the result follows for arbitrary cofibrations.
\end{proof}

\section{Cofibrant objects}\label{cofibrantes}

Cofibrant objects play a distinguished role in our context. Our next goal is to obtain a geometric description of these objects and to determine whether the spaces arising naturally in exterior and proper homotopy theory belong to this class. We shall show that they encompass a large family of spaces occurring naturally in exterior and proper homotopy theory. For notational convenience, we denote by $\cofi=\E^{\mathrm{cof}}$ and $\cofi_*= \E_*^{\mathrm{cof}}$ the full subcategories of cofibrant objects in $\E$ and $\E_*$, respectively.

\begin{proposition}\label{existerayo}
For every cofibrant object $X\in\cofi$  there exists an exterior map
$$
p_X\colon X\longrightarrow\ray.
$$
\end{proposition}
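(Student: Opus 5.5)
We argue by the usual cellular induction: first for relative $\mathcal I$-cell complexes built from $\varnothing$, then for retracts. If $X$ is cofibrant, the map $\varnothing\to X$ is a retract of a map $\varnothing\to X'$ in $\cel(\mathcal I)$. There are then exterior maps $X\to X'\to X$ whose composite is the identity, so any exterior map $p_{X'}\colon X'\to\ray$ gives $p_X=p_{X'}\circ(X\to X')$. We may therefore assume $X=\varinjlim_{\beta<\lambda}X_\beta$ with $X_0=\varnothing$, where each $X_{\beta+1}$ is obtained from $X_\beta$ by a pushout along a coproduct of generating cofibrations $\bsph^{n-1}\to\bdisk^n$.

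We build compatible exterior maps $p_\beta\colon X_\beta\to\ray$ by transfinite induction.
\begin{itemize}
\item For $\beta=0$ we take the empty map.
\item At limit ordinals, the colimit in $\E$ receives the compatible family, so it induces $p_\beta$ by the universal property (colimits in $\E$ exist by \cite[Theorem 3.3]{calpiher}).
\item At successor stages we use the universal property of the pushout. We must extend $p_\beta\circ\varphi$, where $\varphi\colon\coprod_j\bsph^{n_j-1}\to X_\beta$ is the attaching map, to $\coprod_j\bdisk^{n_j}$. Since coproducts in $\E$ are computed summandwise, it suffices to extend each $p_\beta\varphi_j\colon\N\timess S^{n-1}\to\ray$ to an exterior map $\N\timess D^n\to\ray$.
\end{itemize}

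The key step, and the only real content, is this extension lemma: every exterior map $h\colon\N\timess S^{n-1}\to\ray$ extends to an exterior map $H\colon\N\timess D^n\to\ray$. For $n=0$ the domain is empty, and we take for instance $H(m,\ast)=m$, which is exterior on $\N$ with the cocompact externology. For $n\ge1$ we use the cone extension
$$
H(m,tx)=t\,h(m,x)+(1-t)\,c_m,\qquad c_m=h(m,x_0),
$$
for $x\in S^{n-1}$ and $t\in[0,1]$; this is continuous on each disk.

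Exteriority has to be checked against the cocompact externology of $\ray$, so we need the following: for every $R$, the set $H^{-1}([0,R])$ is contained in $F^{c}\times D^n$ for some exterior set $F\subseteq\N$. Since $h$ is exterior, $h^{-1}((R,\infty))$ contains $F\times S^{n-1}$ for some exterior $F\subseteq\N$, by the description of $\N\timess S^{n-1}=\N\times S^{n-1}_{tr}$ recalled in \S\ref{prelimi2}. On those components both $h(m,x)>R$ and $c_m>R$, so every convex combination is $>R$, and hence $H>R$ on $F\times D^n$. An exterior set of $\N$ is cofinite and therefore exterior in any relative externology, so $H$ is exterior.

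For $n=0$ the extension is not constrained on an empty domain. Any choice compatible with exteriority works, and $m\mapsto m$ is exterior. The main obstacle is really this bookkeeping of externologies under pushouts and transfinite colimits. We handle it by never checking exteriority on the colimit directly: we only invoke universal properties in $\E$, which requires the cells to be exterior maps, as verified above.
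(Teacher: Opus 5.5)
Your proof is correct and follows essentially the same route as the paper. You reduce to a relative $\mathcal I$-cell complex via the retraction, then extend the map to $\ray$ cell by cell with values on each disk in $[\min\varphi_k,\max\varphi_k]$; your cone formula is an explicit instance of this, and it is exactly what makes the extension exterior. Your version is slightly more careful than the paper's, because it treats limit stages and the attachment of $0$-cells $\N\timess D^0$ explicitly; there the attaching map is empty and the paper's min/max bound says nothing.
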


\begin{proof}
Since $X$ is cofibrant, the map $\varnothing\to X$ is a retract of a relative $\mathcal I$-cell complex
$
\varnothing\to Y
$.
Hence it suffices to prove the result for $Y$.

We proceed by transfinite induction along a cellular filtration
$$
\varnothing=Y_0\longrightarrow Y_1\longrightarrow\cdots
\longrightarrow Y_\lambda\longrightarrow\cdots
\longrightarrow Y,
$$
where, for each successor ordinal $\lambda+1$, there is a pushout diagram
$$
\xymatrix{
\bsph^{n-1}\ar[r]^-{f_\lambda}\ar[d]
&
Y_\lambda\ar[d]
\\
\bdisk^n\ar[r]
&
Y_{\lambda+1}.
}
$$
At the first non-trivial stage, $Y_1=\bdisk^n$ and we take the projection
$
p_1\colon\bdisk^n\to\ray$, 
$p_1(k,x)=k$.

Assume that an exterior map
$
p_\lambda\colon Y_\lambda\to\ray
$
has been constructed, and set
$$
\varphi=p_\lambda f_\lambda\colon\bsph^{n-1}\longrightarrow\ray, \quad\varphi_k=\varphi|_{\{k\}\times S^{n-1}}, \quad k\ge 0.
$$
Since $\ray$ is convex, each $\varphi_k$ extends to a continuous map
$
\psi_k\colon\{k\}\times D^n\to\ray
$
such that
$
\im\psi_k\subset
[\min\varphi_k,\max\varphi_k]$.
These extensions define a continuous map
$$
\psi\colon\bdisk^n=\N\timess D^n\longrightarrow\ray.
$$
To see that $\psi$ is exterior, let $[0,m]\subset\ray$. Since $\varphi$ is exterior, there exists $N\ge0$ such that
$$
\varphi^{-1}[0,m]
\subset
\bigcup_{k=0}^N\{k\}\times S^{n-1}.
$$
It follows that, whenever
$
\min\varphi_k>m,
$
one also has
$
\psi_k(D^n)\cap[0,m]=\varnothing.
$
Hence
$$
\psi^{-1}[0,m]
\subset
\bigcup_{k=0}^N\{k\}\times D^n,
$$
which is compact and $\psi$ is exterior.

Since $\psi$ extends $\varphi$, the pushout above defines an exterior map
$
p_{\lambda+1}\colon Y_{\lambda+1}\to\ray
$
extending $p_\lambda$. These map induce the morphism
$
p=\varinjlim_\lambda p_\lambda\colon
Y\to\ray$.
\end{proof}

Now we develop practical criteria for constructing cofibrations in $\E$ and $\E_*$, and in particular objects in $\cofi$ and $\cofi_*$, based on controlled cell attachment constructions, and within the proper category.

\begin{proposition}\label{ray_extension}
Let $Y\in \PP$ be equipped with a ray. 
Then the ordinary attachment of an $n$-cell
$$
i\colon Y \hookrightarrow Y\sqcup_{S^{n-1}} D^n,
$$
where the resulting space is endowed with the cocompact externology, is a cofibration in $\E$. Furthermore, if the given ray in $Y$ admits a retraction, then $i$ is a cofibration in $\E_*$.
\end{proposition}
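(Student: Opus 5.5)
Since the generating cofibrations of $\E$ are $\bsph^{n-1}\to\bdisk^n$, the plan is to exhibit $i$ as a pushout in $\E$ of a single generating cofibration. The point is that the ordinary disk $D^n$ is not one of the exterior cells. However, we can use the ray $r\colon\ray\to Y$ to build a ``string'' of cells in which all but one are degenerate.

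First I would identify $Y\sqcup_{S^{n-1}}D^n$, with its cocompact externology, with an exterior pushout. Concretely, write $\bdisk^n=\N\timess D^n$ and let $a\colon S^{n-1}\to Y$ be the attaching map. Define $\varphi\colon\bsph^{n-1}\to Y$ by $\varphi(0,x)=a(x)$ and $\varphi(k,x)=r(k)$ for $k\ge1$. Then $\varphi$ is exterior: since $r$ is proper, $r^{-1}(K)$ is compact for each compact $K\subset Y$, so only finitely many $k$ have $r(k)\in K$.

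Form the pushout $P=Y\sqcup_{\bsph^{n-1}}\bdisk^n$ in $\E$. The components $\{k\}\times D^n$ with $k\ge1$ are attached along $\{k\}\times S^{n-1}$ by a constant map, so they become spheres $D^n/S^{n-1}\cong S^n$ wedged onto $Y$ at the points $r(k)$. Hence $P$ is not yet $Y\sqcup_{S^{n-1}}D^n$, and a correction is needed. I would handle this with a two-step pushout. The first step attaches only degenerate cells along the ray and is shown to be a cofibration that the wedge of spheres can absorb. Alternatively, and more cleanly, I would attach along $\bsph^{n-1}\to\bdisk^n$ with the $k\ge1$ pieces sent to $r(k)$, and then observe that the map $P\to Y\sqcup_{S^{n-1}}D^n$ collapsing each extra sphere is a retraction.

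This gives the cleanest route: show $i$ is a retract of the cofibration $Y\to P$. The inclusion $Y\sqcup_{S^{n-1}}D^n\to P$ uses the $k=0$ disk. The retraction $P\to Y\sqcup_{S^{n-1}}D^n$ is the identity on $Y$ and on the $0$-th disk, and sends $\{k\}\times D^n\to r(k)$ for $k\ge1$. Both maps are under $Y$, so $i$ is a retract of a pushout of a map in $\cali$, hence lies in $\cof(\cali)$.

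The main obstacle is checking that these maps are exterior and continuous, and that the externology of the pushout $P$ restricted to $Y\sqcup_{S^{n-1}}D^n$ is the cocompact one. Continuity is clear because pushouts in $\E$ have the quotient topology. For the inclusion, a complement $F$ of a compact set of $P$ has preimage in $Y\sqcup_{S^{n-1}}D^n$ that is cocompact, since the closure of the single cell is compact. The point to verify is that the externology of $P$ is generated by the sets whose preimages are exterior in $Y$ and in $\bdisk^n$. For the retraction, the preimage of a cocompact set $U$ must contain all but finitely many extra disks and a cocompact part of $Y$; this uses properness of $r$ and the description of the pushout externology. A cleaner check is that $P$ with the pushout externology equals $P_{cc}$, which I would verify directly.

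For the pointed statement, suppose the ray admits a retraction $q\colon Y\to\ray$. Extend $q$ over the $n$-cell as in the proof of Proposition~\ref{existerayo}, by coning $q\circ a$ inside $[\min,\max]$, and over the extra disks by the constant value $q r(k)=k$. This makes every map above a map over and under $\ray$. By Proposition~\ref{oju}, cofibrations in $\E_*$ are detected in $\E$, so $i$ is a cofibration in $\E_*$.
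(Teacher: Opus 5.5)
Your proposal is correct and is essentially the paper's argument. Both define $\varphi$ as the attaching map on the $0$-th sphere and as the constant map at $r(k)$ on the others, then exhibit $i$ as a retract of the relative $\cali$-cell complex $Y\to Y\sqcup_\varphi\bdisk^n$ by collapsing the extra spheres, and for $\E_*$ extend the retraction over the cell using convexity of $\ray$ and invoke Proposition~\ref{oju}. Your explicit check that the pushout externology on $Y\sqcup_\varphi\bdisk^n$ is the cocompact one does hold (the paper passes over it with ``clearly''), and you can drop the abandoned two-step-pushout detour.
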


\begin{proof}
Denote by $\alpha\colon\ray\to Y$ the ray in $Y$, and let
$f\colon S^{n-1}\to Y$ be the attaching map.
Define an exterior map
$$
\varphi\colon\bsph^{n-1}\longrightarrow Y
$$
by setting
$
\varphi|_{\{0\}\times S^{n-1}}=f$, 
and, on each copy $\{k\}\times S^{n-1}$ with $k\ge1$, by taking
$\varphi$ to be the constant map with value $\alpha(k)$.
Since $\alpha$ is exterior, so is $\varphi$. Thus
$$
Y\sqcup_\varphi\bdisk^n
$$
is a well-defined exterior pushout, obtained from $Y$ by attaching one
copy of $D^n$ along $f$ and, for each $k\ge1$, an additional copy of
$S^n$ at the point $\alpha(k)$.

Consider the inclusion
$$
j\colon Y\sqcup_f D^n\longrightarrow Y\sqcup_\varphi\bdisk^n,
$$
which sends $D^n$ to the distinguished disk. Clearly, $j$ is exterior,
that is, proper. Moreover, $j$ admits a continuous and proper retraction
$$
r\colon Y\sqcup_\varphi\bdisk^n\longrightarrow Y\sqcup_f D^n,
$$
which is the identity on $Y$ and on the distinguished disk, and
collapses each extra $n$-sphere to its attaching point.

Therefore, the ordinary cell attachment
$$
i\colon Y\longrightarrow Y\sqcup_f D^n
$$
is a retract of the map
$$
Y\longrightarrow Y\sqcup_\varphi\bdisk^n,
$$
which is a relative $\mathcal I$-cell complex. Hence $i$ is a
cofibration in $\E$.

Now assume that the ray $\alpha\colon\ray\to Y$ admits a retraction $p$.
Since $\ray$ is contractible, the composite
$$
S^{n-1}\xrightarrow{f}Y\xrightarrow{p}\ray
$$
extends to a map $D^n\to\ray$. Hence $p$ extends to a continuous map
$$
 Y\sqcup_f D^n\longrightarrow\ray,
$$
which is a retraction of the ray
$$
\ray\stackrel{\alpha}{\longrightarrow}Y
\stackrel{i}{\longrightarrow}Y\sqcup_f D^n.
$$
Both maps are continuous and proper. Thus $i$ is a morphism in $\E_*$ and,
since it is a cofibration in $\E$, it is also a cofibration in $\E_*$.
\end{proof}

\begin{rem}\label{nofinito}
Observe that the argument used in the proof of the previous proposition extends formally to infinite cell attachments. However, although each finite stage carries the cocompact externology, this is no longer true for the resulting exterior space. Indeed, it is obtained as a transfinite composition of pushouts of cocompact exterior spaces, and cocompactness is not preserved under such constructions in general.
\end{rem}

A suitable version of the infinite case can, however, be treated separately as follows.

\begin{proposition}\label{local}
Let $Y\in\PP_\infty$. Then the ordinary attachment of a countable locally finite family of $n$-cells
$$
i\colon Y\hookrightarrow Y\sqcup_{\sqcup_{k\ge 0} S^{n-1}_k}(\sqcup_{k\ge 0} D^n_k),
$$
where the resulting space is endowed with the cocompact externology, is a cofibration in $\E$. 

If, moreover, $Y\in \E_*$, then $i$ is a morphism in $\E_*$ and therefore a cofibration in $\E_*$.
\end{proposition}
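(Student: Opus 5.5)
The plan is to mimic the proof of Proposition~\ref{ray_extension}: realize the countable locally finite attachment as a retract of a single exterior attachment of one Brown--Grossman disk, i.e.\ a pushout of the generating cofibration $\bsph^{n-1}\to\bdisk^n$ in $\cali$. Write $f_k\colon S^{n-1}\to Y$ for the attaching maps and $Z=Y\sqcup_{\sqcup_k S^{n-1}_k}(\sqcup_k D^n_k)$ with the cocompact externology. Define
$$
\varphi\colon\bsph^{n-1}=\N\timess S^{n-1}\longrightarrow Y,\qquad \varphi|_{\{k\}\times S^{n-1}}=f_k .
$$
The first step is to check that $\varphi$ is exterior. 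Fix an exhaustion $K_1\subset K_2\subset\cdots$ of $Y$ by compacta; it suffices to see that each compact set meets only finitely many of the images $f_k(S^{n-1})$. This is exactly where local finiteness enters: since the family of cells is locally finite in $Z$ (equivalently, the attaching images form a locally finite family in $Y$) and $Y$ is locally compact, every compact $K\subset Y$ is covered by finitely many neighbourhoods each meeting finitely many cells. Hence $\varphi^{-1}(Y\setminus K)$ contains $\{k\}\times S^{n-1}$ for all $k\ge N$, so it contains a set of the form $F\times S^{n-1}$ with $F$ cofinite, which is exterior in $\N\timess S^{n-1}$.

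The second step is to identify the exterior pushout $P=Y\sqcup_\varphi\bdisk^n$ with $Z_{cc}$. As topological spaces they coincide (the colimit in $\E$ has the underlying colimit topology, and attaching countably many disks along $\sqcup_k f_k$ is the same as attaching $\N\times D^n$ along $\varphi$). What must be verified is that the pushout externology equals the cocompact one. The pushout externology consists of open $U\subset Z$ whose preimage in $Y$ is exterior (contains $Y\setminus K$ with $K$ compact) and whose preimage in $\N\timess D^n$ contains $F\times D^n$ with $F$ cofinite. Given such $U$, its complement is closed and contained in $K$ together with finitely many closed cells, hence compact; so $U$ is cocompact. Conversely, the complement of a compact set $L\subset Z$: local finiteness implies $L$ meets only finitely many open cells, and $L\cap Y$ is compact, so the complement of $L$ pulls back to exterior sets on both pieces. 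This is the step I expect to be the main obstacle, precisely because Remark~\ref{nofinito} warns that colimits generally destroy cocompactness; local finiteness together with $Y\in\PP_\infty$ (local compactness, $\sigma$-compactness) is what rescues it, and one must be careful that compact subsets of the adjunction space meet only finitely many cells and have compact trace on $Y$.

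With $P\cong Z_{cc}$, the inclusion $i\colon Y\to Z$ is literally a pushout of $\bsph^{n-1}\to\bdisk^n$, hence a relative $\cali$-cell complex and a cofibration in $\E$; no retraction argument is even needed. For the pointed statement, if $Y\in\E_*$ with ray $\alpha$ and retraction $p$, then each $p f_k$ extends over $D^n_k$ with image in $[\min pf_k,\max pf_k]$, exactly as in the proof of Proposition~\ref{existerayo}; the same estimate shows the resulting extension $Z\to\ray$ is exterior. Thus $Z\in\E_*$, $i$ is a morphism in $\E_*$, and since cofibrations in $\E_*$ are detected by the forgetful functor (Proposition~\ref{oju}), $i$ is a cofibration in $\E_*$.
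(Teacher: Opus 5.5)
Your proposal is correct and follows the paper's proof. You use the same ingredients: the map $\varphi\colon\bsph^{n-1}\to Y$ assembled from the attaching maps, exteriority of $\varphi$ from local finiteness, the identification of $Y\sqcup_\varphi\bdisk^n$ with the cocompact attachment, and the cellwise extension of the retraction with values in $[\min pf_k,\max pf_k]$. The differences are only in detail: you verify the identification explicitly at the level of externologies, where the paper asserts it ``by construction''; and you obtain exteriority of the extended retraction from the pushout's universal property, whereas the paper checks directly that $\widetilde p^{-1}[0,m]$ is a closed subset of a compact set.
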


\begin{proof}
Let
$
\{f_k\colon S^{n-1}_k\to Y\}_{k\ge0}
$
be the family of attaching maps. Define
$$
\varphi\colon\bsph^{n-1}=\N\timess S^{n-1}\longrightarrow Y,
\qquad
\varphi|_{\{k\}\times S^{n-1}}=f_k.
$$
The map $\varphi$ is exterior by the local finiteness assumption, since only finitely many spheres meet any given compact subset of $Y$.

By construction, there is an exterior homeomorphism
$$
Y\sqcup_\varphi\bdisk^n
\cong
Y\sqcup_{\sqcup_{k\ge0}S^{n-1}_k}
(\sqcup_{k\ge0}D^n_k),
$$
where the latter is endowed with the cocompact externology. Therefore the inclusion
$i$
is a cofibration in $\E$.

Assume now that $Y\in\E_*$, with section and retraction
$
\ray\xrightarrow{\alpha}Y\xrightarrow{p}\ray
$,
and denote
$$
X=
Y\sqcup_{\sqcup_{k\ge0}S^{n-1}_k}
(\sqcup_{k\ge0}D^n_k).
$$
For each $k\ge0$, consider the composite
$$
\varphi_k\colon
S^{n-1}_k\xrightarrow{f_k}Y\xrightarrow{p}\ray.
$$
Since $\ray=[0,\infty)$ is contractible and convex, $\varphi_k$ extends to a continuous map
$
\widetilde\varphi_k\colon D^n_k\to\ray
$
such that
$
\widetilde\varphi_k(D^n_k)\subset
[\min\varphi_k,\max\varphi_k].
$
Since
$$
\widetilde\varphi_k|_{S_k^{n-1}}=p\circ f_k,
$$
the universal property of the pushout yields a continuous extension
$$
\widetilde p\colon
X\to\ray
$$
of $p$.
To see that $\widetilde p$ is exterior, let $[0,m]\subset\ray$.
If
$$
\widetilde\varphi_k(D_k^n)\cap[0,m]\neq\varnothing,
$$
then $\min\varphi_k\le m$, and hence
$$
f_k(S_k^{n-1})\cap p^{-1}[0,m]\neq\varnothing.
$$
Since $p^{-1}[0,m]$ is compact and the family of attached cells is
locally finite, this occurs for only finitely many $k$. Therefore,
$\widetilde p^{-1}[0,m]$ is contained in the union of
$p^{-1}[0,m]$ and finitely many attached disks. This union is compact,
and $\widetilde p^{-1}[0,m]$ is closed in it. Hence
$\widetilde p^{-1}[0,m]$ is compact, and therefore $\widetilde p$ is
exterior. Thus
$$
\ray\xrightarrow{\alpha}Y\xrightarrow{i}X\xrightarrow{\widetilde p}\ray
$$
is a retractive exterior space. Hence $i$ is a morphism in $\E_*$ and, since it is a cofibration in $\E$, also a cofibration in $\E_*$.
\end{proof}

A similar observation to that in Remark~\ref{nofinito} prevents us from extending the previous result to infinite-dimensional cell complexes.

Applying Propositions~\ref{ray_extension} and~\ref{local} inductively, starting from the ray, we immediately obtain the following.

\begin{corollary}\label{cw_cofibrant}
Let $(X,\ray)$ be a connected, finite-dimensional, locally finite, relative cell complex in the usual topological sense, with at most a countable number of cells in each dimension. Then, endowed with the cocompact externology,  $X\in \cofi_*$.\hfill$\square$
\end{corollary}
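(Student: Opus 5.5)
The plan is to build $X$ from $\ray$ by a finite induction on dimension, using Proposition~\ref{local} at each step, and then conclude by composing cofibrations in $\E_*$.

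\textbf{Skeletal filtration.} Since $(X,\ray)$ is finite-dimensional, say of dimension $d$, it has a skeletal filtration
$$
\ray=X^{(-1)}\subseteq X^{(0)}\subseteq X^{(1)}\subseteq\cdots\subseteq X^{(d)}=X,
$$
where $X^{(n)}$ is obtained from $X^{(n-1)}$ by attaching the countably many $n$-cells of the relative complex. Each attaching family is locally finite, because $X$ is locally finite. We give each $X^{(n)}$ the cocompact externology. Because the filtration is finite, no transfinite colimit appears, and the obstruction of Remark~\ref{nofinito} does not arise.

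\textbf{Each skeleton lies in $\PP_\infty$ and in $\E_*$.} For each $n$ we need $X^{(n)}\in\PP_\infty$. A locally finite CW complex is locally compact and Hausdorff. Having countably many cells and containing the closed subspace $\ray$, it is $\sigma$-compact and non-compact. The skeleton $X^{(n)}$ is closed in $X$, so it inherits all of these properties. For the base case, $\ray$ with identity section and retraction is an object of $\E_*$. We then argue inductively: if $X^{(n-1)}\in\E_*$, Proposition~\ref{local} (the $n=0$ case being a disjoint union of points, handled the same way) shows that $X^{(n-1)}\hookrightarrow X^{(n)}$ is a cofibration in $\E_*$. It also shows that the retraction of $X^{(n-1)}$ extends, so $X^{(n)}\in\E_*$ as well.

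\textbf{Conclusion.} The map $\ray\to X$ is a finite composite of cofibrations in $\E_*$, hence a cofibration. The initial object of $\E_*$ is $\ray$ itself, so $X\in\cofi_*$. It remains to check that the cocompact externology on $X$ agrees with the one produced at the last step; this holds because $X^{(d)}=X$ as spaces.

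The main obstacle I expect is the hypothesis bookkeeping for Proposition~\ref{local} at each stage. In particular, one must verify local finiteness of the attaching maps into $X^{(n-1)}$ itself: any compact set in $X^{(n-1)}$ meets only finitely many closed cells of $X$, so only finitely many attaching spheres hit it. Connectedness plays no essential role beyond the existence of the ray.
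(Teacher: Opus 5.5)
Your proposal is correct and is essentially the paper's argument: build $X$ from $\ray$ through its finitely many skeleta, apply the cell-attachment propositions at each stage, and compose the resulting cofibrations in $\E_*$. One bookkeeping point: Proposition~\ref{local} is proved for a family indexed by all of $\N$, so in a dimension with only finitely many cells you should apply Proposition~\ref{ray_extension} once per cell instead (which is why the paper cites both), and for $0$-cells the retraction has to be chosen to be proper, e.g.\ sending the $k$-th new point to $k$.
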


\begin{rem}\label{ejemplos}

(i) The above result applies to several familiar classes of spaces. For instance, it is well known that open smooth manifolds and PL-manifolds admit locally finite triangulations with countably many simplices in each dimension. 
Therefore, whenever such a triangulation can be chosen to be built over a ray, these spaces fall into our framework. In particular, the Euclidean spaces $\mathbb{R}^n$ belong to $\cofi_*$ for $n\ge 1$.

\smallskip

(ii)
The assumption that there are at most countably many cells in each dimension is automatic if $X\in \PP_\infty$. Indeed, since $X$ is $\sigma$-compact, it is the union of compact subspaces
$
X=\sqcup_{m\ge 0} K_m
$. If $X$ is locally finite, each compact subset $K_m$ meets only finitely many $n$-cells. Hence the set of $n$-cells is a countable union of finite sets, and therefore countable.
\end{rem}

\section{Spectra in the pointed exterior category}

According to the general framework in \S\ref{prespec}, the construction of spectra in $\E_*$ requires a Quillen pair of endofunctors. In view of Definition~\ref{cilpath}, the natural choice is:

\begin{definition}\label {suspen}
 Let $X\in \epun$. The \emph{cone} of $X$ is defined as the pushout  in $\E_*$
$$
\xymatrix{
X \otimes_\ray \Delta[0] \ar[r] \ar[d] & X\otimes_\ray \Delta[1] \ar[d] \\
\ray \ar[r] & CX,
}
$$
where the top map is induced by one of the face inclusions $\Delta[0]\hookrightarrow \Delta[1]$.

Accordingly, the \emph{suspension} of $X$ is defined as the pushout in $\E_*$
$$
\xymatrix{
X \otimes_\ray \partial\Delta[1] \ar[r] \ar[d] & X{\otimes_\ray}\Delta[1] \ar[d] \\
\ray \ar[r] & \Sigma X,
}
$$
where the top horizontal map is again induced by the inclusion. 

Dually,  
the \emph{loop space} of $X$ is given by the pullback in $\E_*$
$$
\xymatrix{
\Omega X \ar[r] \ar[d] & X^{\Delta[1]}_\ray \ar[d] \\
\ray \ar[r] & X^{\partial\Delta[1]}_\ray,
}
$$
where again the right vertical map is induced by $\partial\Delta[1]\hookrightarrow \Delta[1]$. The object $\Omega X$ can be interpreted as the space of loops in $X$ based at points of the distinguished ray, varying continuously along it.
\end{definition}
To avoid extra notation, we have chosen to denote the suspension and loop functors in $\epun$  as in the classical context, since no ambiguity will occur.

\medskip

As it follows from the  general context settled in Theorem \ref{sinpuntear} and Corollary \ref{coropunsin}:

\begin{corollary}\label{adjuncion}
The adjunction $\Sigma\dashv\Omega$ is a Quillen pair.\hfill$\square$
\end{corollary}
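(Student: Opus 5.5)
We will realize $\Sigma$ and $\Omega$ as the pointed tensor and cotensor with the simplicial circle, and then read off the Quillen property from Corollary~\ref{coropunsin}(iii). Set $S^1=\Delta[1]/\partial\Delta[1]\in\catss_*^f$. Since $-\otimes_\ray K$ is a left adjoint, it preserves pushouts. The defining pushout of $\Sigma X$ in Definition~\ref{suspen} is then the image of the pushout $\Delta[1]\leftarrow\partial\Delta[1]\to\Delta[0]$ in $\catss$, except that the corner $X\otimes_\ray\Delta[0]\cong X$ is replaced by $\ray$.

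To account for this, we first work with the pointed (smash) tensor $X\wedge_\ray K$. For a pointed simplicial set $(K,k_0)$ it is the pushout of $\ray\leftarrow X\otimes_\ray\{k_0\}\to X\otimes_\ray K$. A routine check of universal properties identifies $\Sigma X\cong X\wedge_\ray S^1$. Dually, $\Omega X\cong$ the fibre over the ray of the evaluation $X^{S^1}_\ray\to X^{\{k_0\}}_\ray\cong X$, which is the pointed cotensor. Explicitly, the pullback defining $\Omega X$ expresses loops as paths whose two endpoints lie on the ray, which is exactly maps out of $\Delta[1]/\partial\Delta[1]$ based at the ray.

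Next, we establish the adjunction. For $X,Y\in\E_*$, combining Corollary~\ref{coropunsin}(i) at the level of $0$-simplices with the universal properties of these pushouts and pullbacks gives
$$\Hom_{\E_*}(\Sigma X,Y)\cong \Hom_{\E_*}(X,\Omega Y).$$
Both sides are identified with the morphisms $X\otimes_\ray\Delta[1]\to Y$ in $\E_*$ whose restriction to $X\otimes_\ray\partial\Delta[1]$ factors through $\ray$. Naturality is clear, so $\Sigma\dashv\Omega$.

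For the Quillen property, it suffices to show that $\Sigma$ preserves cofibrations and trivial cofibrations. Given a cofibration $u\colon A\to B$ in $\E_*$, we write $\Sigma u$ via the pushout-product $u\,\square\,(\partial\Delta[1]\hookrightarrow\Delta[1])$. This pushout-product is a cofibration, and a trivial one when $u$ is, by the simplicial model structure of Theorem~\ref{sinpuntear}. The map $\Sigma A\to\Sigma B$ is then obtained from it by the pushout along $A\otimes_\ray\Delta[1]\cup B\otimes_\ray\partial\Delta[1]\to \Sigma A\cup_{\ray} B\otimes_\ray\partial\Delta[1]/\!\sim$, that is, by collapsing the ends. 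More cleanly, we first check the cofibration property of $\Sigma A\to\Sigma B$ through the lifting property against trivial fibrations $p$. Using the adjunction, this amounts to showing that $\Omega$ preserves fibrations and trivial fibrations. For $\Omega$ this follows from the dual statement: $p^{\Delta[1]}_\ray\to p^{\partial\Delta[1]}_\ray\times\cdots$ is a (trivial) fibration by SM7, and $\Omega p$ is a pullback of it along the ray, so it is again a (trivial) fibration.

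The one step needing care is the compatibility of the pointed pushout and pullback with the retractive structure over $\ray$. In particular, we must verify that the collapse to $\ray$ (rather than to a point) is precisely the pullback of the cotensor along $\ray\to X^{\partial\Delta[1]}_\ray$. This is purely formal in the retractive category $\E_*$, since $\ray$ is its zero object.
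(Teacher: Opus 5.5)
Your proof is correct and follows the paper's route. The paper simply derives the corollary from the simplicial model structure of Theorem~\ref{sinpuntear} and the tensor--cotensor adjunctions of Corollary~\ref{coropunsin}, and your observation supplies the detail the paper leaves implicit: $\Omega p$ is the base change of the SM7 map $X^{\Delta[1]}_\ray\to X^{\partial\Delta[1]}_\ray\times_{Y^{\partial\Delta[1]}_\ray}Y^{\Delta[1]}_\ray$ along the map $\Omega Y\to X^{\partial\Delta[1]}_\ray\times_{Y^{\partial\Delta[1]}_\ray}Y^{\Delta[1]}_\ray$ induced by the section of $X$. Your first, garbled attempt also works once stated cleanly, since $\Sigma u$ is the cobase change of $u\,\square\,(\partial\Delta[1]\hookrightarrow\Delta[1])$ along the map from its domain to $\Sigma A$ collapsing $B\otimes_\ray\partial\Delta[1]$ onto $\ray$, so the detour through $\Omega$ is optional.
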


In the sequel we will make use of the continuous version of the Brown-Grossman spheres

\begin{definition}\label{pesferac}  Given $n\ge 0$,  the {\em continuous $n$th Brown-Grossman sphere} or simply the {\em continuous exterior sphere} is the exterior space consisting of attaching an $n$-sphere to each integer of the ray. 
\begin{figure}[H]
\centering
\includegraphics[height=18mm]{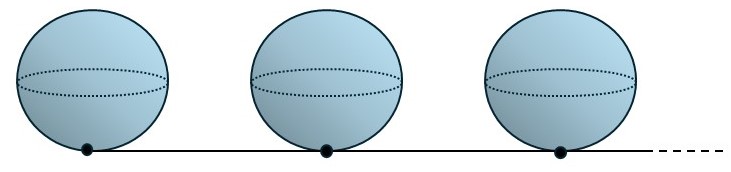}
\end{figure}
That is, 
$$
\bsphc^n=\br_+\cup\bigl( \cup_{k=0}^\infty\, S^n\times\{k\}\bigr)/\bigl(k\sim (x_0,k)\bigr)
$$
endowed with the cocompact externology, and where $x_0$ is the chosen base point in the sphere $S^n$. Equivalently $\bsphc^n\in\eunder$ arises as the exterior pushout of cocompact exterior spaces
$$
\xymatrix{
\mathbb{N}\ar[r]^(.42)\xi\,\, \ar@{^{(}->}@<-2pt>[d]_\iota& \bsph^n \ar[d]
 \\
\ray\ar[r]_\eta&
\bsphc^n}
$$
where  $\xi(m)=(m,x_0)$. The continuous exterior sphere is an object of $\epun$ equipped with the ray  $\eta$ and the retraction $\bsphc^n\to\ray$ induced by $\id_\ray$ and the projection $\bsph^n\to\ray$, $(n,x)\mapsto n$. 
\end{definition}

\begin{rem}\label{continuog}
Given  $(X,\alpha)\in \eunder$,  the universal property of the pushout  yields natural identifications
$$
\Hom_{\eunder}(\bsphc^n,X)\cong \Hom_{\E^\N}\bigl((\bsph^n,\xi),(X,\alpha\iota)\bigr)
$$
which induce a bijection 
$$
[\bsphc^n,X]^\ray\cong [\bsph^n,X]^\N=\pi_n^e(X,\alpha|_\N).
$$
\end{rem}

The following is then immediate from the definitions.

\begin{proposition}\label{suses} $\sus\bsphc^n\cong \bsphc^{n+1}$ for any $n\ge 0$.\hfill$\square$
\end{proposition}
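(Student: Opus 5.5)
The plan is to compute $\sus\bsphc^n$ directly from Definition~\ref{suspen}, using that the suspension is a colimit built out of tensors. Colimits in $\E_*$ are computed from colimits in $\E$ (with the obvious gluing of rays). The tensor $-\otimes_\ray K$ is defined as a pushout. So $\sus$ commutes with colimits: it is a left adjoint by Corollary~\ref{adjuncion}.

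\emph{Step 1: reduce to a computation in the under-category $\E^\N$.} Apply $\sus$ to the pushout square of Definition~\ref{pesferac} presenting $\bsphc^n$ as $\ray\sqcup_\N\bsph^n$. Equivalently, unwind the definition: $\sus X$ is $X\timess I$ with $X\timess\{0,1\}\cup s(\ray)\timess I$ collapsed onto $\ray$ via the projection $X\timess I\to X\to\ray$ on the first part and $\ray\timess I\to\ray$ on the second. For $X=\bsphc^n$ the ray summand $\ray\timess I$ collapses to $\ray$, and each sphere $S^n\times\{k\}\timess I$ has its ends and its basepoint segment $\{x_0\}\times I$ identified to the single point $k\in\ray$. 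Pointwise this gives $S^n\wedge S^1=S^{n+1}$ glued to $k$. So the underlying set and the underlying space of $\sus\bsphc^n$ are $\ray$ with an $(n+1)$-sphere attached at each integer.

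\emph{Step 2: identify topology and externology.} Since everything is compactly generated and the quotient is taken levelwise over the discrete index $k$, the topology is the expected one: the quotient of the disjoint union $\bigl(\ray\timess I\bigr)\sqcup\bigl(\N\timess S^n\timess I\bigr)$. Quotients of $\N\timess(S^n\times I)$ by relations inside each compact fibre give $\N\timess S^{n+1}$ topologically. For the externology, note two facts. First, $\ray\timess I=(\ray\times I)_{cc}$ by (\ref{prodcom}). Second, $\N\timess(S^n\times I)$ is cocompact, since $S^n\times I$ is compact. Exterior sets of the pushout are then the open sets whose preimages are exterior in each piece, i.e.\ open sets containing all but finitely many spheres together with a tail $[m,\infty)$. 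These are exactly the complements of compact subsets of $\bsphc^{n+1}$, so the externology is cocompact.

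\emph{Step 3: check the structure.} Verify that the section and retraction match those of $\bsphc^{n+1}$, namely the ray $\eta$ and the map collapsing the $k$-th sphere to $k$. This gives the isomorphism in $\E_*$.

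The main obstacle is Step 2: checking that the quotient externology on the pushout coincides with the cocompact one. The point to argue is that a set containing a tail of the ray but only cofinitely many spheres is forced, and that conversely every cocompact complement pulls back to exterior sets. I would handle this by comparing explicitly with the presentation $\bsphc^{n+1}=\ray\sqcup_\N\bsph^{n+1}$ of Definition~\ref{pesferac}, and by using that the maps between the two presentations are continuous and preserve the compact pieces $[0,m]\cup\bigcup_{k\le m}S^{n+1}$.
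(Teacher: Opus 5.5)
Your proposal is correct and follows the same route as the paper, which simply declares the statement immediate from Definitions~\ref{suspen} and~\ref{pesferac}. Unwinding the pushouts, each $S^n\times\{k\}\timess I$ is collapsed along $S^n\times\{0,1\}\cup\{x_0\}\times I$ to an $(n+1)$-sphere attached at $k\in\ray$, and the pushout externology is cocompact because $\bsphc^n\timess I=(\bsphc^n\times I)_{cc}$ and the quotient map onto $\bsphc^{n+1}$ is surjective with compact preimages of compact sets; your opening suggestion to apply $\sus$ to the square of Definition~\ref{pesferac} is a slip, since that square lives in $\E$ rather than $\epun$, but the argument you actually carry out does not use it.
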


 Henceforth, for any object $X \in \E_*$ and any $n \ge 0$, we write $\pi_n^e(X)$ for $\pi_n^e(X,\alpha|_\N)$, where $\alpha\colon \ray \to X$ is  the section ray of $X$.

\begin{proposition}\label{homolup}
For any fibrant object $X\in \E_*$ and any $n\ge 0$, there is a natural isomorphism
$$
\pi_n^e(\Omega X)\cong \pi_{n+1}^e(X).
$$
\end{proposition}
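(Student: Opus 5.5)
We reduce both sides to homotopy classes in $\E_*$ out of continuous exterior spheres, and then use the suspension--loop adjunction. By Remark~\ref{continuog}, for any $Z\in\E_*$ with section ray $\alpha$ there is a natural bijection
$$
\pi_n^e(Z)=[\bsph^n,Z]^\N\cong[\bsphc^n,Z]^\ray .
$$
If $Z$ is fibrant, Proposition~\ref{rectificacion} applies, since $\bsphc^n$ is cofibrant in $\E_*$ by Corollary~\ref{cw_cofibrant}: it is a one-dimensional, locally finite relative cell complex over $\ray$ with countably many cells. It identifies the right-hand side with $[\bsphc^n,Z]_{\E_*}$, the morphism set in $\Ho(\E_*)$. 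So for fibrant $Z$ we have $\pi_n^e(Z)\cong[\bsphc^n,Z]_{\E_*}$.

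Now take $Z=\Omega X$ with $X$ fibrant. Since $\Omega$ is a right Quillen functor (Corollary~\ref{adjuncion}), $\Omega X$ is again fibrant, and the derived adjunction $L\Sigma\dashv R\Omega$ on $\Ho(\E_*)$ is computed by $\Sigma$ on cofibrant objects and $\Omega$ on fibrant ones. This gives
$$
\pi_n^e(\Omega X)\cong[\bsphc^n,\Omega X]_{\E_*}\cong[\Sigma\bsphc^n,X]_{\E_*}\cong[\bsphc^{n+1},X]_{\E_*}\cong\pi_{n+1}^e(X),
$$
where the third isomorphism is Proposition~\ref{suses} and the last is the first step applied to $Z=X$. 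The base ray of $\Omega X$ restricted to $\N$ is the one used in defining $\pi_n^e(\Omega X)$, because the section of $\Omega X$ comes from the constant loops along the ray of $X$. Every identification is natural in $X$, so the composite is natural.

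The remaining point, and the main obstacle, is that the composite bijection must be a group isomorphism, and for $n=0$ it must match the group structure on $\pi_1^e(X)$. My plan is to use the fact that $\bsphc^{m}\cong\Sigma\bsphc^{m-1}$ is a cogroup object in $\Ho(\E_*)$, since it is a suspension of a cofibrant object. I would then check that under Remark~\ref{continuog} and Theorem~\ref{expo} this cogroup structure induces the group structure of Definition~\ref{pgrupo}. That structure comes from $\pi_m(\map_\E(\N,X),\alpha)$, and the suspension coordinate of $\Sigma\bsphc^{m-1}$ corresponds to a loop coordinate in $S^m=\Sigma S^{m-1}$, so the pinch maps should agree.

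The adjunction isomorphism respects these structures by the usual Eckmann--Hilton argument: $\Sigma$ sends the cogroup structure of $\bsphc^n$ to that of $\bsphc^{n+1}$, and the loop structure on $\Omega X$ is adjoint to the suspension cogroup. The delicate part is verifying the match between the model-categorical pinch map and the concrete concatenation in $\map_\E(\N,X)$. As an alternative, I would instead identify $\map_\E(\N,\Omega X)$ with a space of paths in $\map_\E(\N,X)$ fibred over the ray. Via Theorem~\ref{expo}, this should give $\pi_n(\map_\E(\N,\Omega X))\cong\pi_{n+1}(\map_\E(\N,X))$ directly by the classical loop-space isomorphism.
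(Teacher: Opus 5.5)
Your argument is the paper's proof. It uses the same chain $\pi_n^e(\Omega X)\cong[\bsphc^n,\Omega X]^\ray\cong[\bsphc^n,\Omega X]_{\E_*}\cong[\Sigma\bsphc^n,X]_{\E_*}\cong[\bsphc^{n+1},X]_{\E_*}\cong\pi_{n+1}^e(X)$, justified by Remark~\ref{continuog}, Proposition~\ref{rectificacion}, Corollary~\ref{adjuncion} and Proposition~\ref{suses}, and your cogroup paragraph is an (only sketched) addition, since the paper does not discuss compatibility with the group structures at all.

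One small correction: get cofibrancy of $\bsphc^n$ directly rather than from Corollary~\ref{cw_cofibrant}, whose connectedness hypothesis fails for $\bsphc^0\cong\ray\sqcup\N$ (and $\bsphc^n$ is $n$-dimensional, not one-dimensional, over $\ray$). Indeed, $\ray\to\bsphc^n$ is the pushout along $\N\hookrightarrow\ray$ of $\xi\colon\N\to\bsph^n$, which is itself a pushout of $\bsph^{n-1}\to\bdisk^n$ along the projection $\bsph^{n-1}\to\N$.
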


\begin{proof}
Since $\Omega$ is a right Quillen functor, $\Omega X$ is fibrant while $\bsphc^n$ is clearly a cofibrant object of $\epun$. Hence, by Remark \ref{continuog}, Propositions~\ref{rectificacion} and Proposition~\ref{suses}, together with  Corollary \ref{adjuncion}, we obtain natural isomorphisms:
$$
\begin{aligned}
\pi_n^e(\Omega X)&=[\bsphc^n,\Omega X]^\ray\cong [\bsphc^n,\Omega X]_{\E_*}\cong [\Sigma \bsphc^n,X]_{\E_*}\\&\cong [\bsphc^{n+1},X]_{\epun}\cong [\bsphc^{n+1},X]^\ray\cong\pi_{n+1}^e(X).
\end{aligned}
$$
\end{proof}

We now have all the necessary ingredients to develop spectra in the homotopy category:

\begin{definition}\label{espectroe}
An \emph{exterior spectrum} is a spectrum in $\E_*$ with respect to the Quillen adjunction of endofunctors $\Sigma \dashv \Omega$. In other words, it consists of a sequence $X = \{X_n\}_{n \geq 0}$ of exterior spaces in $\E_*$ together with a family of \emph{structure maps} also  in $\epun$
$$
\sigma_n \colon \Sigma X_n \longrightarrow X_{n+1}, \quad \text{or equivalently,} \quad \eta_n \colon X_n \longrightarrow \Omega X_{n+1}, \quad n \geq 0.
$$

A \emph{map $f \colon X \to Y$ of exterior spectra} is a collection $\{f_n \colon X_n \to Y_n\}_{n \geq 0}$ of maps in $\epun$ such that, for all $n \geq 0$, the following diagram commutes:
$$
\xymatrix{
\Sigma X_n \ar[r]^{\Sigma f_n} \ar[d]_{\sigma_n} & \Sigma Y_n \ar[d]^{\sigma_n} \\
X_{n+1} \ar[r]_{f_{n+1}} & Y_{n+1}
}
\quad \text{or equivalently,} \quad
\xymatrix{
X_n \ar[r]^{f_n} \ar[d]_{\eta_n} & Y_n \ar[d]_{\eta_n} \\
\Omega X_{n+1} \ar[r]_{\Omega f_{n+1}} & \Omega Y_{n+1}.
}
$$

We denote the resulting category by $\spece$. From now on, unless otherwise stated, we will simply refer to an exterior spectrum as a \emph{spectrum}.
\end{definition}

\begin{example} 
 Given $X\in\epun$, the \emph{exterior suspension spectrum} $\Sigma X\in \spece$ is defined by $(\Sigma X)_n=\Sigma^n X$, with identity structure maps. 
As a particular case, taking $X=\bsphc^0$, Proposition \ref{suses} yields the \emph{exterior sphere spectrum} $\mathcal{S}_\ray=\{\bsphc^n\}_{n\ge 0}$.
\end{example}

What follows is a direct translation into our context of the general results collected in \S\ref{prespec}: 

\begin{proposition}
The Quillen pair $\sus\dashv\lup$ prolongs to adjoint functors $\suse\dashv\lupe$ in $\spece$. Furthermore, for each $k\ge0$, the evaluation functor
$$
 \eva_k\colon \spece\longrightarrow \E_*,\quad X\mapsto X_k,
$$
 is right adjoint to
$$
 \sus^{\infty-k}\colon \E_*\longrightarrow\spece,\quad (\sus^{\infty-k}X)_n=\begin{cases} \sus^{n-k}X&\text{if $n\ge k$},\\ \,\,\,\ray&\text{otherwise}.\end{cases}
$$
\hfill$\square$
\end{proposition}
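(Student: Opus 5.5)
The statement specializes the general constructions of \S\ref{prespec} to $\quic=\epun$, so my plan is to check the hypotheses there and then compute the adjoints explicitly. First I would record that $\epun$ satisfies the standing assumptions of \S\ref{prespec}. It is pointed, with zero object $\ray\xrightarrow{\id}\ray\xrightarrow{\id}\ray$. It is left proper by Theorem~\ref{leftpro} and cellular by Proposition~\ref{oju}. The suspension and loop functors form a Quillen pair by Corollary~\ref{adjuncion}. With these in hand, the prolongation $\suse\dashv\lupe$ and the adjunctions $\sus^{\infty-k}\dashv\eva_k$ are the general ones described in \S\ref{prespec} (cf.\ \cite{ho1}). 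The only thing left is to identify the formulas in our setting.

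For the prolongation, I would set $(\suse X)_n=\sus X_n$ with structure maps $\sus\sigma_n$, and $(\lupe Y)_n=\lup Y_n$ with adjoint structure maps $\lup\eta_n$. I would then verify the adjunction levelwise. A map $\suse X\to Y$ is a family $f_n\colon\sus X_n\to Y_n$ in $\epun$ compatible with the structure maps. Its levelwise adjoints $f_n^\adj\colon X_n\to\lup Y_n$ are compatible with the structure maps of $\lupe Y$ by naturality of the adjunction $\sus\dashv\lup$. Concretely, the square $\sigma\circ\sus f_n = f_{n+1}\circ\sus\sigma$ transposes to the corresponding square for the adjoints. This is routine.

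For $\sus^{\infty-k}\dashv\eva_k$, the key point is that the initial object $0$ of \S\ref{prespec} is here the zero object $\ray$. This explains the value $\ray$ in degrees $n<k$. The structure maps are $\sus\ray\cong\ray\to\ray$ for $n<k-1$, then $\sus\ray\cong\ray\to X$ given by the section at $n=k-1$, and identities from degree $k$ on. I would need $\sus\ray\cong\ray$. This follows from Definition~\ref{suspen}: $\ray\otimes_\ray K\cong\ray$, so both pushouts collapse.

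Given $g\colon X\to Y_k$ in $\epun$, I would define $\tilde g\colon\sus^{\infty-k}X\to Y$ by three rules. In degrees $n<k$ it is the zero map $\ray\to Y_n$, i.e.\ the section. In degree $k$ it is $g$. In degrees $n>k$ it is the composite $\sus^{n-k}X\xrightarrow{\sus^{n-k}g}\sus^{n-k}Y_k\to Y_n$, using iterated structure maps. Compatibility below level $k$ holds because structure maps are morphisms in $\epun$ and hence preserve the section. Uniqueness holds because any map out of $\sus^{\infty-k}X$ is determined by its level $k$ component, by the compatibility squares.

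The only mildly delicate step is this handling of the zero object, namely checking that $\sus\ray\cong\ray$ and that zero maps are forced in low degrees. I expect no real obstacle there.
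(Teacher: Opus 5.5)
Your proposal is correct and takes the same route as the paper, which treats the statement as a direct specialization of the general constructions of \S\ref{prespec} to $\quic=\epun$, with the zero object $\ray$ playing the role of $0$; your levelwise check of both adjunctions makes explicit what the paper leaves to the reader. One minor remark: left properness and cellularity are not actually needed here, since both adjunctions follow purely categorically from $\sus\dashv\lup$ and from $\ray$ being initial in $\epun$.
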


The cellular, left proper, and cofibrantly generated \emph{projective model structure} on $\spece$ is defined by declaring the projective weak equivalences and projective fibrations to be the levelwise weak equivalences and fibrations in $\epun$. With respect to this structure, the  pairs of functors $\suse \dashv \lupe$ and $\sus^{\infty-k} \dashv \eva_k$, for $k \geq 0$, are Quillen adjunctions.

The \emph{stable model structure}, which preserves cellularity and left properness on $\spece$, is obtained as the Bousfield localization of the projective structure with respect to the set of morphisms given in (\ref{equibous}). In this structure, the pair $\suse \dashv \lupe$ is a Quillen equivalence, and the fibrant objects are the \emph{$\Omega$-spectra}. That is, exterior spectra $X$ which are levelwise fibrant and such that the adjoints  $X_n \xrightarrow{\simeq} \lup X_{n+1}$ of the structure maps are weak equivalences in $\epun$.

An {\em stable exterior stable equivalence} is a weak equivalence in the stable model structure. As in the classical case, we characterize exterior stable equivalences as follows.

\begin{definition}\label{stableq}
Let $X\in\spece$ be an exterior spectrum. 
 For any
$k\in \mathbb Z$, the $k$-th \emph{exterior stable homotopy group}
of $X$ is defined by
$$
\pi_k^{s,e}(X)
=
\varinjlim_n
\pi_{k+n}^{e}(X^{fib}_n),
$$
where $X^{fib}$ is a fibrant replacement of $X$ in the stable model
structure, that is, an exterior $\Omega$-spectrum stably weakly
equivalent to $X$, and the colimit is taken over the morphisms
$$
 \pi_{k+n}^{e}(X^{fib}_n)
\stackrel{\pi_{k+n}^{e}(\eta_n)}{\longrightarrow}
\pi_{k+n}^{e}(\Omega X^{fib}_{n+1})
\cong
\pi_{k+n+1}^{e}(X^{fib}_{n+1}).
$$
As every $\eta_n$ is a weak equivalence these maps are isomorphisms and thus,  for every $k\in\mathbb Z$
and every $n\geq 0$ with $k+n\geq 0$, the canonical map
$$
\pi_{k+n}^{e}(X^{fib}_n)
\stackrel{\cong}{\longrightarrow}
\pi_k^{s,e}(X)
$$
is also an isomorphism.
\end{definition}

\begin{theorem}\label{stableequivstablegroups}
A map $f\colon X\to Y$ of exterior spectra is a stable  equivalence
if and only if it induces  isomorphisms
$$
\pi_k^{s,e}(f):
\pi_k^{s,e}(X)\xrightarrow{\cong}\pi_k^{s,e}(Y),\quad k\in\Z.
$$
\end{theorem}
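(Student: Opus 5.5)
The plan is to reduce everything to $\Omega$-spectra, where stable equivalences are levelwise weak equivalences, and then to recognise levelwise exterior weak equivalences through the groups $\pi^e_*$ computed at the section ray.

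\emph{Step 1 (reduction to $\Omega$-spectra).} Choose functorial stably fibrant replacements $X\to X^{fib}$ and $Y\to Y^{fib}$. Let $f^{fib}\colon X^{fib}\to Y^{fib}$ be the induced map, so that the evident square commutes. By two-out-of-three, $f$ is a stable equivalence if and only if $f^{fib}$ is one. By Definition~\ref{stableq}, $\pi_k^{s,e}(f)$ is, by construction, the map induced by $f^{fib}$. Since $X^{fib}$ and $Y^{fib}$ are local objects, the properties of left Bousfield localization recalled in \S\ref{prespec} show that $f^{fib}$ is a stable equivalence if and only if it is a projective, i.e.\ levelwise, weak equivalence in $\epun$. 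So from now on I assume $X$ and $Y$ are $\Omega$-spectra, and the task is to prove:
\begin{center}
$f_n$ is a weak equivalence in $\epun$ for all $n$ \quad$\iff$\quad $\pi_k^{s,e}(f)$ is an isomorphism for all $k\in\Z$.
\end{center}

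\emph{Step 2 (translation to section-ray homotopy groups).} By Definition~\ref{stableq}, for $k+n\ge 0$ the canonical maps $\pi^e_{k+n}(X_n)\cong\pi^{s,e}_k(X)$ are isomorphisms, and they are natural in $f$. Hence $\pi_k^{s,e}(f)$ is an isomorphism for all $k$ if and only if $\pi^e_j(f_n)$, computed at the section ray, is an isomorphism for all $j,n\ge0$. In the forward direction, a weak equivalence in $\epun$ is an exterior weak equivalence (Proposition~\ref{oju}). By Definition~\ref{we} it therefore induces isomorphisms on $\pi^e_j$ at every discrete ray, in particular at $\alpha|_\N$, which settles the ``only if'' part.

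\emph{Step 3 (the converse, the main obstacle).} Definition~\ref{we} requires more than Step~2 provides: a bijection $[\N,X_n]\to[\N,Y_n]$, and isomorphisms on $\pi^e_j$ at \emph{every} discrete ray, not only the section one. To get these I would use the $\Omega$-spectrum condition to replace $X_n$ by $\Omega X_{n+1}$; by Remark~\ref{homoequi}(iii) this is harmless up to natural weak equivalence. It then suffices to prove the following lemma: for a fibrant $Z\in\epun$, every discrete ray of $\Omega Z$ can be transported to the section ray, compatibly with maps. The idea is that $\Omega Z$ carries a fibrewise loop composition over $\ray$, so that for any discrete ray $\beta$ of $\Omega Z$:
\begin{itemize}
\item[(a)] $\beta$ is exteriorly homotopic to a ray lying over the integers of the section ray; here one reparametrises the base points along $\ray$, using that $Z\to\ray$ is a fibration.
\item[(b)] $\pi^e_j(\Omega Z,\beta)\cong\pi^e_j(\Omega Z,\alpha|_\N)$ naturally, via pointwise loop multiplication by $\beta^{-1}$, exactly as for classical H-groups.
\item[(c)] $[\N,\Omega Z]\cong\pi^e_0(\Omega Z,\alpha|_\N)\cong\pi^e_1(Z)$, the last isomorphism by Proposition~\ref{homolup}.
\end{itemize}
With (a)--(c), isomorphisms on section-ray groups of $\Omega X_{n+1}\to\Omega Y_{n+1}$ give a bijection on $[\N,-]$ and isomorphisms at all discrete rays. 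Hence each $f_n$ is a weak equivalence and $f$ is a stable equivalence.

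I expect making (a) and (b) rigorous to be the main difficulty. My first attempt would be to realise the loop multiplication via the simplicial cotensor $(-)^{K}_\ray$, with $K$ a simplicial model of the pinch map. I would then check exteriority of the translation homotopies directly from the cocompact externology of $\N$, in the same spirit as the exteriority verifications in Proposition~\ref{existerayo}.
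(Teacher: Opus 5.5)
Your proposal is correct and follows the paper's proof essentially step for step: reduction to $\Omega$-spectra, section-ray isomorphisms obtained from the stable groups, passage from $X_n$ to $\Omega X_{n+1}$ via Proposition~\ref{homolup}, and then a translation lemma (the paper's Lemma~\ref{translation}). That lemma has two ingredients, which are exactly your (a), moving $\beta$ over $\iota\colon\N\hookrightarrow\ray$ along the straight-line homotopy using the fibration $\Omega Z\to\ray$, and your (b), translating inside a loop-space fibre. Two small points: the paper carries out (b) in the fibre over $\iota$ of the Serre fibration $\map_\E(\N,\Omega Z)\to\map_\E(\N,\ray)$, where pointwise multiplication by $\beta'^{-1}$ is actually defined (spheres under $\beta'$ in $\Omega Z$ need not lie over $\iota$); and your bijection on $[\N,-]$ already follows from the $j=0$ case of Step~2 by Remark~\ref{homoequi}(i).
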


In the classical stable setting, one defines the stable homotopy groups using only the distinguished base point of the loop spaces in an $\Omega$-spectrum, since translations induce homotopy equivalences between all path components of a loop space. The analogous fact
is not immediate for exterior loop spaces, because they are defined in
terms of exterior loops. The following lemma shows that, nevertheless, all discrete rays of an
exterior loop space determine the same exterior homotopy groups.

\begin{lemma}\label{translation}
Let
$
f\colon\Omega Z\to\Omega W
$
be a morphism in $\E_*$ between fibrant exterior loop spaces. Suppose that,
for a fixed $n\ge1$, the map
$$
\pi_n^e(f,\alpha)\colon
\pi_n^e(\Omega Z,\alpha)
\stackrel{\cong}{\longrightarrow}
\pi_n^e(\Omega W,f\alpha)
$$
is an isomorphism, where $\alpha$ denotes the distinguished discrete ray
of $\Omega Z$. Then, for every discrete ray
$
\beta\colon\N\to\Omega Z
$,
the map
$$
\pi_n^e(f,\beta)\colon
\pi_n^e(\Omega Z,\beta)
\stackrel{\cong}{\longrightarrow}
\pi_n^e(\Omega W,f\beta)
$$
is also an isomorphism.
\end{lemma}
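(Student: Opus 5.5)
The plan is to imitate the classical translation argument: using loop multiplication, construct for every discrete ray $\beta$ of $\Omega Z$ an exterior self-equivalence of $\Omega Z$ carrying $\alpha$ to $\beta$, and do the same compatibly in $\Omega W$. Recall that a point of $\Omega Z$ over $r\in\ray$ is a loop in the fibre of $Z$ over $r$, based at the section point $i(r)$. Fibrewise concatenation and reversal of loops are defined over $\ray$, and we will check, using the explicit externologies of the cotensors $Z^{\Delta[1]}_\ray$ and their pullbacks, that they give exterior maps
$$
\mu\colon \Omega Z\times_\ray \Omega Z\to\Omega Z,\qquad \nu\colon\Omega Z\to\Omega Z .
$$
For a discrete ray $\beta\colon\N\to\Omega Z$ we write $r_\beta=p\circ\beta\colon\N\to\ray$, where $p\colon\Omega Z\to\ray$ is the projection. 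Since $p$ carries $\alpha$ to the identity on $\N$, the map $\beta$ lives over $r_\beta$, whereas $\alpha$ lives over $\id_\N$.

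The first step is to move $\beta$ so that it lies over the same points of the base as $\alpha$. The homotopy $t\mapsto (1-t)r_\beta+t\,\iota$, where $\iota\colon\N\hookrightarrow\ray$ is the inclusion, is an exterior homotopy into $\ray$. Since $p\colon\Omega Z\to \ray$ is a fibration (because $\Omega Z$ is fibrant) and $\N\to\N\timess I$ is a trivial cofibration, this homotopy lifts to an exterior homotopy from $\beta$ to a ray $\beta'$ with $p\beta'=\iota$. Change of ray along an exterior homotopy induces isomorphisms on $\pi_n^e$, compatibly with $f$, by Remark~\ref{homoequi}(iii). We may therefore assume $p\beta=p\alpha$.

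The second step is the translation itself. We define
$$
\tau_\beta\colon \Omega Z\times_\ray\N\to\Omega Z,\qquad (\omega,m)\mapsto \mu(\beta(m),\omega).
$$
More concretely, we work with the exterior homotopy groups through Definition~\ref{pgrupo}: a class in $\pi_n^e(\Omega Z,\beta)$ is represented by a family of maps $\gamma_m\colon S^n\to(\Omega Z)_m$ based at $\beta(m)$, where $(\Omega Z)_m$ denotes the fibre over $m$. Sending $\gamma_m\mapsto \nu(\beta(m))\cdot\gamma_m$ produces maps based at $\nu(\beta(m))\cdot\beta(m)$. This point is joined to the section point $\alpha(m)$ by the standard contraction of $\bar\ell\cdot\ell$, and that contraction is uniform in $m$ and exterior. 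Consequently the assignment defines a map $\pi_n^e(\Omega Z,\beta)\to\pi_n^e(\Omega Z,\alpha)$, and left multiplication by $\beta$ provides its inverse up to the canonical change-of-ray isomorphisms. The map $f$ is a morphism of $\E_*$ between loop spaces, but it need not be an $\Omega$ of anything, so it may fail to commute with $\mu$. To handle this I would first show that $\pi_n^e(f,\beta)$ is an isomorphism whenever $f=\Omega g$ for some $g$, in which case strict compatibility with $\mu$ holds. For a general $f$ I would use the fact that in the homotopy category of $\E_*$ every map between fibrant loop spaces of the relevant kind is homotopic to one that respects the $H$-structure up to homotopy. Alternatively, and this is the route I would try first, I would avoid $\mu$ on the $W$ side altogether: identify $\pi_n^e(\Omega Z,\beta)$ with a group of exterior homotopy classes of maps $\bsph^n\to\Omega Z$ under $\beta$, apply the fibration long exact sequence for $p$ together with the fibrewise $H$-group structure, and compare both $\pi_n^e(\Omega Z,\beta)$ and $\pi_n^e(\Omega Z,\alpha)$ to the same fibrewise object.

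The main obstacle is the interaction between $f$ and the translations. I expect the cleanest resolution to be the following: the fibre of $\Omega Z$ over each $m$ is a group-like $H$-space, and the shearing map $(\omega,\omega')\mapsto(\omega,\omega\cdot\omega')$ is an exterior fibrewise homotopy equivalence. It identifies the restriction of $\Omega Z$ over $\N$ with a product over $\N$, so that $\pi_n^e(\Omega Z,\beta)\cong\pi_n^e(\Omega Z,\alpha)$ naturally in maps that preserve the $H$-structure up to homotopy. I would then verify that $f$ does preserve it up to homotopy, and make that verification the central technical check of the proof.
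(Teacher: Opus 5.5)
\textbf{There is a genuine gap, and it cannot be closed in the stated generality.} The step you single out as the central technical check is verifying that an arbitrary morphism $f\colon\Omega Z\to\Omega W$ of $\E_*$ respects the fibrewise loop multiplication up to homotopy. That check cannot be carried out. The ``fact'' you invoke, that every map between fibrant loop spaces is homotopic to one respecting the $H$-structure, is already false classically. In fact the lemma itself fails for general $f$.

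Here is a counterexample. Let $M=S^1\times S^2$ and $Z=\ray\timess M=\ray\times M_{tr}$, with section $t\mapsto(t,*)$ and the projection. $Z$ is fibrant, since its projection is the pullback of $M_{tr}\to *$ and every object of $\E$ is fibrant. Unwinding the cotensors gives $\Omega Z\cong\ray\times(\Omega M)_{tr}$, where $\Omega M$ is the classical loop space. The externology only constrains the $\ray$-coordinate, so for a discrete ray $\beta=(\beta_1,\beta_2)$ one gets $\pi_n^e(\Omega Z,\beta)\cong\pi_n^e(\ray,\beta_1)\times\prod_{m\in\N}\pi_n(\Omega M,\beta_2(m))$.

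Let $g\colon\Omega M\to\Omega M$ be the identity on the (clopen) component of the constant loop $c$, and constant at $c$ on every other component. Put $f=\id_\ray\times g$; this is a morphism of $\E_*$ between fibrant exterior loop spaces. Then $\pi_1^e(f,\alpha)$ is the identity. Now take $\beta(m)=(m,\omega)$, where $\omega$ is a loop whose $S^1$-coordinate has degree one. Then $\pi_1^e(f,\beta)$ is zero on the nontrivial factor $\prod_{\N}\pi_1(\Omega M,\omega)\cong\prod_{\N}\pi_2(S^2)$, hence not injective.

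\textbf{What survives.} The special case you planned to treat first does work. For $f=\Omega g$, your first step (lifting the linear homotopy from $p\beta$ to $\iota$ through the fibration $p$) followed by translation via fibrewise concatenation, which $\Omega g$ preserves strictly, is a correct argument. It is essentially the paper's argument: the paper transports $\beta$ along the same linear path in $\map_\E(\N,\ray)$, using that $p^\N$ is a Serre fibration. It then identifies the fibre over $\iota$ with a classical loop space and translates there.

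The paper asserts that these translations are natural with respect to $f$. That naturality is exactly the $H$-map property you flagged, and it holds only for loop maps. The lemma is applied only to $\Omega f_{n+1}$ in the proof of Theorem~\ref{stableequivstablegroups}. So the right repair is to restrict the statement to maps of the form $\Omega g$, not to attempt the general verification.

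\textbf{A smaller point.} A class in $\pi_n^e(\Omega Z,\beta)$ is not a priori represented by spheres lying in the fibres $(\Omega Z)_m$. Before translating, you must first deform representatives into the fibres, using homotopy lifting for $p$ together with a linear homotopy of the projected map into $\ray$.
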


\begin{proof}
Let
$
p_Z\colon\Omega Z\to\ray$ and $
p_W\colon\Omega W\to\ray
$
be the retractions which induces continuous maps 
$$
p_Z^\N\colon
\map_\E(\N,\Omega Z)\longrightarrow\map_\E(\N,\ray)\quad\text{and}\quad
p_W^\N\colon
\map_\E(\N,\Omega W)\longrightarrow\map_\E(\N,\ray)
$$
with respect to the particular topologies given in \S\ref{prelimi2}. Furthermore,
both are Serre fibrations. Indeed, under the exponential law of Theorem \ref{expo}, their lifting problems are adjoint to
lifting problems for $p_Z$ and $p_W$ with respect to the generating
trivial cofibrations
$
\bdisk^m\longrightarrow\bcil^m$. As $\Omega Z$ and $\Omega W$ are fibrant it follows that $p_Z$ and $p_W$ are exterior fibrations and the lifting problem can be solved.

If we denote 
$
q=p_Z\beta=p_Wf\beta 
$
 the map

$$
H\colon \N\timess I\longrightarrow \ray,\quad H(k,t)=(1-t)q(k)+tk
$$
defines an exterior homotopy from $q$ to the inclusion $\iota \colon \N\hookrightarrow\ray$. Indeed, for every
$m\ge0$,
$$
H^{-1}[0,m]
\subset
\bigl(q^{-1}[0,m]\times I\bigr)
\cup
\bigl(\{0,\ldots,m\}\times I\bigr),
$$
which is compact. Hence, by Theorem~\ref{expo}, $H$ determines a path
from $q$ to $\iota$ in $\map_\E(\N,\ray)$.
Transport along this path yields homotopy equivalences between the
corresponding fibers of $p_Z^\N$ and $p_W^\N$
$$
F_Z(q)\simeq F_Z(\iota),
\qquad
F_W(q)\simeq F_W(\iota),
$$
natural, up to homotopy, with respect to the map induced by $f$. In
particular, $\beta$ and $f\beta$ are transported to points
$$
\beta'\in F_Z(\iota),
\qquad
f\beta'\in F_W(\iota).
$$

Applying $\map_\E(\N,-)$ to the pullback defining the exterior loop
space and using Theorem~\ref{expo}, the fibers $F_Z(\iota)$ and
$F_W(\iota)$ are naturally identified with classical loop spaces.
Consequently, translations induce homotopy equivalences
$$
(F_Z(\iota),\beta')\simeq(F_Z(\iota),\alpha),
\quad
(F_W(\iota),f\beta')\simeq(F_W(\iota),f\alpha),
$$
and these equivalences are natural with respect to $f$.

It follows that the map induced by $f$ on the homotopy groups based at
$\beta$ is identified with the corresponding map based at $\alpha$.
The latter is an isomorphism by hypothesis. Finally, using the natural
identifications
$$
\pi_n^e(\Omega Z,\gamma)
\cong
\pi_n\bigl(\map_\E(\N,\Omega Z),\gamma\bigr)
$$
and similarly for $\Omega W$, we conclude that
$
\pi_n^e(f,\beta)
$
is an isomorphism.
\end{proof}
\begin{proof}[Proof of Theorem \ref{stableequivstablegroups}]
By definition, $f$ induces  isomorphisms on all exterior stable homotopy groups if and only if
the chosen fibrant replacement $f^{fib}$ does. Therefore, it suffices
to prove the result for maps between exterior $\Omega$-spectra.

Assume first that $f\colon X\to Y$ is a stable  equivalence between
exterior $\Omega$-spectra and recall that such a map is always a levelwise  equivalence. Hence, each map
$
f_n\colon X_n\to Y_n
$
induces
isomorphisms on all exterior homotopy groups. Therefore, by Definition \ref{stableq},  $\pi_k^{s,e}(f)$ is an isomorphism for every $k\in\mathbb Z$.

Conversely, suppose that $f\colon X\to Y$ is a map of exterior
$\Omega$-spectra inducing  isomorphisms on all exterior stable homotopy groups. Then, for every
$n\geq 0$ and every $m\geq 0$, taking $k=m-n$, we have  commutative
diagrams
$$
\xymatrix{
\pi_m^e(X_n) \ar[r]^{\pi_m^e(f_n)} \ar[d]_{\cong} &
\pi_m^e(Y_n) \ar[d]^{\cong}
\\
\pi_{m-n}^{s,e}(X) \ar[r]_{\cong} &
\pi_{m-n}^{s,e}(Y).
}
$$
It follows that 
$
\pi_m^e(f_n)$, with respect to the discrete ray induce by the section of $X_n$,
is an isomorphism for $m,n\ge 0$.

Now, for $m=0$, and using the natural identification
$
\pi_0^e(X_n,\beta)\cong[\N,X_n]
$ for any discrete ray $\beta$, 
this shows that
$$
[\N,f_n]\colon[\N,X_n]\longrightarrow[\N,Y_n]
$$
is a bijection.

Now let $m\ge1$ and let
$
\beta\colon\N\to X_n
$
be any discrete ray. Since $X$ and $Y$ are $\Omega$-spectra, their
structure maps fit into a commutative square
$$
\xymatrix{
X_n \ar[r]^{f_n} \ar[d]_{\eta_n^X} &
Y_n \ar[d]^{\eta_n^Y}
\\
\Omega X_{n+1} \ar[r]_{\Omega f_{n+1}} &
\Omega Y_{n+1}.
}
$$
where the vertical maps are weak equivalences and therefore induce
isomorphisms
$$
\pi_m^e(X_n,\beta)
\stackrel{\cong}{\longrightarrow}
\pi_m^e(\Omega X_{n+1},\eta_n^X\beta),\qquad 
\pi_m^e(Y_n,f_n\beta)
\stackrel{\cong}{\longrightarrow}
\pi_m^e(\Omega Y_{n+1},\eta_n^Yf_n\beta).
$$
On the other hand, since $
\pi_{m+1}^e(f_{n+1})
$
is an isomorphism at the distinguished discrete ray,  it follows by the natural
isomorphism of Proposition~\ref{homolup} that
$
\pi_m^e(\Omega f_{n+1})
$
is also an isomorphism at the distinguished discrete ray of
$\Omega X_{n+1}$. Lemma~\ref{translation}, applied to the map
$\Omega f_{n+1}$ and the discrete ray $\eta_n^X\beta$, now yields an
isomorphism
$$
\pi_m^e(\Omega X_{n+1},\eta_n^X\beta)
\stackrel{\cong}{\longrightarrow}
\pi_m^e(\Omega Y_{n+1},\eta_n^Yf_n\beta).
$$
Therefore, in the commutative diagram
$$
\xymatrix{
\pi_m^e(X_n,\beta)
\ar[rr]^{\pi_m^e(f_n)}
\ar[d]_{\cong}
&&
\pi_m^e(Y_n,f_n\beta)
\ar[d]^{\cong}
\\
\pi_m^e(\Omega X_{n+1},\eta_n^X\beta)
\ar[rr]_{\cong}^{\pi_m^e(\Omega f_{n+1})}
&&
\pi_m^e(\Omega Y_{n+1},\eta_n^Yf_n\beta),
}
$$
the upper horizontal map is also an isomorphism.

Thus, for every $n\ge0$, the map $f_n$ induces a bijection on discrete
rays and isomorphisms on all exterior homotopy groups based at every
discrete ray. Hence $f_n$ is an exterior weak equivalence. Therefore
$f$ is a levelwise weak equivalence between exterior $\Omega$-spectra,
and consequently a stable equivalence.
\end{proof}

As any two fibrant replacements of an exterior spectrum are connected by
a zig-zag of stable equivalences, Theorem~\ref{stableequivstablegroups}
immediately yields the following.

\begin{corollary}
The exterior stable homotopy groups are well defined. That is, they do
not depend on the choice of stable fibrant replacement.\hfill$\square$
\end{corollary}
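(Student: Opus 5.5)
The corollary should follow from Theorem~\ref{stableequivstablegroups} together with a zig-zag comparison of fibrant replacements. Let $X\in\spece$, and let $X\to X^{fib}$ and $X\to X'^{fib}$ be two stable fibrant replacements, that is, stable equivalences into exterior $\Omega$-spectra. The aim is a canonical isomorphism
$$
\varinjlim_n\pi^e_{k+n}(X^{fib}_n)\cong\varinjlim_n\pi^e_{k+n}(X'^{fib}_n).
$$

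First, I reduce to a direct map between the two replacements. By the general theory of model categories, there is a stable equivalence $X^{fib}\to X'^{fib}$ under $X$, possibly after first replacing $X$ cofibrantly. Concretely, I take a cofibrant replacement $QX\to X$ in the stable structure, which is also a trivial fibration in the projective structure. The composite $QX\to X\to X'^{fib}$ is a map from a cofibrant object to a fibrant one. Lifting along the trivial cofibration part of a factorization of $QX\to X^{fib}$ then gives a map $g\colon X^{fib}\to X'^{fib}$ compatible up to homotopy with the maps from $QX$. By two-out-of-three, $g$ is a stable equivalence between $\Omega$-spectra.

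Second, I argue that $g$ induces isomorphisms on stable homotopy groups. Since $g$ is a stable equivalence between stably fibrant objects, it is a levelwise weak equivalence. This is the first half of the proof of Theorem~\ref{stableequivstablegroups}, and it uses only the localization facts recalled in \S\ref{prespec}. Each $g_n$ therefore induces isomorphisms $\pi^e_{k+n}(X^{fib}_n)\to\pi^e_{k+n}(X'^{fib}_n)$ at the distinguished ray. These isomorphisms are compatible with the colimit systems, because $g$ commutes with the adjoint structure maps $\eta_n$ and the identification of Proposition~\ref{homolup} is natural. So $g$ induces an isomorphism of colimits.

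The main obstacle is that the resulting isomorphism should be independent of the choice of $g$; I only need a well-defined group up to canonical isomorphism. Any two such lifts are homotopic, in fact right homotopic, in the projective structure, since $X'^{fib}$ is fibrant and the lifts agree on the cofibrant $QX$. Levelwise homotopic maps induce the same map on $\pi^e_*$ at the distinguished ray, by Remark~\ref{homoequi}(iii) together with Proposition~\ref{rectificacion}. Hence the induced isomorphism is canonical, and the corollary follows.
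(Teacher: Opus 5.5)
Your route is the paper's. The paper notes that any two stable fibrant replacements are joined by a zig-zag of stable equivalences and applies Theorem~\ref{stableequivstablegroups}. The relevant content of that theorem is exactly your second paragraph: a stable equivalence of $\Omega$-spectra is a levelwise weak equivalence, so it induces isomorphisms on $\pi^e_*$ at the distinguished rays, compatibly with the maps defining the colimit.

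\textbf{The gap is in your first step.} Factor $QX\to X^{fib}$ as a trivial cofibration $QX\to Z$ followed by a fibration $Z\to X^{fib}$, and lift against $X'^{fib}\to *$. What you obtain is a map $Z\to X'^{fib}$, not a map out of $X^{fib}$. To push it down to $X^{fib}$ you would need a section of the trivial fibration $Z\to X^{fib}$. Such a section exists only if $X^{fib}$ is cofibrant, and $\Omega$-spectra need not be. In general, two weakly equivalent fibrant objects need not admit a weak equivalence in a prescribed direction. For example, in nonnegatively graded chain complexes of abelian groups there is no quasi-isomorphism from $\Z/2$ to $\Z\xrightarrow{2}\Z$. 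So the map $g$ on which your second and third paragraphs rest has not been constructed.

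\textbf{The repair} is to keep the zig-zag $X^{fib}\leftarrow Z\to X'^{fib}$ that your construction actually produces.
\begin{itemize}
\item $Z$ is an $\Omega$-spectrum, because $Z\to X^{fib}$ is a stable fibration with stably fibrant target.
\item Both legs are stable equivalences by two-out-of-three. For the right leg, this is because $QX\to Z\to X'^{fib}$ equals the stable equivalence $QX\to X\to X'^{fib}$.
\item Your second paragraph then applies to each leg separately, and the two colimits are isomorphic.
\end{itemize}
If \emph{stably weakly equivalent} in Definition~\ref{stableq} is read as an arbitrary zig-zag, use $X^{fib}\leftarrow Q(X^{fib})\to X'^{fib}$ instead. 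Here the second map represents the isomorphism in $\Ho$; it exists because $Q(X^{fib})$ is cofibrant and $X'^{fib}$ is fibrant.

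This is the paper's argument, with one worthwhile extra: every object of the zig-zag is an $\Omega$-spectrum. For non-fibrant spectra $\pi^{s,e}_*$ is itself defined through a replacement, so applying Theorem~\ref{stableequivstablegroups} along a zig-zag passing through $X$ would be circular.

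Your canonicity paragraph is not needed, since the statement only asserts independence up to isomorphism. In the repaired version, uniqueness up to homotopy holds for lifts $Z\to X'^{fib}$ relative to $QX$, as $Z$ is cofibrant and $X'^{fib}$ is fibrant. It does not hold for a map out of $X^{fib}$.
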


\begin{rem}
Following the classical construction, one could define the exterior
stable homotopy groups of an arbitrary spectrum $X\in\spece$ directly by
$$
\pi_k^{s,e}(X)=
\varinjlim_n\pi_{k+n}^e(X_n),
$$
without passing to a fibrant replacement. However, in order to prove
Theorem~\ref{stableequivstablegroups}, one would first have to show that
these groups are invariant under stable equivalences. This can be
achieved by constructing a spectrification functor and proving that it
provides a stable fibrant replacement. Since this requires a
considerable amount of additional technical work and is not needed in
the sequel, we have chosen the equivalent definition via fibrant
replacements.
\end{rem}

 We denote by $\spec$ the category of spectra of pointed simplicial sets, and by $\pi_*^{st}$ the stable homotopy groups of an object in this category.

\begin{theorem}\label{homoto}
There is a bifunctor
$$
\calf\colon \E_*^{op}\times \spece\longrightarrow \spec
$$
such that, for every cofibrant object $X\in\cofi_*$, and any $\Omega$-spectrum $Y\in \spece$,
$$
\pi_k^{st}\calf(X,Y)\cong [X,Y_k]_{\epun}\cong[X,Y_k]^\ray\quad \text{for all $k\in\Z$.}
$$
\end{theorem}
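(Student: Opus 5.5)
Define $\calf(X,Y)$ levelwise by the pointed simplicial mapping spaces of Definition~\ref{tensorycotensor}:
$$
\calf(X,Y)_n=\Map_{\E_*}(X,Y_n),
$$
pointed by the zero morphism. The structure maps come from the adjoints $\eta_n\colon Y_n\to\Omega Y_{n+1}$. Composing with $\eta_n$ gives $\Map_{\E_*}(X,Y_n)\to\Map_{\E_*}(X,\Omega Y_{n+1})$. By Corollary~\ref{coropunsin}(i) and the defining pullback of $\Omega$ (a cotensor by $\Delta[1]$ relative to $\partial\Delta[1]$), the target is naturally isomorphic to the simplicial loop space $\Omega\Map_{\E_*}(X,Y_{n+1})=\Map_{\catss_*}(S^1,\Map_{\E_*}(X,Y_{n+1}))$, with $S^1=\Delta[1]/\partial\Delta[1]$. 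This isomorphism holds because $\Map_{\E_*}(X,-)$ preserves pullbacks and sends cotensors to simplicial cotensors. The adjoint of the resulting map $\calf(X,Y)_n\to\Omega\calf(X,Y)_{n+1}$ is the structure map $S^1\wedge\calf(X,Y)_n\to\calf(X,Y)_{n+1}$. Functoriality in both variables is clear, contravariant in $X$ via precomposition.

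Next I use the simplicial model structure (Theorem~\ref{sinpuntear}). For $X$ cofibrant and $Y_n$ fibrant, SM7 makes $\Map_{\E_*}(X,Y_n)$ a Kan complex. Since $\Map_{\E_*}(X,-)$ is right Quillen into $\catss_*$, it preserves weak equivalences between fibrant objects. So if $Y$ is an $\Omega$-spectrum, each $\calf(X,Y)_n\to\Omega\calf(X,Y)_{n+1}$ is a weak equivalence of Kan complexes, and $\calf(X,Y)$ is an $\Omega$-spectrum of simplicial sets. Consequently
$$
\pi_k^{st}\calf(X,Y)\cong\pi_{k+n}\calf(X,Y)_n\quad\text{for any } n\ge0 \text{ with } k+n\ge0.
$$
For $k\ge0$ take $n=k$, so this is $\pi_0\Map_{\E_*}(X,Y_k)$. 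In a simplicial model category with $X$ cofibrant and $Y_k$ fibrant, this is $[X,Y_k]_{\E_*}$, the homotopy relation via the cylinder $X\otimes_\ray\Delta[1]$ (Definition~\ref{cilpath}). For $k<0$ I read $Y_k$ as $\Omega^{-k}Y_0$, consistent with the $\Omega$-spectrum convention. Then $\pi_{-k}\Map_{\E_*}(X,Y_0)\cong\pi_0\Map_{\E_*}(X,\Omega^{-k}Y_0)$ by the same cotensor identification, and this equals $[X,\Omega^{-k}Y_0]_{\E_*}$. Finally, Proposition~\ref{rectificacion} gives the identification with $[X,Y_k]^\ray$, since $\Omega^{-k}Y_0$ is fibrant because $\Omega$ is right Quillen.

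The main obstacle is the identification $\Map_{\E_*}(X,\Omega Y)\cong\Omega\Map_{\E_*}(X,Y)$ and its compatibility with the structure maps. It requires checking that the relative cotensor $(-)^K_\ray$ corresponds exactly to the simplicial cotensor on mapping spaces. This must hold naturally, not just up to homotopy, so that $\calf$ is an honest spectrum and a bifunctor. I would derive it from Corollary~\ref{coropunsin}(i), applied to $K=\Delta[1]$ and $K=\partial\Delta[1]$, together with the fact that $\Map_{\E_*}(X,-)$, as a right adjoint, turns the pullback defining $\Omega Y$ into the pullback defining the simplicial loop space, based at the zero morphism.
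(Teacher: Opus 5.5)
Your argument follows the paper's proof. You define $\calf(X,Y)_n=\Map_{\E_*}(X,Y_n)$ and build the structure maps from $\eta_n$ and the identification $\Map_{\E_*}(X,\Omega Y_{n+1})\cong\Omega\Map_{\E_*}(X,Y_{n+1})$. You obtain that identification directly from the defining pullback and Corollary~\ref{coropunsin}(i); the paper instead goes through Proposition~\ref{lupita} and Corollary~\ref{coropun}(i), which comes to the same thing. You then show $\calf(X,Y)$ is an $\Omega$-spectrum, identify $\pi_0$ of mapping spaces with homotopy classes, and finish with Proposition~\ref{rectificacion}. Your appeal to SM7, to $\Map_{\E_*}(X,-)$ being right Quillen for cofibrant $X$, and to Ken Brown's lemma (using that $\Omega Y_{n+1}$ is fibrant) makes explicit a step the paper only asserts.

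There is an indexing slip to fix. With $\pi_k^{st}Z=\varinjlim_n\pi_{k+n}Z_n$, your correct formula $\pi_k^{st}\calf(X,Y)\cong\pi_{k+n}\calf(X,Y)_n$ at $n=k$ gives $\pi_{2k}\calf(X,Y)_k$, not $\pi_0$. For $k<0$, the group $\pi_{-k}\calf(X,Y)_0$ you compute is $\pi^{st}_{-k}$, not $\pi^{st}_k$. What your two computations actually prove, for all $k$, is
\[
\pi^{st}_{-k}\calf(X,Y)\cong[X,Y_k]_{\E_*}\cong[X,Y_k]^\ray,\qquad k\in\Z .
\]
This is the correct statement; it matches the classical $E^k(X)\cong\pi_{-k}F(X,E)$.

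The theorem as written carries the same sign slip, as does the paper's own proof, which asserts $\pi_k^{st}\calf(X,Y)\cong\pi_0\calf(X,Y)_k$. So $\pi_k^{st}$ there has to be read with cohomological indexing. Rather than reaching the stated indexing through the false arithmetic $k+k=0$, state that convention explicitly, or prove the $\pi^{st}_{-k}$ version above. With that correction your proof is complete.
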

We call $\calf$ the {\em function spectrum}. Here, for $k<0$, we define $Y_k=\Omega^{-k}Y_0$. For the proof  we need some preparation.

\begin{definition}\label{esmash} Let $X \in \E_*$ and $K \in \catss_*^f$, with basepoint $*$.
\begin{enumerate}
\item  The {\em pointed tensor} $X\wedge_\ray K$, or {\em smash product}, of $X\in \E_*$ by $K$ is defined as the pushout in $\E_*$
$$
\xymatrix{
X\ar[r] \ar[d] &
X\otimes_\ray K \ar[d] \\
\ray \ar[r] &
X\wedge_\ray K,
}
$$
where the top horizontal arrow is the map $X\cong X\otimes_\ray * \to X\otimes_\ray K$ induced by the basepoint inclusion $*\to K$, and the left vertical map is the unique morphism $X\cong X\otimes_\ray *\to\ray$ in $\E_*$.

\item Dually, the \emph{pointed cotensor}
$X_*^{K}\in \E_*$ is defined as the pullback in $\E_*$ 
$$
\xymatrix{
(X^K_\ray)_* \ar[r] \ar[d] &
X^K_\ray \ar[d] \\
\ray \ar[r] &
X ,
}
$$
where the bottom horizontal arrow is the map $\ray\to X\cong X^*_\ray$ and the right vertical map is the morphism $X^K_\ray \to X^*_\ray\cong X$ induced again by the inclusion of the basepoint of $K$. 
\end{enumerate}

\end{definition}

\begin{rem} Note that the pointed tensor may also be regarded as the
 pushout in $\E$,
$$
\xymatrix{
(\ray\timess |K|) \cup_{\ray} X\ar[r] \ar[d] &
X \timess |K| \ar[d] \\
\ray \ar[r] &
X \wedge_\ray K,
}
$$
where the  upper left  corner is the pushout in $\E$ of the inclusions
$
\ray\cong \ray\timess *\to \ray\timess |K|$ and $\ray\to X$ and both,  
and both the top horizontal and left vertical maps are induced by the projection $K\to *$ and the inclusion $*\to K$.

On the other hand, $X^K_*$ can be constructed as the pullback in $\E$
$$
\xymatrix{
X^K_* \ar[r] \ar[d] &
X^{|K|} \ar[d] \\
\ray \ar[r] &
\ray^{|K|}\times_\ray X,
}
$$
where the lower right corner is the pullback in $\E$ of the maps
$$
\ray^{|K|}\to \ray^*\cong \ray
\quad \text{and} \quad
X\to \ray,
$$
and the lower horizontal and right vertical maps are induced by the basepoint inclusion $*\to K$ and the projection $K\to *$. 
 
In each case, the resulting object inherits a canonical structure over and under $\ray$, i.e. it is an object of $\E_*$.

\end{rem}

With respect to these constructions we have:

\begin{proposition}
The category $\E_*$  is
enriched, tensored and cotensored over $\catss_*$, and these structures
are compatible with the standard model structure on $\catss_*$ in the
usual Quillen sense.
\end{proposition}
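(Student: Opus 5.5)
The plan is to deduce the pointed enrichment formally from the unpointed simplicial structure of Theorem~\ref{sinpuntear}, much as one passes from $\catss$-enriched to $\catss_*$-enriched structures in any pointed simplicial model category. The mapping object is the pointed simplicial set $\Map_{\E_*}(X,Y)$ of Definition~\ref{tensorycotensor}(3), based at the zero morphism $X\to\ray\to Y$. Composition preserves this basepoint, because any composite with a zero morphism factors through $\ray$. Hence $\E_*$ is enriched over $\catss_*$ with respect to the smash product of pointed simplicial sets.

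The first step is to establish, for $X,Y\in\E_*$ and $K\in\catss_*^f$, natural isomorphisms of pointed simplicial sets
$$
\Map_{\E_*}(X\wedge_\ray K,Y)\cong\Map_{\catss_*}\bigl(K,\Map_{\E_*}(X,Y)\bigr)\cong\Map_{\E_*}(X,(X^K_\ray)_*)\big|_{X\to Y}.
$$
In the last term the cotensor is the pointed cotensor $Y^K_*$ of Definition~\ref{esmash}. To get these, apply $\Map_{\E_*}(-,Y)$ to the pushout defining $X\wedge_\ray K$. This turns it into a pullback of simplicial sets:
\[
\Map_{\E_*}(X\otimes_\ray K,Y)\times_{\Map_{\E_*}(X,Y)}\Map_{\E_*}(\ray,Y).
\]
Here $\Map_{\E_*}(\ray,Y)=*$, since $\ray$ is the zero object of $\E_*$. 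Using Corollary~\ref{coropunsin}(i), this pullback is $\Map_{\catss}(K,\Map_{\E_*}(X,Y))\times_{\Map_{\E_*}(X,Y)}*$, which is exactly $\Map_{\catss_*}(K,\Map_{\E_*}(X,Y))$. The dual argument, applied to the pullback defining $Y^K_*$, gives the other isomorphism. Next one checks the unit and associativity isomorphisms $X\wedge_\ray S^0\cong X$ and $X\wedge_\ray(K\wedge L)\cong(X\wedge_\ray K)\wedge_\ray L$. Both follow from Corollary~\ref{coropunsin}(ii) and the fact that colimits commute with colimits. Alternatively, they follow from the Yoneda lemma applied to the adjunction just proved.

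For compatibility with the model structure, I would verify the pointed SM7 axiom in its pushout-product form. Take a cofibration $u\colon A\to B$ in $\E_*$ and a cofibration $v\colon K\to L$ in $\catss_*$. The claim is that the pointed pushout-product $u\,\square_\wedge\, v$ is a cofibration, and that it is trivial if $u$ or $v$ is. The key observation is that, for a pointed simplicial set $K$, the functor $X\wedge_\ray K$ is the cofiber of $X\otimes_\ray *\to X\otimes_\ray K$. So $u\,\square_\wedge\, v$ is a pushout of the unpointed pushout-product $u\,\square\,v$ formed in $\E_*$, along the collapse of the basepoint copies onto $\ray$. The unpointed $u\,\square\,v$ is a (trivial) cofibration by Theorem~\ref{sinpuntear}. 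Since (trivial) cofibrations are stable under pushout, the pointed statement follows. Equivalently, one can lift the unpointed lifting properties through the adjunction isomorphisms above.

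The main obstacle is making the identification of $u\,\square_\wedge\, v$ as a pushout of $u\,\square\,v$ precise. This requires a small diagram chase with iterated pushouts in $\E_*$, commuting the collapse of $\ray\otimes K$ with the pushouts defining the pushout-product. I would do this chase explicitly at the level of representable functors $\Hom_{\E_*}(-,Y)$, where it reduces to the classical statement that, for a pointed simplicial set $L$, the pointed mapping set is the fiber of $\Map(L,Z)\to\Map(*,Z)$.
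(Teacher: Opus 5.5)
Your proof is correct, but it is organized differently from the paper's. The paper constructs nothing. It quotes a general result, namely that the retractive category $\mathcal C_a$ of a simplicial model category $\mathcal C$ is a pointed simplicial model category tensored and cotensored over $\catss_*$, and applies it with $\mathcal C=\E$, $a=\ray$.

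You instead start from Theorem~\ref{sinpuntear}, i.e.\ from the fact that $\E_*$ is already a simplicial model category whose zero object is $\ray$. You then carry out the passage from $\catss$ to $\catss_*$ by hand:
\begin{itemize}
\item the adjunction isomorphisms come from applying $\Map_{\E_*}(-,Y)$ to the pushout defining $X\wedge_\ray K$, together with $\Map_{\E_*}(\ray,Y)=*$;
\item the pointed SM7 follows because the pointed pushout-product is a cobase change of the unpointed one.
\end{itemize}

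The step you single out as the main obstacle is actually immediate. Let $P$ be the domain of $u\square v$. Computing the colimit of the evident $3\times 3$ diagram row by row gives
$(B\wedge_\ray K)\cup_{A\wedge_\ray K}(A\wedge_\ray L)\cong P\cup_B\ray$ and $B\wedge_\ray L\cong (B\otimes_\ray L)\cup_B\ray$.
Pasting pushouts along $B\to P\to B\otimes_\ray L$ then exhibits $u\square_\wedge v$ as the pushout of $u\square v$ along $P\to P\cup_B\ray$. No detour through representable functors is needed.

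What your route buys is self-containment. It actually proves the isomorphisms that the paper merely lists in Corollary~\ref{coropun}. It also does not depend on an external statement whose hypotheses (retractive objects over $\ray$ rather than merely pointed objects) must be matched to the present situation. The paper's route buys brevity.

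One small slip: the third term of your first display should simply read $\Map_{\E_*}(X,Y^K_*)$, as you say right afterwards.
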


\begin{proof}
This follows from the general fact that if $\mathcal C$ is a simplicial model category and $a\in \mathcal C$, then the retractive category $\mathcal C_a$ inherits a pointed simplicial model structure, which is naturally tensored and cotensored over $\catss_*$. See, for instance, \cite[Thm.~4.2.19]{ho0}. The present situation corresponds to the case $\mathcal C=\E$ and $a=\ray$.
\end{proof}

 In particular:
 
\begin{corollary}\label{coropun} Let $X,Y \in \E_*$ and  $K,L \in \catss_*$. Then:

(i) There are natural
isomorphisms of pointed simplicial sets,
$$
\Map_{\E_*}(X \wedge_\ray K, Y)
\cong
\Map_{\catss_*}\bigl(K,\Map_{\E_*}(X,Y)\bigr)
\cong
\Map_{\E_*}(X, Y^K_*).
$$

(ii) There are natural
isomorphisms in $\E_*$,
$$\begin{aligned}
X \wedge_\ray \Delta[0] &\cong \ray \cong X_*^{\Delta[0]},\\
X \wedge_\ray (K \wedge L) &\cong (X \wedge_\ray K)\wedge_\ray L,\\
X_*^{K \wedge L} &\cong (X_*^K)^L_*.
\end{aligned}
$$

(iii) The adjunction
$
(-)\wedge_\ray K \dashv (-)^K_*
$
is a Quillen pair.\hfill$\square$
\end{corollary}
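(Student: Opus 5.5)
Corollary~\ref{coropun} is a formal consequence of the preceding Proposition: $\E_*$ is a pointed simplicial model category, tensored and cotensored over $\catss_*$ through $\wedge_\ray$ and $(-)^K_*$. The plan is to derive (i) from the unpointed adjunctions of Corollary~\ref{coropunsin} by passing to the defining pushouts and pullbacks. Then (ii) follows from the Yoneda lemma, and (iii) from the pushout-product axiom in its pointed form.

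For (i), I would take the pushout defining $X\wedge_\ray K$ in Definition~\ref{esmash} and apply $\Map_{\E_*}(-,Y)$, which sends pushouts to pullbacks of simplicial sets. This gives a pullback of $\Map_{\E_*}(X\otimes_\ray K,Y)\cong \Map_{\catss}(K,\Map_{\E_*}(X,Y))$ over $\Map_{\E_*}(X,Y)\cong\Map_{\catss}(*,\Map_{\E_*}(X,Y))$, taken along the point $\Map_{\E_*}(\ray,Y)=*$. Here I use that $\ray$ is the zero object of $\E_*$, and that the basepoint of $\Map_{\E_*}(X,Y)$ is the zero morphism. This pullback is precisely the simplicial set of pointed maps $\Map_{\catss_*}(K,\Map_{\E_*}(X,Y))$. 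The isomorphism with $\Map_{\E_*}(X,Y^K_*)$ is obtained dually from the pullback defining the pointed cotensor, applying $\Map_{\E_*}(X,-)$ and using Corollary~\ref{coropunsin}(i). Naturality is inherited from the unpointed isomorphisms.

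For (ii), I would use (i) and the Yoneda lemma. Since $\Delta[0]$, as a pointed simplicial set, is the point, $\Map_{\catss_*}(\Delta[0],-)=*$; hence $X\wedge_\ray\Delta[0]$ and $X^{\Delta[0]}_*$ both corepresent (respectively represent) the trivial functor, so both are $\ray$. For associativity I would use the standard isomorphism $\Map_{\catss_*}(K\wedge L,M)\cong\Map_{\catss_*}(L,\Map_{\catss_*}(K,M))$. Applying (i) twice then shows that $X\wedge_\ray(K\wedge L)$ and $(X\wedge_\ray K)\wedge_\ray L$ corepresent the same functor of $Y$, and the cotensor identity is the adjoint statement. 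Strictly, (i) is stated for finite $K$, and $K\wedge L$ is finite when $K$ and $L$ are. I would restrict to $\catss_*^f$ throughout, as in Definition~\ref{esmash}, since that is where the tensors were defined.

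For (iii), I would show that $-\wedge_\ray K$ preserves cofibrations and trivial cofibrations. The pushout defining $X\wedge_\ray K$ is the pushout of $X\otimes_\ray K$ along $X\otimes_\ray *\to X\otimes_\ray K$. Given a (trivial) cofibration $u\colon X\to X'$ in $\E_*$, I would consider the pushout-product of $u$ with the cofibration $*\to K$. By Theorem~\ref{sinpuntear} this map is a cofibration, and it is trivial when $u$ is. Taking a pushout along the collapse of $X'\otimes_\ray *$ to $\ray$ then identifies $u\wedge_\ray K$ as a cobase change of that pushout-product, hence again a (trivial) cofibration. This gluing identification is the only step I expect to need care: checking that the relevant square of pushouts commutes and is cocartesian, via an iterated-pushout (cube) argument. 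Everything else is formal.
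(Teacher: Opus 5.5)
Your proposal is correct, and it is a more hands-on route than the paper's. The paper gives no separate argument for this corollary. It reads (i)--(iii) off the preceding Proposition, which says that $\E_*$ is a pointed simplicial model category tensored and cotensored over $\catss_*$, and that Proposition is proved only by citing the general fact that retractive categories of simplicial model categories inherit such a structure. You instead derive the needed pieces directly from the unpointed structure of Theorem~\ref{sinpuntear} and Corollary~\ref{coropunsin}. The defining pushouts and pullbacks of $\wedge_\ray$ and $(-)^K_*$ turn the unpointed adjunctions into the pointed ones, because $\ray$ is a zero object and both $\Map_{\E_*}(\ray,Y)$ and $\Map_{\E_*}(X,\ray)$ are points. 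Yoneda then gives (ii), and (iii) reduces to the unpointed pushout-product axiom applied to $u$ and $*\to K$.

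Your version is self-contained and makes explicit the one step that is not purely formal. That step needs no cube argument, only the pasting lemma twice. Set $P=X'\sqcup_X(X\otimes_\ray K)$. Then $P\sqcup_{X'}\ray\cong X\wedge_\ray K$ and $(X'\otimes_\ray K)\sqcup_P(X\wedge_\ray K)\cong X'\wedge_\ray K$. Hence $u\wedge_\ray K$ is a cobase change of $u\,\square\,(*\to K)$.

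The paper's citation gives more than the corollary: the full pointed pushout-product axiom for arbitrary pointed monomorphisms $K\to L$. You do not need it for (iii), since a Quillen pair only requires the left adjoint to preserve cofibrations and trivial cofibrations. Your restriction to finite $K,L$ matches how the tensors and cotensors are actually defined in the paper, even though the statement is phrased for all of $\catss_*$.
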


\begin{proposition}\label{lupita}
For every $X\in \E_*$, and any $n\ge 1$, there are natural isomorphisms
$$
\Sigma^n X \cong X\wedge_\ray S^n,
\qquad
\Omega^n X \cong X_*^{S^n}.
$$
\end{proposition}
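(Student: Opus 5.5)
We first treat the case $n=1$ and then argue by induction. Here $S^n$ denotes a finite pointed simplicial model of the $n$-sphere, namely $S^1=\Delta[1]/\partial\Delta[1]$ and $S^n=S^1\wedge\cdots\wedge S^1$.

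For $n=1$ we compare the two pushouts defining $\Sigma X$ and $X\wedge_\ray S^1$. The functor $X\otimes_\ray-\colon\catss^f\to\E_*$ is a left adjoint by Corollary~\ref{coropunsin}(i), so it preserves pushouts. Applying it to the pushout $\Delta[1]\leftarrow\partial\Delta[1]\to\Delta[0]$ in $\catss^f$ that defines $S^1$ gives
$$
X\otimes_\ray S^1\cong (X\otimes_\ray\Delta[1])\sqcup_{X\otimes_\ray\partial\Delta[1]}X,
$$
where we use $X\otimes_\ray\Delta[0]\cong X$. The smash product $X\wedge_\ray S^1$ is obtained from this by further collapsing the copy of $X$ to $\ray$. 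A pasting of pushouts then identifies it with the pushout of
$$
\ray\longleftarrow X\otimes_\ray\partial\Delta[1]\longrightarrow X\otimes_\ray\Delta[1].
$$
The left map is the zero map, since $X\sqcup_\ray X\to X\to\ray$ is the fold map followed by $r$. This is exactly the pushout of Definition~\ref{suspen}, so $\Sigma X\cong X\wedge_\ray S^1$. The isomorphism is natural in $X$ because every map in the argument is.

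The loop statement for $n=1$ needs no separate computation. Both $\Omega$ and $(-)^{S^1}_*$ are right adjoint to isomorphic functors: $\Omega$ is right adjoint to $\Sigma$ by Corollary~\ref{adjuncion}, and $(-)^{S^1}_*$ is right adjoint to $-\wedge_\ray S^1$ by Corollary~\ref{coropun}(iii). Uniqueness of adjoints then gives $\Omega X\cong X^{S^1}_*$ naturally. Alternatively, one can check the dual pullback computation directly, but the adjoint argument is shorter.

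For the induction step, we use the associativity isomorphism of Corollary~\ref{coropun}(ii):
$$
\Sigma^{n+1}X=\Sigma(\Sigma^nX)\cong(X\wedge_\ray S^n)\wedge_\ray S^1\cong X\wedge_\ray(S^n\wedge S^1)= X\wedge_\ray S^{n+1}.
$$
The loop case follows in the same way from $X^{K\wedge L}_*\cong(X^K_*)^L_*$, or again by uniqueness of adjoints.

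The main obstacle is the pasting-of-pushouts identification in the case $n=1$. One must verify that collapsing the two ends of the cylinder to $\ray$ in a single step agrees with first identifying the two ends with each other and then collapsing the resulting copy of $X$. All colimits here are taken in $\E_*$, and colimits in $\E_*$ are computed in $\E$ after adjoining $\ray$, so no additional point-set topology should be needed for this step.
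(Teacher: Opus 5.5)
Your proof is correct and follows the paper's argument. For $n=1$ the paper proves $\Sigma X\cong X\wedge_\ray S^1$ exactly as you do, by pasting the pushout $X\otimes_\ray S^1\cong (X\otimes_\ray\Delta[1])\sqcup_{X\otimes_\ray\partial\Delta[1]}X$ onto the pushout defining $X\wedge_\ray S^1$, and the induction via Corollary~\ref{coropun}(ii) is the same. Your one deviation is on the loop side: you get $\Omega X\cong X^{S^1}_*$ from uniqueness of right adjoints (using $\Sigma\dashv\Omega$ and $-\wedge_\ray S^1\dashv(-)^{S^1}_*$), whereas the paper runs the dual pasting of pullbacks; your route is equally valid and slightly shorter.
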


\begin{proof} We address first the case $n=1$. Since $S^1$ is the pushout in $\catss$
$$\xymatrix{
\partial\Delta[1] \ar[r] \ar[d] & \Delta[1] \ar[d] \\
\Delta[0] \ar[r] & S^1,
}
$$
the functor $X{\otimes_\ray}{-}$ is 
 left adjoint (see (i) of Corollary \ref{coropunsin}), and $X\otimes_\ray \Delta[0]\cong X$, it follows that the diagram 
$$
\xymatrix{
X\otimes_\ray \partial\Delta[1] \ar[r] \ar[d] &
X\otimes_\ray \Delta[1] \ar[d] \\
X \ar[r] &
X\otimes_\ray S^1.
}
$$
is a pushout in $\epun$.

Composing it with the pushout defining $X\wedge_\ray S^1$ (see Definition \ref{esmash}), we obtain a pushout in $\E_*$
$$
\xymatrix{
X\otimes_\ray \partial\Delta[1] \ar[r] \ar[d] &
X\otimes_\ray \Delta[1] \ar[d] \\
\ray \ar[r] &
X\wedge_\ray S^1
}
$$
which is exactly the diagram defining $\Sigma X$ (see Definition \ref{suspen}).

Dually, since 
 $X_\ray^{(-)}$ is a contravariant right adjoint and  $X_\ray^{\Delta[0]}\cong X$, the diagram
$$
\xymatrix{
X_\ray^{S^1} \ar[r] \ar[d] &
X_\ray^{\Delta[1]} \ar[d] \\
X \ar[r] &
X_\ray^{\partial\Delta[1]}
}
$$
is a pullback in $\E_*$. Precomposing on the left with the pullback defining $X_*^{S^1}$, we obtain the pullback in $\E_*$ defining $\Omega X$:
$$
\xymatrix{
X_*^{S^1} \ar[r] \ar[d] &
X_\ray^{\Delta[1]} \ar[d] \\
\ray \ar[r] &
X_\ray^{\partial\Delta[1]}.
}
$$
The general case follows easily by induction in view of  Corollary  \ref{coropun}(ii).
\end{proof}

\begin{proof}[Proof of Theorem \ref{homoto}]
Define $\calf(X,Y)\in \spece$ by
$$
\calf(X,Y)_n=\Map_{\E_*}(X,Y_n).
$$
On the other hand, the structure maps 
$$
\calf(X,Y)_n \longrightarrow \Omega\,\calf(X,Y)_{n+1}
$$
are obtained using the structure maps of $Y$, together with the natural isomorphisms given by  Corollary~\ref{coropun}(i) and Proposition~\ref{lupita}:
$$
\begin{aligned}
\Map_{\E_*}(X,Y_n)
\longrightarrow &\Map_{\E_*}(X,\Omega Y_{n+1})\cong \Map_{\E_*}\bigl(X,(Y_{n+1})_*^{S^1}\bigr)\\
&\cong \Map_{\catss_*}\bigl(S^1,\Map_{\E_*}(X,Y_{n+1})\bigr)= \Omega\,\Map_{\E_*}(X,Y_{n+1}).
\end{aligned}
$$
Next, if $Y$ is a fibrant object in the stable model structure,  each adjoint structure map
$
Y_n\xrightarrow{\simeq} \Omega Y_{n+1}
$
is a weak equivalence in $\E_*$. As a result, each adjoint structure map of $\calf(X,Y)$ is a weak equivalence of pointed simplicial sets and thus, $\calf(X,Y)$ is also an $\Omega$-spectrum in $\spec$. Hence, for every $k\in \Z$,
$$
\pi_k^{st}\calf(X,Y)\cong \pi_0\calf(X,Y)_k
=\pi_0\Map_{\E_*}(X,Y_k)\cong [X,Y_k]_{\epun},
$$
which furthermore, see Proposition~\ref{rectificacion}, it agrees with $[X,Y_k]^\ray$   since $X$ is cofibrant and each $Y_k$ is fibrant. \end{proof}

\section{Classification of exterior and proper cohomology theories}

In what follows $\cofi^{\mathrm{pair}}$ denotes the category of pairs $(X,A)$ of cofibrant objects of $\E$ where the inclusion $A\hookrightarrow X$ is also a cofibration. As usual, a single object $X\in \cofi$ is identified with the pair $(X,\varnothing)\in \cofi^{\mathrm{pair}}$.

\begin{definition}\label{noredu} An \emph{(extraordinary) exterior cohomology theory} $\calh$ consists of a collection of contravariant functors
$$
\calh^n \colon \cofi^{\mathrm{pair}} \longrightarrow \mathbf{Ab}, \qquad n \in \mathbb{Z},
$$
together with natural transformations
$$
\delta \colon \calh^n(A) \longrightarrow \calh^{n+1}(X,A),
$$
satisfying:

\medskip

{(Homotopy)}  
If $f,g \colon (X,A) \to (Y,B)$ are exterior homotopic maps of pairs, that is, there exists an exterior homotopy
$
H\colon X\timess I \to Y
$
from $f$ to $g$ such that
$
H(A\times I)\subset B$,  then
$$
\calh^*(f) = \calh^*(g) \colon \calh^*(Y,B) \longrightarrow \calh^*(X,A).
$$

\medskip

{(Exactness)}
For every pair $(X,A)\in \cofi^{\mathrm{pair}}$ the sequence
$$
\cdots \longrightarrow \calh^n(X,A) \xlongrightarrow{\calh^n(j)} \calh^n(X) \xlongrightarrow{\calh^n(i)} \calh^n(A) \xrightarrow{\delta} \calh^{n+1}(X,A) \longrightarrow \cdots
$$
is exact. Here $i\colon A\hookrightarrow X$ and $j\colon (X,\varnothing)\to (X,A)$ are the inclusions.

\medskip

{(Additivity)}  
For any family $\{(X_i,A_i)\}_{i \in I}$ of pairs in $\cofi^{\mathrm{pair}}$, the canonical map
$$
\calh^n\bigl({\textstyle\sqcup_{i\in I}} (X_i,A_i)\bigr) \xlongrightarrow{\cong} {\textstyle\prod_{i\in I}} \calh^n(X_i,A_i)
$$
is an isomorphism. Here, the coproduct is taken in $ \E$ and  
$$\sqcup_{i \in I} (X_i,A_i)=
(\sqcup_{i \in I} X_i, \sqcup_{i \in I} A_i).
$$

\medskip

{(Excision)}
Let $i\colon A\hookrightarrow X$ be a cofibration and let
$f\colon A\to B$ be any map in $\E$. The canonical morphism in
$\cofi^{\mathrm{pair}}$
$$
(X,A)\longrightarrow (X\sqcup_A B,B),
$$
induced by the pushout of $i$ along $f$, yields an isomorphism
$$
\calh^n(X\sqcup_A B,B)\xrightarrow{\cong}\calh^n(X,A).
$$
Note that, since cofibrations are stable under pushouts, the canonical
map
$$
B\hookrightarrow X\sqcup_A B
$$
is again a cofibration, and hence
$(X\sqcup_A B,B)\in\cofi^{\mathrm{pair}}$.
\end{definition}

 We have chosen to formulate the excision axiom  in terms of pushouts along cofibrations, rather than quotients, since the ambient category $\E$ is not pointed. This is intrinsic to the homotopical structure of $\E$ and  is the natural analogue of the classical excision property for CW-pairs, where decompositions $X = A \cup B$ correspond to pushout squares.

\begin{definition}\label{cofibra} Given a cofibration $A \hookrightarrow X$ in $\epun$, we define its {\em cofiber} as the object in $\epun$ given by 
$$
X/A=X\sqcup_A \ray.
$$
\end{definition}

Although unreduced theories provide a natural framework for pairs in $\E$, the reduced theory on $\E_*$ will play a central role in what follows.

\begin{definition}\label{redu}
A \emph{(extraordinary) reduced exterior  cohomology theory} $\calhr$ is a collection of contravariant functors
$$
\calhr^n \colon \cofi_* \longrightarrow \mathbf{Ab}, \qquad n \in \mathbb{Z},
$$
satisfying:

\medskip

{(Homotopy)}  
If $f,g\colon X\to Y$ are maps in $\cofi_*$ which are exteriorly homotopic relative to $\ray$, then
$$
\calhr^*(f) = \calhr^*(g) \colon \calhr^*(Y) \longrightarrow \calhr^*(X).
$$

\medskip

{(Exactness)}  
For every cofibration $i \colon A \hookrightarrow X$ in $\cofi_*$, there is a long exact sequence
$$
\cdots \longrightarrow \calhr^n(X/A)
\xrightarrow{\calhr^n(q)}
\calhr^n(X)
\xrightarrow{\calhr^n(i)}
\calhr^n(A)
\xrightarrow{\delta}
\calhr^{n+1}(X/A)
\longrightarrow \cdots,
$$
where $q \colon X \to X/A$ denotes the canonical map.

\medskip

{(Additivity)}  
For any family $\{X_i\}_{i \in I}$ of objects in $\cofi_*$, the canonical map
$$
\calhr^n\bigl({\textstyle\bigvee_{i \in I}} X_i\bigr)
\xrightarrow{\cong}
{\textstyle\prod_{i \in I}} \calhr^n(X_i)
$$
is an isomorphism. Here $\bigvee$ denotes the coproduct in $\epun$.
\end{definition}

\begin{proposition}\label{equivred}
Every exterior extraordinary cohomology theory $\calh$ on $\E$ induces a reduced exterior extraordinary cohomology theory on $\E_*$ by setting
$$
\calhr^*(X)=\calh^*(X,\ray),\quad X\in \cofi_*.
$$
\end{proposition}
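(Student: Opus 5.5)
We check the three axioms of Definition~\ref{redu} for $\calhr^*(X)=\calh^*(X,\ray)$. First, well-definedness and functoriality. For $X\in\cofi_*$ the section $\ray\to X$ is a cofibration in $\E_*$, hence in $\E$. The ray $\ray$ is cofibrant in $\E$ by Corollary~\ref{cw_cofibrant}, applied with the trivial relative complex, or directly by Proposition~\ref{local}. Since $X$ is cofibrant in $\E_*$, the composite $\varnothing\to\ray\to X$ is a cofibration, so $(X,\ray)\in\cofi^{\mathrm{pair}}$. A morphism $f\colon X\to Y$ in $\cofi_*$ preserves the rays and so is a map of pairs $(X,\ray)\to(Y,\ray)$. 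This gives contravariant functors $\calhr^n$.

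\emph{Homotopy.} An exterior homotopy relative to $\ray$ between maps $X\to Y$ sends $\ray\times I$ into $\ray\subset Y$. It is therefore a homotopy of pairs, and the homotopy axiom of $\calh$ applies directly.

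\emph{Additivity.} The coproduct in $\E_*$ is $\bigvee_i X_i=(\sqcup_i X_i)\sqcup_{\sqcup_i\ray}\ray$. The plan is to compare both sides through the pair $(\sqcup_i X_i,\sqcup_i\ray)$:
\[
\calh^n\Bigl(\bigvee_i X_i,\ray\Bigr)\cong\calh^n\Bigl(\textstyle\sqcup_i X_i,\sqcup_i\ray\Bigr)\cong\textstyle\prod_i\calh^n(X_i,\ray).
\]
The first isomorphism is excision applied to the cofibration $\sqcup_i\ray\hookrightarrow\sqcup_i X_i$ along the fold map $\sqcup_i\ray\to\ray$. The second is additivity of $\calh$.

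\emph{Exactness.} This is the main step. Let $i\colon A\hookrightarrow X$ be a cofibration in $\cofi_*$. First apply exactness of $\calh$ to the triple $\ray\subset A\subset X$. The long exact sequence of a triple is derived from the three pair sequences by the standard braid/diagram argument, using only the axioms. The connecting map is $\calh^n(A,\ray)\to\calh^n(A)\xrightarrow{\delta}\calh^{n+1}(X,A)$, and the remaining maps come from the inclusions. This yields
\[
\cdots\to\calh^n(X,A)\to\calh^n(X,\ray)\to\calh^n(A,\ray)\to\calh^{n+1}(X,A)\to\cdots.
\]
It then remains to identify $\calh^n(X,A)$ with $\calhr^n(X/A)=\calh^n(X\sqcup_A\ray,\ray)$. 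Excision for the cofibration $A\hookrightarrow X$ along the retraction $r\colon A\to\ray$ gives exactly $\calh^n(X\sqcup_A\ray,\ray)\cong\calh^n(X,A)$, induced by $q$. The pair lies in $\cofi^{\mathrm{pair}}$ because pushouts preserve cofibrations.

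Finally, check that under these identifications the first map of the sequence is $\calhr^n(q)$ and the second is $\calhr^n(i)$. This holds by naturality, since both maps are induced by the evident maps of pairs $(X,\ray)\to(X,A)\to(X/A,\ray)$. The connecting homomorphism $\delta$ of $\calhr$ is defined as the composite above. The only delicate points are the construction of the triple sequence and the cofibrancy checks; the rest is formal.
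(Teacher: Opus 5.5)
Your proof is correct and follows the paper's route. Excision for $A\hookrightarrow X$ along the retraction $A\to\ray$ identifies $\calh^n(X,A)$ with $\calh^n(X/A,\ray)$, and a long exact sequence then gives exactness. You are in fact more careful than the paper in three places:
\begin{itemize}
\item The paper just asserts that the pair sequence of $(X,A)$ ``identifies with'' the reduced one, implicitly using the splitting $\calh^n(X)\cong\calh^n(X,\ray)\oplus\calh^n(\ray)$ given by the retraction. You instead derive the triple sequence of $\ray\subset A\subset X$.
\item You spell out additivity via excision along the fold map $\sqcup_i\ray\to\ray$, where the paper only says it is immediate.
\item You check that $(X,\ray)\in\cofi^{\mathrm{pair}}$, which the paper does not.
\end{itemize}

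One small slip: Corollary~\ref{cw_cofibrant} only gives cofibrancy in $\E_*$, which is trivial for $\ray$, so it does not show that $\ray$ is cofibrant in $\E$. The fix is short: $\varnothing\to\N$ is the generating cofibration $\bsph^{-1}\to\bdisk^0$, and $\ray$ (with the cocompact externology) is obtained from $\N$ by attaching the $1$-cells $[k,k+1]$, which is Proposition~\ref{local} with $Y=\N$.
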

\begin{proof}
Given an exterior extraordinary cohomology theory $\calh$ on $\E$, define a reduced theory $\calhr$ on $\epun$ by
$$
\calhr^n(X)=\calh^n(X,\ray).
$$
The homotopy and additivity axioms for $\calhr$ are immediate from the corresponding axioms for $\calh$. Moreover, for every cofibration $A\hookrightarrow X$ in $\cofi_*$, the exactness axiom for $\calh^*$ together with excision yields a natural isomorphism
\begin{equation}\label{hcofibra}
\calh^n(X/A,\ray)\cong \calh^n(X,A).
\end{equation}
Indeed, $(X/A,\ray)=(X\sqcup_A \ray,\ray)$, and the canonical morphism of pairs
$$
(X,A)\longrightarrow (X\cup_A\ray,\ray)
$$
induces an isomorphism by excision.
 Hence the long exact sequence on $\calh$ for the pair $(X,A)$ identifies with
$$
\cdots \longrightarrow \calhr^n(X/A)
\longrightarrow \calhr^n(X)
\longrightarrow \calhr^n(A)
\xrightarrow{\delta}
\calhr^{n+1}(X/A)
\longrightarrow \cdots.
$$
\end{proof}

\begin{theorem}\label{repre} 
Every reduced exterior extraordinary cohomology theory $\calhr^*$ is representable by an exterior $\Omega$-spectrum $E\in\spece$. That is, for every $X\in \cofi_*$ and every $n\ge 0$, there are natural isomorphisms
$$
\calhr^n(X)\cong [X,E_n]_\epun.
$$
\end{theorem}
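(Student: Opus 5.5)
The plan is to deduce the statement from a Brown representability theorem for the homotopy category $\Ho(\epun)$, applied to each functor $\calhr^n$ separately, and then to assemble the resulting representing objects into an $\Omega$-spectrum using suspension isomorphisms.

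\textbf{Step 1: make $\calhr^n$ a functor on the homotopy category.} By the homotopy axiom and Proposition~\ref{rectificacion}, each $\calhr^n$ factors through $[\cofi_*]$, the category of cofibrant objects modulo exterior homotopy under $\ray$. This category is equivalent to $\Ho(\epun)$ after fibrant replacement. Here I use that $\epun$ is simplicial, so cylinders $X\otimes_\ray\Delta[1]$ give homotopies under $\ray$, and that fibrant replacement of a cofibrant object stays cofibrant. Thus we get contravariant functors $\Ho(\epun)\to\catab$.

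\textbf{Step 2: check Brown's axioms.} The target is the abstract Brown representability theorem for the homotopy category of a pointed, left proper, cellular (or compactly generated) simplicial model category, for instance Jardine's version or Heller's.
\begin{enumerate}
\item Wedge axiom: this is exactly the additivity axiom, since the coproduct in $\Ho(\epun)$ of cofibrant objects is $\bigvee$.
\item Mayer--Vietoris axiom for homotopy pushouts: take a homotopy pushout $B\sqcup_A C$ with $A\hookrightarrow B$ a cofibration. Exactness for the cofibrations $A\to B$ and $C\to B\sqcup_A C$ gives long exact sequences whose cofibers agree, since $(B\sqcup_A C)/C\cong B/A$. The standard diagram chase then gives the weak Mayer--Vietoris surjectivity.
\item Compact generators: the set $\{\bsphc^k\}$ together with cofibrant replacements of the domains and codomains of the generating cofibrations of Remark~\ref{nuevacof}. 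These are $\omega$-small by Proposition~\ref{celular}, and every cofibrant object is a retract of a cell complex built from them, which gives the telescope/colimit condition.
\end{enumerate}
Brown's theorem then produces fibrant–cofibrant objects $E_n\in\epun$ with $\calhr^n(X)\cong[X,E_n]_{\epun}$, naturally in $X\in\cofi_*$.

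\textbf{Step 3: build the suspension isomorphism and the structure maps.} Apply exactness to the cofibration $X\hookrightarrow CX$. Since $CX$ is exteriorly contractible under $\ray$, we have $\calhr^*(CX)=0$, and $CX/X\cong\Sigma X$ by Definition~\ref{suspen}. This yields a natural isomorphism $\calhr^n(X)\cong\calhr^{n+1}(\Sigma X)$. Combining it with the Quillen adjunction of Corollary~\ref{adjuncion} gives natural isomorphisms
$$[X,E_n]\cong[\Sigma X,E_{n+1}]\cong[X,\Omega E_{n+1}]$$
on cofibrant $X$. By Yoneda in $\Ho(\epun)$ these come from a weak equivalence $E_n\to\Omega E_{n+1}$. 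Using cofibrancy of $E_n$ and fibrancy of $\Omega E_{n+1}$, we rectify this homotopy class to an actual map in $\epun$ via Proposition~\ref{rectificacion}. Its adjoints $\Sigma E_n\to E_{n+1}$ are the structure maps, so $E$ is an exterior $\Omega$-spectrum.

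\textbf{Main obstacle.} The hardest step is verifying the hypotheses of abstract Brown representability, and specifically the compact-generation / detection condition: that a map in $\Ho(\epun)$ inducing bijections on $[G,-]$ for all generators $G$ is an isomorphism. The generators here are not only the spheres $\bsphc^k$ but the twisted objects $A\sqcup\ray$ of Remark~\ref{nuevacof}, with varying maps $q$. Weak equivalences are detected by $\pi_*^e$ at all discrete rays, not only at the distinguished one.

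I would first try to use directly the general theorem for cellular, left proper, simplicial pointed model categories whose generating cofibrations have $\omega$-compact domains. That theorem handles detection through $\cel(\cali)$ approximations and avoids explicit homotopy-group arguments. If a direct group-level argument is needed instead, I would reduce to the generator maps $\bsph^{k}\sqcup\ray\to\ray$, which encode arbitrary discrete rays. This reduction uses Remark~\ref{continuog} together with an analogue of Lemma~\ref{translation}.

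A secondary subtlety is that Brown's theorem gives representability only of a functor of one variable, with group structures coming from cogroup objects. This is handled by working with $\calhr^n(X)\cong\calhr^{n+1}(\Sigma X)$, so that the group structure agrees with the one induced from the suspension.
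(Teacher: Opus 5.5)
Your outline is the paper's proof. Additivity gives the wedge axiom, and exactness plus $(X\cup_A Y)/Y\cong X/A$ gives Mayer--Vietoris. A Brown-type argument then produces each $E_n$ separately. Finally, the cofibration $X\hookrightarrow CX$ with $CX/X\cong\Sigma X$ gives $[X,E_n]_{\epun}\cong[X,\Omega E_{n+1}]_{\epun}$, hence $E_n\simeq\Omega E_{n+1}$ by Yoneda.

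You diverge only in how $E_n$ is produced. The paper runs Brown's cellular construction directly on relative $\mathcal I$-cell complexes and asserts, in one sentence, that it adapts verbatim once the zero-dimensional generators are included. You instead appeal to an abstract theorem and leave its detection hypothesis open. That is a genuine gap, since it is the only non-formal step.

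Your first suggested way of closing it does not work. There is no Brown representability theorem available under just the hypotheses "pointed, left proper, cellular, simplicial, with compact generating domains". The abstract versions of unstable Brown representability assume a set of \emph{cogroup} objects whose homotopy classes detect isomorphisms. This is why the classical theorem is stated for connected pointed complexes.

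Here that hypothesis is exactly what is in question.
\begin{itemize}
\item The $\bsphc^k$, $k\ge1$, are cogroups (Proposition~\ref{suses}). However, by Remark~\ref{continuog} they only see $\pi_*^e$ at the distinguished ray, while exterior weak equivalences involve every discrete ray.
\item The twisted generators of Remark~\ref{nuevacof} do not help. For fibrant $Y$, Proposition~\ref{rectificacion} gives $[\bsph^k\sqcup\ray,Y]_{\epun}\cong[\bsph^k,Y]$, the free exterior homotopy classes, independent of $q$. These carry no group structure and see $\pi_k^e(Y,\beta)$ only modulo the action of $\pi_1^e(Y,\beta)$. This is the exterior analogue of the non-connected case.
\item Your fallback via Lemma~\ref{translation} is no substitute either, since the object built by Brown's construction is not known to be a loop space.
\end{itemize}

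What is needed is an exterior version of the classical connectivity device. Your closing remark about $\calhr^{n+1}(\Sigma X)$ points toward it:
\begin{itemize}
\item Run Brown's argument on a class of cofibrant objects in which every discrete ray is exteriorly homotopic to the distinguished one. By Remark~\ref{homoequi}(iii), the cogroups $\bsphc^k$ then detect weak equivalences on that class.
\item Check that this class contains all suspensions and is stable under wedges and under the cell attachments used in the construction.
\item Represent $\calhr^{n+1}$ there by some $E'_{n+1}$, and set $E_n=\Omega E'_{n+1}$ via $\calhr^n(X)\cong\calhr^{n+1}(\Sigma X)$.
\end{itemize}

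A smaller point concerns Step 1. Cofibrant objects modulo homotopy under $\ray$ do not model $\Ho(\epun)$ unless the targets are fibrant. What you actually need is that $\calhr^n$ inverts weak equivalences of $\cofi_*$, and this does hold. By Ken Brown's lemma and exactness, it suffices to show $\calhr^*(Z)=0$ whenever $\ray\to Z$ is a trivial cofibration. Since every object of $\E^{\ray}$ is fibrant, $\id_Z$ is then exteriorly homotopic relative to $\ray$ to the composite $Z\to\ray\to Z$, which gives the vanishing.
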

 For $n<0$, the groups $\calhr^n(X)$ are also determined by the suspension isomorphisms
$$
\calhr^n(X)\cong \calhr^0(\Sigma^{-n}X).
$$
\begin{proof}The additivity axiom shows that $\calhr^n$ sends arbitrary coproducts to products. Moreover, exactness implies the Brown Mayer--Vietoris condition: for every pushout square of cofibrations
$$
\xymatrix{
A \ar[r] \ar[d] & X \ar[d] \\
Y \ar[r] & X\cup_A Y ,
}
$$
the canonical map
$
\calhr^n(X\cup_A Y)\to
\calhr^n(X)\times_{\calhr^n(A)}\calhr^n(Y)
$
is surjective. This follows by applying the long exact sequences associated to the cofibrations
$$
A\hookrightarrow X
\qquad\text{and}\qquad
Y\hookrightarrow X\cup_A Y,
$$
together with the natural identification
$$ 
(X\cup_A Y)/Y\cong X/A.
$$
 Hence, 
since every object of $\cofi_*$ is a retract of a relative $\mathcal I$-cell complex, Brown's cellular argument \cite{brown2} can be carried out using the generating cofibrations in $\mathcal I$ to build the cellular objects. In fact, Brown's inductive construction adapts verbatim once generators in dimension zero are included in the initial step and thus, no connectedness  assumption is required, since the generators in dimension zero account for the connectivity data. Therefore, there exists an object $E_n\in\epun$ together with natural isomorphisms: 
$$
\calhr^n(X)\cong [X,E_n]_\epun
\qquad X\in\cofi_*.
$$

To assemble the objects $E_n$ into an exterior spectrum note first that, for each $X\in \cofi_*$, its cone $CX$ lies also in $\cofi_*$ and the canonical map $X\hookrightarrow CX$ is a cofibration. Moreover, the suspension $\Sigma X$ is naturally identified with the cofiber of this map, that is,
$
\Sigma X \cong CX / X$.
Applying the exactness axiom to this cofibration  yields natural suspension isomorphisms
$$
\calhr^n(X)\cong \calhr^{n+1}(\Sigma X)
$$
which, transported through the representability bijections, provide natural isomorphisms
$$
[X,E_n]_\epun\cong [\Sigma X,E_{n+1}]_\epun \cong [X,\mathbf \Omega E_{n+1}]_\epun.
$$
Hence, by Yoneda's lemma 
 $$ E_n\cong\mathbf \Omega E_{n+1} $$ in $\Ho(\epun)$. Choosing fibrant representatives for the $E_n$, these isomorphisms may be represented by weak equivalences $$ E_n\xrightarrow{\simeq}\Omega E_{n+1}
  $$
  and therefore the sequence $E=\{E_n\}_{n\geq0}$ forms an exterior $\Omega$-spectrum representing $\calhr^*$.

 The values of $\calhr^n$ in negative
degrees are then recovered from the suspension isomorphisms:
$$
\calhr^n(X)\cong \calhr^0(\Sigma^{-n}X), \qquad n<0.
$$
Equivalently, using repeatedly the adjunction
$\Sigma\dashv\Omega$, one obtains natural isomorphisms
$$
\calhr^n(X)\cong [X,\Omega^{-n}E_0]_\epun,
\qquad n<0.
$$
\end{proof}

As an immediate consequence of Theorem \ref{homoto}, the cohomology theory $\calhr$ is recovered as the stable homotopy groups of the mapping spectrum into $E$:

\begin{corollary}\label{cororepre} For every $X\in\cofi_*$ there are natural isomorphisms
$$
\pi_*^{st}\calf(X,E)\cong \calhr^*(X).
$$
\hfill$\square$
\end{corollary}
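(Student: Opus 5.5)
The plan is to combine Theorem~\ref{repre} with Theorem~\ref{homoto}, keeping track of degrees, including negative ones. Let $E\in\spece$ be the exterior $\Omega$-spectrum produced by Theorem~\ref{repre}. As constructed there, each $E_n$ is fibrant and the adjoint structure maps $E_n\to\Omega E_{n+1}$ are weak equivalences, so $E$ is fibrant in the stable model structure. Theorem~\ref{homoto} therefore applies to any cofibrant $X\in\cofi_*$ and $Y=E$. It gives, for every $k\in\Z$,
$$
\pi_k^{st}\calf(X,E)\cong[X,E_k]_{\epun},
$$
where $E_k=\Omega^{-k}E_0$ for $k<0$.

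\emph{Non-negative degrees.} For $k\ge 0$ the isomorphism $[X,E_k]_{\epun}\cong\calhr^k(X)$ of Theorem~\ref{repre} closes the argument. For naturality in $X$, note that both isomorphisms are natural in $X$. In Theorem~\ref{homoto} the identification is $\pi_0\Map_{\E_*}(X,E_k)\cong[X,E_k]_{\epun}$, which is natural in maps of cofibrant objects. In Theorem~\ref{repre} naturality is part of the statement.

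\emph{Negative degrees.} For $k<0$ we use the convention stated after Theorem~\ref{repre}: $\calhr^k(X)\cong[X,\Omega^{-k}E_0]_{\epun}$. This follows by iterating the suspension isomorphisms and the adjunction $\Sigma\dashv\Omega$, which is legitimate on homotopy classes because $\Sigma\dashv\Omega$ is a Quillen pair (Corollary~\ref{adjuncion}) and $X$ is cofibrant. Two things need checking here. First, $\Omega^{-k}E_0$ is fibrant, since $\Omega$ is right Quillen. Second, the indexing convention $E_k=\Omega^{-k}E_0$ in Theorem~\ref{homoto} must agree with the one used in Theorem~\ref{repre}. Once both hold, the two isomorphisms compose directly.

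\emph{Compatibility with the suspension structure.} The only potential obstacle is ensuring that the isomorphisms assemble compatibly in $k$. Concretely, the map induced on $\pi_*^{st}$ by the structure maps of $\calf(X,E)$ should correspond to the suspension isomorphism $\calhr^k(X)\cong\calhr^{k+1}(\Sigma X)$. I would check this with two ingredients. The structure maps of $\calf(X,E)$ come from $E_k\to\Omega E_{k+1}$ through Corollary~\ref{coropun}(i) and Proposition~\ref{lupita}. The weak equivalences $E_k\to\Omega E_{k+1}$ were themselves chosen in Theorem~\ref{repre} to realise the suspension isomorphism under Yoneda. Since the corollary only asserts degreewise natural isomorphisms, this compatibility is a refinement rather than a requirement. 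So the statement follows directly once the degree conventions are matched.
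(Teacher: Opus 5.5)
Your proposal is correct and is the paper's own (implicit) argument: apply Theorem~\ref{homoto} to the $\Omega$-spectrum $E$ of Theorem~\ref{repre}, then compose with the representing isomorphisms $[X,E_k]_{\epun}\cong\calhr^k(X)$, using $\calhr^k(X)\cong[X,\Omega^{-k}E_0]_{\epun}$ for $k<0$. One small caveat is inherited from the statement of Theorem~\ref{homoto}: under the standard convention $\pi_k^{st}Z=\varinjlim_n\pi_{k+n}Z_n$ (the same one the paper uses for $\pi_k^{s,e}$), an $\Omega$-spectrum satisfies $\pi_0 Z_k\cong\pi_{-k}^{st}Z$, so the correct degree matching is $\calhr^k(X)\cong\pi_{-k}^{st}\calf(X,E)$, which is the classical indexing.
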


\begin{rem}\label{cohoho}
Every reduced extraordinary cohomology theory $\calhr$ on cofibrant objects extends naturally to a family of contravariant functors
$$
\calhr^n\colon \E_* \longrightarrow \mathbf{Ab}, \qquad n\in\mathbb{Z},
$$
by setting
$$
\calhr^n(X)=\calhr^n(QX),
$$
for any cofibrant replacement $QX \xrightarrow{\sim} X$. Since $\calhr$ is represented by an $\Omega$-spectrum, it sends weak equivalences between cofibrant objects to isomorphisms, see Theorem~\ref{white}. Hence the definition is independent of the chosen cofibrant replacement. 

Furthermore, if $\calhr$ is represented by the $\Omega$-spectrum $E\in\spece$, Proposition \ref{rectificacion} provides natural isomorphisms
$$
\calhr^n(X)
\cong [QX,E_n]_\epun\cong  [QX,E_n]^\ray,\quad X\in\epun.
$$
\end{rem}

 All of he above can be carried over   to the proper setting.

\begin{definition}\label{cohopropia}
In view of the full embedding~(\ref{fullembed}), consider the subcategory of $\E_*$ defined by
$$
\PP_*=\PP\cap \cofi_*.
$$
That is, its objects are proper spaces (i.e. exterior spaces endowed with the cocompact externology) which are cofibrant, and its morphisms are proper maps preserving the retractive structure over $\ray$.
\end{definition}

\begin{definition}\label{propercohomo} In this context, a \emph{(reduced) proper cohomology theory} is defined as the restriction to $\PP_*$ of a given reduced exterior cohomology theory.
\end{definition}

\begin{proposition}\label{proinva} Any proper cohomology theory is an invariant of proper homotopy under $\ray$.
\end{proposition}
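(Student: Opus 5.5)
The plan is to reduce the claim to the homotopy axiom of the underlying reduced exterior cohomology theory, using the full embedding of the proper homotopy category into the exterior one. By Definition~\ref{propercohomo}, a proper cohomology theory is the restriction to $\PP_*$ of some reduced exterior cohomology theory $\calhr^*$ defined on $\cofi_*$. So let $f,g\colon X\to Y$ be morphisms in $\PP_*$, that is, proper maps between cofibrant proper spaces compatible with the retractive structure over $\ray$. Suppose $f$ and $g$ are properly homotopic under $\ray$, meaning there is a proper map $H\colon X\times I\to Y$ with $H_0=f$, $H_1=g$ and $H(s(r),t)=s(r)$ for every $r\in\ray$. We must show that $\calhr^*(f)=\calhr^*(g)$.

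The key step is to translate the proper homotopy $H$ into an exterior homotopy relative to $\ray$. By (\ref{prodcom}) we have $X_{cc}\timess I=(X\times I)_{cc}$. Since the embedding (\ref{fullembed}) is fully faithful, the proper map $H$ is therefore an exterior map $X\timess I\to Y$. Each $H_t$ fixes the section ray, so $H$ is an exterior homotopy relative to $\ray$ in the sense of \S\ref{prelimi2}. The homotopy axiom of Definition~\ref{redu} then applies directly, because $X,Y\in\cofi_*$ and $f,g$ are morphisms in $\cofi_*$. It gives $\calhr^*(f)=\calhr^*(g)$.

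From this the invariance statement follows in the usual way. If $f\colon X\to Y$ is a proper homotopy equivalence under $\ray$ with inverse $g$, functoriality yields $\calhr^*(g)\calhr^*(f)=\calhr^*(fg)=\calhr^*(\id)$, and likewise in the other order. Hence $\calhr^*(f)$ is an isomorphism.

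The main obstacle is a subtlety in this last step: the homotopy inverse $g$ need only be a map under $\ray$, not over it, so it may not be a morphism of $\PP_*$ at all. I would handle this with Proposition~\ref{rectificacion}, after first replacing $Y$ fibrantly if necessary. Alternatively, I would interpret $\calhr^*$ on $X$ through the identification $\calhr^n(X)\cong[X,E_n]^\ray$ of Remark~\ref{cohoho}, where $E$ is the representing $\Omega$-spectrum from Theorem~\ref{repre}. This identification is functorial in maps under $\ray$ and visibly invariant under homotopy under $\ray$. I expect this careful matching of under/over morphisms to be the only nontrivial point; everything else is the formal argument above.
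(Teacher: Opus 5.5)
\paragraph{Comparison with the paper's proof.} Your argument is correct, and its core differs from the paper's at one point. Both proofs use (\ref{prodcom}) and the full embedding (\ref{fullembed2}) to turn the proper homotopy $H$ into an exterior homotopy under $\ray$.

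The paper then invokes Proposition~\ref{rectificacion} to conclude that $f$ and $g$ are homotopic in $\E_*$, i.e.\ through a homotopy that is also over $\ray$, before applying the homotopy axiom. This forces it to assume $Y$ fibrant. That hypothesis is not available for a general object of $\PP_*$ (cf.\ Remark~\ref{fibrant}), and the paper does not treat the non-fibrant case.

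You observe instead that the homotopy axiom of Definition~\ref{redu} is already stated for exterior homotopies relative to $\ray$. In the sense of \S\ref{prelimi2}, these are homotopies through maps under $\ray$, so the axiom applies to $H$ directly. This needs neither fibrancy nor rectification, and given the definition as written it is the cleaner argument. The paper's detour is what one would need if the homotopy axiom were read as invariance under homotopy in $\E_*$ itself.

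\paragraph{Your extension to homotopy equivalences.} Your last paragraph goes beyond what the paper proves, and of your two fixes only one works as stated. The route through Remark~\ref{cohoho} and Theorem~\ref{repre} is sound. The identification $\calhr^n(-)\cong[-,E_n]^{\ray}$ is natural for morphisms of $\E_*$, and $[-,E_n]^{\ray}$ is functorial for maps under $\ray$ and invariant under homotopy under $\ray$. Hence $g^*$ inverts $f^*$ even though $g$ need not be a morphism of $\PP_*$. The first suggestion is misaimed: rectifying $g\colon Y\to X$ via Proposition~\ref{rectificacion} requires the target $X$, not $Y$, to be fibrant.
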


\begin{proof}
By the full embedding~(\ref{fullembed2}), two maps in $\PP_*$ are properly homotopic under $\ray$ if and only if they are exteriorly homotopic under $\ray$. If $Y$ is fibrant, the identification in Proposition \ref{rectificacion}
$$
[X,Y]_{\E_*}\cong [X,Y]^{\ray},
$$
implies that they are homotopic in $\E_*$. The result then follows from the homotopy invariance of exterior cohomology theories.
\end{proof}

\section{From classical cohomology theories}

Our aim in this final section is to illustrate the scope of the general theory developed above by deriving exterior cohomology theories from classical ones. This provides a broad class of examples which shows how  the representability theorem applies in concrete situations and how the resulting theories admit natural interpretations within the framework of proper homotopy theory. The present framework places the main results of \cite{gargarmu3} into a stable context.

Recall, see for instance \cite[\S18.1]{may}, that an (extraordinary) {\em cohomology theory} is a collection of contravariant functors
$$
h^n \colon \cw^{\mathrm{pair}} \longrightarrow \mathbf{Ab},
\qquad n \in \mathbb{Z},
$$
together with natural transformations
$$
\delta \colon h^n(A) \longrightarrow h^{n+1}(X,A),
$$
satisfying the homotopy, exactness, additivity, and excision axioms as formulated in Definition~\ref{noredu}, replacing $\cofi^{\mathrm{pair}}$ by $\cw^{\mathrm{pair}}$ and exterior homotopy by ordinary homotopy. A straightforward exercise shows that our formulation of the excision axiom is equivalent to the one in \emph{op.\ cit.}, namely: if $X$ is the union of subcomplexes $A$ and $B$, then the inclusion
$$
(A,A\cap B)\longrightarrow (X,B)
$$
induces an isomorphism on $h^*$.

On the other hand, see \cite[\S19.2]{may}, an (extraordinary) {\em reduced cohomology theory} is a collection of contravariant functors
$$
\hre^n \colon \cw_* \longrightarrow \mathbf{Ab},
\qquad n \in \mathbb{Z},
$$
satisfying the homotopy, exactness, and additivity axioms appearing in Definition~\ref{redu}, replacing $\cofi_*$ by $\cw_*$ and exterior homotopy relative to $\ray$ by based homotopy. Again, it is easy to check that  the exactness axiom is equivalent to the exactness and suspension axioms as formulated in \emph{op.\ cit.} 

As stated in \cite[\S18,19]{may} a cohomology theory on $\cw^{\mathrm{pair}}$ determines and is determined by a reduced cohomology theory on $\cw_*$.  Furthermore, by standard facts on CW-approximation, these notions descend, respectively, to the corresponding ones of non-reduced and reduced cohomology theories on the categories $\topopair$ and $\topo_*$ of pairs of spaces and well-based spaces. In this general setting, the excision axiom is formulated in terms of excisive triads, and the homotopy axiom is replaced by invariance under weak equivalences in the categories of pairs and based spaces. 

From now on we will also assume that every cohomology theory $h$ considered is {\em continuous}. That is, for every triad of spaces $(X,A,B)$ in which $(A,B)$ is a closed pair,
the natural map
$$
\varinjlim_{(U,V)} h^*(U,V)
\stackrel{\cong}{\longrightarrow}
h^*(A,B)
$$
is an isomorphism, where $(U,V)$ runs through the neighbourhoods of $(A,B)$. 

To give the reader a broader perspective, we mention that Alexander--Spanier cohomology is a continuous cohomology theory, whereas singular cohomology is generally not continuous. On the other hand, \v{C}ech cohomology is continuous but does not, in general, satisfy the exactness axiom required of a cohomology theory on arbitrary spaces. However, on HLC (homologically locally connected) spaces, a hypothesis substantially weaker than the ANR condition, all three theories agree \cite[Chap.~III]{bre}.

We will also need the extension of the Alexandroff compactification to the exterior category:

\begin{definition}\cite[\S1]{garcal} The {\em Alexandroff exterior construction} is  the functor 
$$
(-)^\infty\colon \E\longrightarrow\topo^*
$$
which assigns to an exterior space $X$, the pointed space $X^\infty=X\sqcup\{\infty\}$ with basepoint $\infty$, and extending the topology of $X$ by declaring the sets $\{E\cup\{\infty\},\,E\in\ext\}$ to be open.
\end{definition}

\begin{definition}\cite[Def.~3.1]{gargarmu3}
Let $h$ be a cohomology theory. For each $n\in\Z$, define the contravariant functor
$$
\calhr^n\colon \E \longrightarrow \catab,\quad \calhr^n(X)=\varinjlim_{E\in\ext}h^n(X,E),
$$
where $\ext$ denotes the externology of $X$. Note that  no cofibrancy or
retractability assumptions are required on $X$.
\end{definition}

In \cite{gargarmu3} we established only those properties of these functors required to apply the classical Brown representability theorem. Therefore, some additional verifications are still needed in order to conclude:

\begin{theorem}\label{clasicacoho}
The collection of functors $\{\calhr^n\}_{n\in\Z}$ form a (reduced) exterior cohomology theory.
\end{theorem}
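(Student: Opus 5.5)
The plan is to identify the functors $\calhr^n$ with reduced classical cohomology of the Alexandroff exterior construction wherever this is harmless. For the one axiom where that identification behaves badly, additivity, I will work directly with the colimit definition.

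\emph{Step 1: identification with $X^\infty$.} For $X\in\E$ with externology $\ext$, the open neighbourhoods of $\infty$ in $X^\infty$ are exactly the sets $E\cup\{\infty\}$ with $E\in\ext$. Excision of the point $\infty$ gives $h^n(X,E)\cong h^n(X^\infty,E\cup\{\infty\})$, naturally in $E$. Continuity of $h$, applied to the closed pair $(\{\infty\},\{\infty\})$ and its neighbourhoods, then yields
$$
\calhr^n(X)=\varinjlim_{E\in\ext}h^n(X,E)\cong \varinjlim_{E}h^n(X^\infty,E\cup\{\infty\})\cong h^n(X^\infty,\{\infty\})=\hre^n(X^\infty),
$$
naturally in exterior maps. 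For $X\in\E_*$ the ray gives a subspace $\ray^\infty=[0,\infty]\subset X^\infty$, which is contractible. Hence $\hre^n(X^\infty)\cong h^n(X^\infty,[0,\infty])$, so it does not matter whether the theory is read as absolute or relative to $\ray$.

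\emph{Step 2: homotopy and exactness.} An exterior homotopy $H\colon X\timess I\to Y$ relative to $\ray$ induces a continuous map $X^\infty\times I\to Y^\infty$ sending $\{\infty\}\times I$ to $\infty$. The point is that, since $I$ is compact, the exterior sets of $X\timess I$ contain $E\times I$, so neighbourhoods of $\{\infty\}\times I$ behave correctly. This map is a based homotopy, and the homotopy axiom follows from that of $h$.

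For a cofibration $A\hookrightarrow X$ in $\cofi_*$ we have $(X/A)^\infty=X^\infty\cup_{A^\infty}[0,\infty]$. It suffices to treat relative $\mathcal I$-cell complexes, since cofibrations are retracts of these. For them, $A^\infty\to X^\infty$ is a closed inclusion, obtained by attaching copies of $(\N\timess D^n)^\infty$, which is an NDR-type pair. Collapsing the contractible $[0,\infty]$ then identifies $\hre^n((X/A)^\infty)$ with $h^n(X^\infty,A^\infty)$, and the long exact sequence of the pair $(X^\infty,A^\infty)$ gives the exactness axiom. Where the pair is not obviously good, I will fall back on continuity of $h$, applied to neighbourhoods of $A^\infty$, to justify the excision.

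\emph{Step 3: additivity, the main obstacle.} The Alexandroff construction of an infinite wedge $\bigvee_i X_i$ in $\E_*$ is \emph{not} the wedge of the $X_i^\infty$. The exterior sets of the coproduct are unions of exterior sets $E_i\subset X_i$, so the topology at $\infty$ is product-like, and Step 1 does not reduce the question to additivity of $h$.

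I will instead use the colimit definition directly. Let $\widetilde X=\bigsqcup_i X_i$ be the coproduct in $\E$. Its exterior sets are cofinally the unions $\bigsqcup_i E_i$, indexed by the product of the directed posets $\ext_i$. Additivity of $h$ gives $h^n(\widetilde X,\sqcup E_i)\cong\prod_i h^n(X_i,E_i)$. I will then check by hand that the colimit over a product of directed posets of a product of diagrams is the product of the colimits:
\begin{itemize}
\item for surjectivity, choose representatives coordinatewise;
\item for injectivity, an element vanishing in each colimit vanishes at some $E_i'$ in each coordinate, hence at $(E_i')_i$.
\end{itemize}
This gives additivity for $\calhr^n(\widetilde X)$.

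Finally, $\bigvee_i X_i$ is obtained from $\widetilde X$ by the pushout identifying the rays $\bigsqcup_i\ray\to\ray$. By the same colimit argument, $\calhr^n(\bigsqcup_i\ray)=\prod_i\calhr^n(\ray)=0$. The exactness already established in Step 2, applied to this pushout, then identifies $\calhr^n(\bigvee_i X_i)\cong\calhr^n(\widetilde X)\cong\prod_i\calhr^n(X_i)$.
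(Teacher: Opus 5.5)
Your overall plan follows the paper's. You identify $\calhr^n(X)$ with $h^n(X^\infty,\infty)$, which the paper quotes from \cite[Thm.~3.1]{gargarmu3}. You get homotopy invariance from $(X\timess I)^\infty\cong(X^\infty\times I)/(\infty\times I)$. You treat exactness through the pushout $(X/A)^\infty=X^\infty\cup_{A^\infty}[0,\infty]$. There is a small slip in Step~1: continuity must be applied to $(\{\infty\},\varnothing)$ and combined with the five lemma on the sequences of the pairs $(X^\infty,E\cup\{\infty\})$; applied to $(\{\infty\},\{\infty\})$ it gives nothing.

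\textbf{The gap is in exactness.} The pairs $\bigl((\N\timess D^n)^\infty,(\N\timess S^{n-1})^\infty\bigr)$ are \emph{not} NDR. In this compact space the disks $D_k=\{k\}\times D^n$ are clopen and accumulate at $\infty$. Suppose $R$ retracts $X^\infty\times I$ onto $X^\infty\times 0\cup A^\infty\times I$. Then $R$ maps each connected set $D_k\times I$ into the clopen piece $D_k\times 0\cup S_k\times I$. Continuity at the fixed point $(\infty,1)$ forces $R(D_k\times 1)\subset X^\infty\times(\tfrac12,1]$ for large $k$, hence $R(D_k\times 1)\subset S_k\times(\tfrac12,1]$. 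Projecting gives a retraction $D^n\to S^{n-1}$, which is impossible for $n\ge 1$. For the same reason no neighbourhood of $A^\infty$ even retracts onto $A^\infty$, since every neighbourhood contains whole disks. For $n=0$, $(\N^\infty,\infty)$ is not well pointed.

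So $(X^\infty,A^\infty)$ is in general not a good pair, the reduction to cell complexes gains nothing, and your continuity ``fallback'' is the entire argument. Written out, it handles every cofibration at once:
\begin{itemize}
\item Since $A$ is closed in $X$, continuity for $(A^\infty,\varnothing)$ plus the five lemma gives $h^*(X^\infty,A^\infty)\cong\varinjlim_U h^*(X^\infty,U)$, over open $U\supseteq A^\infty$.
\item Each such $U$ is saturated for $q\colon X^\infty\to Y=(X/A)^\infty$. Hence $U\mapsto q(U)$ is a bijection onto the open neighbourhoods of $[0,\infty]$ in $Y$, with inverse $q^{-1}$.
\item Since $q$ maps $X\setminus A$ homeomorphically onto $Y\setminus[0,\infty]$, excising the closed sets $A^\infty$ and $[0,\infty]$ gives $h^*(Y,q(U))\cong h^*(X^\infty,U)$.
\item Continuity in $Y$ then gives $\varinjlim_U h^*(Y,q(U))\cong h^*(Y,[0,\infty])\cong\hre^*(Y)$.
\item The triple $(X^\infty,A^\infty,\infty)$ yields the long exact sequence.
\end{itemize}

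The paper instead goes through $X^\infty/A^\infty$, using left properness of $\topo^*$ and a weak equivalence. That route still needs $\hre^*(X^\infty/A^\infty)\cong h^*(X^\infty,A^\infty)$ for the same non-good pair. It also needs invariance under weak equivalences, which continuous theories such as Alexander--Spanier lack. Once completed, your relative-group argument avoids both problems.

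\textbf{Additivity.} The paper only cites \cite[Lemma 4.2]{gargarmu3}; your direct proof is correct once exactness is secured. The colimit over the product poset $\prod_i\ext_i$ does commute with the product. Exactness for $\bigsqcup_i\ray\hookrightarrow\bigsqcup_i X_i$ (section from one summand, fold maps as retractions), together with $\calhr^*(\bigsqcup_i\ray)=0$, then passes the result to $\bigvee_i X_i$.

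Your stated motivation is misdiagnosed, though. Since $(-)^\infty$ is a left adjoint, $(\bigsqcup_i X_i)^\infty$ is exactly the quotient wedge $\bigvee_i X_i^\infty$. Its neighbourhoods of the wedge point are precisely your ``product-like'' ones. What actually blocks quoting the wedge axiom of $h$ is that the spaces $X_i^\infty$ need not be well pointed, and your direct argument sidesteps this.
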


\begin{proof}

On the one hand, there is a natural homeomorphism
$$
(X\timess I)^\infty \cong (X^\infty\times I)/({\infty}\times I),
$$
for every $X\in \E$, see \cite[\S2,3]{garcal}. It follows that the Alexandroff exterior construction preserves homotopies (regardless of whether they are taken relative to $\ray$ or not).

On the other hand, \cite[Thm.~3.1]{gargarmu3} provides a natural isomorphism
\begin{equation}\label{cohomoinf}
\calhr^*(X)\cong h^*(X^\infty,\infty),
\qquad X\in\E,
\end{equation}
for which the continuity hypothesis is necessary.
Combining these two facts, we conclude that $\calhr^*$ is homotopy invariant.

As for exactness, let $ A \hookrightarrow X$ be a cofibration in $\cofi_*$ and consider the following diagram  in $\topo^*$:
$$
\xymatrix{
A^\infty \ar[r] \ar@{^{(}->}[d]
& \br_+^\infty \ar[r] \ar[d]
& {\ast} \ar@{^{(}->}[d]
\\
X^\infty \ar[r]
& (X/A)^\infty \ar[r]
& X^\infty/A^\infty.
}
$$
where the left-hand square is the pushout obtained by applying $(-)^\infty$, which preserves colimits, see \cite[\S2,3]{garcal}, to the corresponding exterior pushout.
As the outer rectangle is clearly a pushout,  the right-hand square is a pushout as well. Thus, since the map $\br_+^\infty\to{\ast}$ is trivially a weak equivalence, left properness of $\topo^*$ implies that $(X/A)^\infty$ is naturally weakly equivalent to $X^\infty/A^\infty$. Therefore, the natural isomorphism \ref{cohomoinf}, together with the exactness of $h$ on the pair $(X^\infty,A^\infty)$, gives the required long exact sequence for $\calhr$ associated to the pair $(X,A)$.

Finally, additivity is proved in \cite[Lemma 4.2]{gargarmu3}.
\end{proof}

For the following recall Definition \ref{propercohomo}:

\begin{corollary}\label{compactsupport}
The proper cohomology theory induced by $\calhr$ is precisely $h_c$, the cohomology with compact support associated to $h$.
\end{corollary}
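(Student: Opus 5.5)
We will compare, for a proper space $X\in\PP_*$ (so $X$ carries the cocompact externology), the value $\calhr^n(X)=\varinjlim_{E\in\ext}h^n(X,E)$ with the classical compactly supported group $h^n_c(X)$.

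The first step is to rewrite the colimit over the externology. By definition the exterior sets of $X_{cc}$ are the complements $X\setminus K$ of closed compact subsets $K\subset X$. The inclusion $K\subset K'$ corresponds to $X\setminus K'\subset X\setminus K$, so the indexing system is the directed set of compact subsets. Hence
$$
\calhr^n(X)=\varinjlim_{K\ \mathrm{compact}} h^n(X,X\setminus K).
$$
This is precisely the standard definition of cohomology with compact supports associated to $h$, namely $h^n_c(X)=\varinjlim_K h^n(X,X\setminus K)$. So on objects the two coincide essentially by definition.

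Alternatively, we can argue through (\ref{cohomoinf}). For $X$ locally compact Hausdorff, which is the case for $X\in\PP_*\subset\PP_\infty$-type spaces, the Alexandroff exterior construction $X^\infty$ of $X_{cc}$ is the one-point compactification $X^+$. Then $\calhr^*(X)\cong h^*(X^+,\infty)$, which is the other classical description of $h_c^*(X)$. Continuity of $h$ is exactly what makes these two descriptions agree, as in \cite[Thm.~3.1]{gargarmu3}.

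Next we check functoriality. A morphism $f\colon X\to Y$ in $\PP_*$ is a proper map, so $f^{-1}(Y\setminus K)=X\setminus f^{-1}(K)$ with $f^{-1}(K)$ compact. Thus $f$ induces maps of pairs $(X,X\setminus f^{-1}K)\to(Y,Y\setminus K)$, and hence a map of colimits. This map agrees with the contravariant functoriality of $h_c$ along proper maps, equivalently with $h^*(f^+)$ on one-point compactifications. The connecting morphisms of the exactness sequences likewise correspond, because both come from the long exact sequence of $h$ on $(X^+,A^+)$, as in the proof of Theorem~\ref{clasicacoho}.

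The only point requiring care, and the main obstacle, is the identification $(X_{cc})^\infty=X^+$ as topological spaces. In $X^\infty$ the neighbourhoods of $\infty$ are the sets $E\cup\{\infty\}$ with $E$ open and containing some $X\setminus K$; these are exactly the open neighbourhoods of $\infty$ in the one-point compactification. This uses that compact subsets are closed, which holds because $X$ is Hausdorff.
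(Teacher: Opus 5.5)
Your first step is the paper's proof: for the cocompact externology the colimit over $\ext$ is reindexed as $\varinjlim_{K}h^n(X,X\setminus K)$ over compact $K$, which is by definition $h_c^n(X)$. The one-point-compactification route (which you call the main obstacle) and the functoriality checks are not needed for this. Also, $\PP_*=\PP\cap\cofi_*$ is not shown to lie in $\PP_\infty$, but your main step does not depend on that claim.
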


\begin{proof}
If $X$ is endowed with the cocompact externology, then
$$
\calhr^*(X)
=
\varinjlim_{E\in\ext} h^*(X,E)
=
\varinjlim_{K\,\text{compact}} h^*(X,X\setminus K),
$$
which is precisely the definition of $h_c^*(X)$.
\end{proof}

A quite general instance of the above is given by the following:

\begin{definition} \cite{expari} Let $A$ be a group and $n\ge 0$. An {\em exterior Eilenberg--MacLane} space of type $(A,n)$ is an e-connected exterior space $\calk(A,n)$ such that $\pi^e_m \calk(A,n)=A$ if $m=n$ and zero otherwise. An  {\em exterior Eilenberg--MacLane spectrum} of type $A$ is an $\Omega$-spectrum $E=\{E_n\}_{n\ge 0}$ such that each $E_n$ is an exterior Eilenberg--MacLane space of type of type $(A,n)$.
\end{definition}

\begin{theorem}\label{alex}
Let $h$ denote Alexander--Spanier cohomology with coefficients in an abelian group $G$,
and let $\calhr$ be the associated exterior cohomology theory.
Then the exterior spectrum representing $\calhr$ is, up to stable
equivalence, an exterior Eilenberg--Mac Lane spectrum
of type $G^\infty$, a countable product of copies of $G$. 
\end{theorem}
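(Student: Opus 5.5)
By Theorem~\ref{repre} there is an exterior $\Omega$-spectrum $E=\{E_n\}$ with $\calhr^n(X)\cong[X,E_n]_{\E_*}$ for every $X\in\cofi_*$. The plan is to compute the exterior homotopy groups of each $E_n$ at its distinguished discrete ray by testing this representation against the continuous exterior spheres. Then we check that $E_n$ is e-connected, and finally pass to arbitrary discrete rays.

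The first step is to observe that $\bsphc^m$ is a locally finite one-dimensional-over-the-ray cell complex. By Corollary~\ref{cw_cofibrant} (or directly, since $\bsphc^m$ is a pushout of cofibrant objects) it lies in $\cofi_*$. Since $E_n$ is fibrant, Remark~\ref{continuog} and Proposition~\ref{rectificacion} give
$$
\pi_m^e(E_n)\cong[\bsphc^m,E_n]^\ray\cong[\bsphc^m,E_n]_{\E_*}\cong\calhr^n(\bsphc^m).
$$
Hence everything reduces to computing $\calhr^n(\bsphc^m)$. By \eqref{cohomoinf} this equals $h^n\bigl((\bsphc^m)^\infty,\infty\bigr)$. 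Because $\bsphc^m$ carries the cocompact externology, $(\bsphc^m)^\infty$ is its one-point compactification. That space is the one-point compactification of $\ray$, an arc $[0,\infty]$, with spheres $S^m_k$ attached at the integers $k$ and shrinking to the point $\infty$, i.e. a compact ``Hawaiian-earring''-type space.

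The second step, which I expect to be the main obstacle, is the computation of the Alexander--Spanier cohomology of this compactum. Contract the arc, a closed contractible ANR subspace containing $\infty$; by the tautness and continuity of Alexander--Spanier cohomology on compact Hausdorff pairs, this does not change $h^*(-,\infty)$. The space then becomes the wedge $W=\bigvee_k S^m_k$ of spheres converging to the base point, i.e. the inverse limit of the finite wedges $\bigvee_{k\le N}S^m_k$. Continuity of Alexander--Spanier (equivalently \v{C}ech) cohomology for inverse limits of compacta gives
$$
\tilde h^n(W)\cong\varinjlim_N\bigoplus_{k\le N}\tilde h^n(S^m)=\bigoplus_k\tilde h^n(S^m).
$$
This is $\bigoplus_{\N}G$ if $n=m$ and $0$ otherwise. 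Care is needed here: the direct sum versus the product depends on whether one computes with compact supports or with the compactification. I would double-check this against Corollary~\ref{compactsupport}: $h_c^n(\bsphc^m)$ is the colimit over compacta $K$ of $h^n(\bsphc^m,\bsphc^m\setminus K)$, and each compact set meets only finitely many spheres, so one again obtains a countable sum. Accordingly, the precise meaning of ``$G^\infty$'' in the statement (countable sum or product) must be read off from this computation. In either case the groups vanish for $m\ne n$ and are concentrated in degree $n$.

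The third step handles $m=0$ and the remaining discrete rays. Taking $m<n$ gives $\pi_0^e(E_n)=\calhr^n(\bsphc^0)=0$, so $[\N,E_n]$ is a single point and $E_n$ is e-connected; for $n=0$ the $\Omega$-spectrum condition $E_0\simeq\Omega E_1$ together with Proposition~\ref{homolup} handles the same issue. Since every discrete ray is then homotopic to the distinguished one, Remark~\ref{homoequi}(iii) shows that the groups at other rays agree with those at the distinguished ray. Together with the uniqueness of $E$ up to stable equivalence, coming from the Yoneda argument in Theorem~\ref{repre}, this identifies $E$ as an exterior Eilenberg--Mac Lane spectrum of type $G^\infty$.
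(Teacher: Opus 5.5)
Your route is the paper's: you identify $\pi_m^e(E_n)$ with $\calhr^n(\bsphc^m)=h^n_c(\bsphc^m;G)$ via Remark~\ref{continuog}, Proposition~\ref{rectificacion} and Theorem~\ref{repre}, and then compute that group. The difference is in the value you obtain. There your computation is correct and the paper's is not: the paper's proof asserts that $h^n_c(\bsphc^n;G)$ is the countable product, but it is $\bigoplus_{\N}G$.

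To see this, take $K_N=[0,N+1/2]\cup\bigcup_{k\le N}S^m_k$. Then $h^m(\bsphc^m,\bsphc^m\setminus K_N;G)\cong G^{N+1}$. Inside $h^m(\bsphc^m;G)\cong\prod_k G$ this is the group of sequences supported on $\{0,\dots,N\}$, so the colimit is the group of finitely supported sequences. Alternatively, $h^*_c(\ray)=0$, and the long exact sequence of the closed subspace $\ray\subset\bsphc^m$ gives $h^*_c(\bsphc^m)\cong h^*_c(\bigsqcup_k\br^m)$, which is $\bigoplus_k G$ in degree $m$. The product is the ordinary cohomology $h^m(\bsphc^m;G)$, which is presumably where the paper's claim comes from.

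So you should not hedge. Your remark that sum versus product ``depends on whether one computes with compact supports or with the compactification'' is mistaken. By \eqref{cohomoinf} and Corollary~\ref{compactsupport} these are the same group, and both of your computations correctly give the sum. What your argument proves is the theorem with $G^\infty$ meaning $\bigoplus_{\N}G$. You should also say explicitly that the product reading is false: for $G=\Z$, or any nonzero countable $G$, the group $\pi^e_n(E_n)\cong\calhr^n(\bsphc^n)$ is countable, whereas $\prod_{\N}G$ is not.

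Your e-connectedness argument for $n\ge1$, via $\calhr^n(\bsphc^0)=0$ (note $\bsphc^0\cong\ray\sqcup\N$), is correct and fills in a point the paper leaves implicit. At level $0$, however, the $\Omega$-spectrum condition gives $\pi^e_0(E_0)\cong\pi^e_1(E_1)\cong\bigoplus_{\N}G\neq0$, so $E_0$ is not e-connected. The statement must therefore be read for $\{E_n\}_{n\ge1}$, as in the paper, or with type $(A,0)$ meaning $\pi^e_0=A$.
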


\begin{proof}
Let
$
E=\{E_n\}_{n\geq 1}
$
be the exterior $\Omega$-spectrum representing $\calhr$ provided by Theorem \ref{repre}. In view of Remark \ref{continuog}, for any $n\ge 1$ and $m\ge 0$ we  have 
$$
\pi_m^e(E_n)
\cong[\bsphc^m,E_n]^\ray
\cong
\calhr^n(\bsphc^m)
$$
which, by construction, agrees with $h^n_c(\bsphc^m;G)$, the compactly
supported Alexander--Spanier  $n$th cohomology of $\bsphc^m$, or compactly supported singular cohomology as $\bsphc^m$ is HLC, with coefficients in $G$. Hence
$$
\pi_m^e(E_n)
\cong
h_c^n(\bsphc^m;G)
\cong
\begin{cases}
G^\infty, & m=n,\\
0, & m\neq n.
\end{cases}
$$
Therefore each $E_n$ is an exterior Eilenberg--Mac Lane space of type
$G^\infty$. Since $E$ is an exterior $\Omega$-spectrum, its adjoint
structure maps are exterior weak equivalences, and thus $E$ is an
exterior Eilenberg--Mac Lane spectrum of type $G^\infty$.
\end{proof}

Observe, furthermore, that each $E_n$ has only one ray  up to exterior homotopy, that is, one {\em exterior strong end}. Indeed, by representability,
$$
[\ray,E_n]\cong \calhr^n(\ray)=0,
$$
as every reduced exterior cohomology
theory vanishes on the zero object.

\bigskip\bigskip\bigskip\bigskip

\noindent{\sc Departamento de \'Algebra, Geometr\'{\i}a y Topolog\'{\i}a,
Facultad de Ciencias,
Universidad de M\'alaga,
Blvr. Louis Pasteur 31,
29010 M\'alaga, Spain.}

\noindent
\texttt{aceres@uma.es}

\noindent
\texttt{aniceto@uma.es}

\bigskip

\noindent{\sc Departamento de Matem\'aticas, Estad{\'\i}stica e Investigaci\'on Operativa,
Universidad de La Laguna,
Av.~Astrof{\'\i}sico Francisco S\'anchez S/N,
38206 La Laguna, Spain.}

\noindent
\texttt{jmgarcal@ull.edu.es}

\end{document}